\documentclass{siamltex1213}

\usepackage{amsmath,amssymb,amsfonts,cmmib57,mathdots,xcolor,mathrsfs}
\usepackage[latin1]{inputenc}
\usepackage{enumerate}

\newcommand{\rev}{\ensuremath\mathrm{rev}}

\newcommand{\FF}{{\mathbb F}}

\DeclareMathOperator{\vect}{vec}

\newtheorem{remark}[theorem]{Remark}
\newtheorem{example}[theorem]{Example}




\title{Structured backward error analysis of linearized structured polynomial eigenvalue problems}
\author{Froil\'an M. Dopico\footnotemark[1], Javier P\'{e}rez\footnotemark[2], and Paul Van Dooren \footnotemark[3]}
\begin{document}
\maketitle
\slugger{simax}{xxxx}{xx}{x}{x--x}
\renewcommand{\thefootnote}{\fnsymbol{footnote}}
\footnotetext[1]{Departamento de Matem\'{a}ticas, Universidad Carlos III de Madrid,  Avenida de la Universidad 30, 28911, Legan\'{e}s, Spain. Email: {\tt dopico@math.uc3m.es}.
Supported by ``Ministerio de Econom\'{i}a, Industria y Competitividad of Spain" and ``Fondo Europeo de Desarrollo Regional (FEDER) of EU" through grants MTM-2015-68805-REDT, MTM-2015-65798-P (MINECO/FEDER, UE).}
\footnotetext[2]{Department of Computer Science, KU Leuven, Celestijnenlaan 200A, 3001 Heverlee, Belgium. Email: {\tt javierperez@kuleuven.be}. Partially supported by KU Leuven Research Council grant OT/14/074 and the Interuniversity Attraction Pole DYSCO, initiated by the Belgian State Science Policy Office.}
\footnotetext[3]{Department of Mathematical Engineering, Universit\'e catholique de Louvain, Avenue Georges
Lema\^itre 4, B-1348 Louvain-la-Neuve, Belgium. Email: {\tt paul.vandooren@uclouvain.be}. Supported by the Belgian network DISCO (Dynamical Systems, Control, and Optimization), funded by the Interuniversity Attraction Poles Programme initiated by the Belgian Science Policy Office.}

\renewcommand{\thefootnote}{\arabic{footnote}}

\begin{abstract}
We start by introducing a new class of structured matrix polynomials, namely, the class of $\mathbf{M}_A$-structured matrix polynomials, to provide a common framework for many classes of structured matrix polynomials that are important in applications: the classes of (skew-)symmetric, (anti-)palindromic, and alternating matrix polynomials.
Then, we introduce the families of $\mathbf{M}_A$-structured strong block minimal bases pencils and of $\mathbf{M}_A$-structured block Kronecker pencils, which are particular examples of block minimal bases pencils recently introduced by Dopico, Lawrence, P\'erez and Van Dooren, and show that any $\mathbf{M}_A$-structured odd-degree matrix polynomial can be strongly linearized via an $\mathbf{M}_A$-structured block Kronecker pencil. 
Finally, for the  classes of \break (skew-)symmetric, (anti-)palindromic, and alternating odd-degree matrix polynomials, the $\mathbf{M}_A$-structured framework allows us to perform a global and structured backward stability analysis of complete structured polynomial eigenproblems, regular or singular, solved by applying to a $\mathbf{M}_A$-structured block Kronecker pencil a structurally backward stable algorithm that computes its complete eigenstructure, like  the palindromic-QR algorithm or the structured  versions of the staircase algorithm.
This analysis allows us to identify those $\mathbf{M}_A$-structured block Kronecker pencils that yield a computed complete eigenstructure which is the exact one of a slightly perturbed structured matrix polynomial.
These pencils include (modulo permutations) the well-known block-tridiagonal and block-antitridiagonal structure-preserving linearizations. 
Our analysis incorporates structure to the recent (unstructured) backward error analysis performed for block Kronecker linearizations by Dopico, Lawrence, P\'erez and Van Dooren, and share with it its key features, namely, it is a rigorous analysis valid for finite perturbations, i.e., it is not a first order analysis, it provides precise bounds, and it is valid simultaneously for a
large class of structure-preserving strong linearizations. 
\end{abstract}
\begin{keywords}
structured backward error analysis, complete polynomial eigenproblems, structured matrix polynomials, structure-preserving linearizations, M\"obius transformations, matrix perturbation theory, dual minimal bases
\end{keywords}
\begin{AMS}
 65F15, 15A18, 14A21, 15A22, 15A54, 93B18
\end{AMS}

\pagestyle{myheadings}
\thispagestyle{plain}

\section{Introduction}\label{sec:intro}

Matrix polynomials with special algebraic structures occur in numerous applications  in engineering, mechanics, control, linear systems theory, and computer-aided graphic design.
Some of the most common of these algebraic structures that appear in applications are the (skew-)symmetric \cite{symmetric,Skew}, (anti-)palindromic \cite{GoodVibrations,Palindromic}, and alternating structures \cite{GoodVibrations,Alternating}.
Palindromic matrix polynomials appear, to name a few applications, in the study of  resonance phenomena of  rail tracks under high frequency excitation forces \cite{Jacobi,rail1,rail2}, in the numerical simulation of the behavior of periodic surface acoustic wave filters \cite{SAW1,SAW2}, in passivity tests of a linear dynamical system \cite{passivity}, and in discrete-time linear-quadratic optimal control problems \cite{optimal_control}.
Symmetric (or Hermitian) matrix polynomials arise in the classical problem of vibration analysis \cite{Hermitian1,Lancaster_book,Lancaster_book2,quadratic}, and alternating matrix polynomials find applications, for instance, in the study of corner singularities in anisotropic elastic materials \cite{corner1,corner2,corner3}, in the study of gyroscopic systems \cite{giro1,Lancaster_book2,Lancaster_paper}, and in continuous-time linear-quadratic optimal control problems \cite{optimal_control}.
Further details of different applications of (structured and unstructured) matrix polynomials can be found in  the classical references \cite{Lancaster_book,Kailath,Rosenbrock}, the modern surveys \cite[Chapter 12]{Volker_book} and \cite{NEV,quadratic}, and the references therein, and in the reference \cite{GoodVibrations}.

Structured matrix polynomials present rich symmetries in their spectra, which are discussed in detail, for example, in \cite{spectral_equivalence,Alternating,Palindromic,Skew}.
Since the algebraic structures of matrix polynomials stem usually from the physical symmetries underlying problems arising from applications, these spectral symmetries reflect specific physical properties, and it is very desirable to preserve them in computed solutions.
However, general unstructured polynomial eigensolvers may destroy these spectral symmetries due to rounding errors.
As a consequence, the development and investigation of polynomial eigensolvers that are able to exploit and preserve the structure that the matrix polynomials might possess, have been the focus of an intense research during the last decade (see, for example \cite[Chapters 1, 2, 3, and 12]{Volker_book}, and the references therein).

Square regular matrix polynomials are usually related to {\em polynomial eigenvalue problems} (PEPs), while singular matrix polynomials are related to {\em complete polynomial eigenvalue problems} (CPEs), since in the singular case the so called minimal indices
have to be considered in addition to the eigenvalues.
When the spectral symmetries of structured matrix polynomials are taken into account (i.e., they have to be preserved in the computed solution), those problems receive the names of {\em structured polynomial eigenvalue problems} (SPEPs) and {\em structured complete polynomial eigenvalue problems} (SCPEs), respectively.
The standard approach to solve a PEP or a CPE (or a SPEP or a SCPE) associated with a matrix polynomial $P(\lambda)$ is to linearize $P(\lambda)$ into a matrix pencil (i.e., a matrix polynomial of degree 1).
Linearization transforms the original polynomial eigenvalue problem into an equivalent generalized eigenvalue problem, which can be solved by using mature and well-understood generalized eigensolvers such as the QZ algorithm  or the staircase algorithm \cite{QZ,staircase,VanDooren83} or their structured counterparts \cite{skew-staircase,Implicit_palQR,MMMM_pal,Schroder_thesis,palQR}.
For this reason, one of the preferred approaches to develop structured numerical methods for solving SPEPs and SCPEs associated with structured matrix polynomials starts by devising structure-preserving linearizations \cite{Greeks2,FPR1,FiedlerHermitian,FPR2,FPR3,VectorSpaces,
PalindromicFiedler,PartI,PartII,PartIII,symmetric,definite,
ChebyshevPencils,GoodVibrations,ChebyFiedler,Leo2016}.

The theory of linearizing structured matrix polynomials in a structure preserving way is already well-understood \cite{spectral_equivalence,Alternating,Palindromic,Skew}.
It is well-known that any odd-degree structured matrix polynomial in the classes listed in the first paragraph of this section can be linearized in a structure-preserving way, regardless of whether the matrix polynomial is regular or singular.
However, some even-degree structured matrix polynomials in these classes do not have any linearization with the same structure due to some spectral subtle obstructions \cite[Section 7.2]{spectral_equivalence}.
This phenomenon suggests that for even-degree structured matrix polynomials linearizations should sometimes be replaced by other low degree matrix polynomials in numerical computations \cite{pal_quadratification}.
Due to this even-degree/odd-degree dichotomy for the existence of classes of structure-preserving linearizations, we only consider in this work numerical methods based on structure-preserving linearizations for solving SPEPs or SCPEs associated with odd-degree matrix polynomials.

One interesting recent advance in the theory of linearizations of matrix polynomials has been the introduction of the family of (strong) block minimal bases pencils \cite{minimal_pencils}, since many of the linearizations that have appeared previously in the literature are included in this family of pencils \cite{Maribel} and, in addition, allow a simple, concise, and unified theory \cite{minimal_pencils}. 
A particular but very important subfamily of strong block minimal bases pencils is the family of block Kronecker pencils \cite{minimal_pencils}.
Block Kronecker pencils include (modulo permutations) all Fiedler linearizations \cite{Fiedler_pencils,minimal_pencils}, but infinitely many more linearizations are also included in this family.
All the linearizations belonging to the family of block Kronecker pencils have the following properties that are very desirable in numerical applications:
\begin{itemize}
\item[\rm (i)] they are strong linearizations, regardless whether the matrix polynomial is regular or singular;
\item[\rm (ii)] they are easily constructible from the coefficients of the matrix polynomials;
\item[\rm (iii)] eigenvector of regular matrix polynomials are easily recovered from those of the linearizations;
\item[\rm (iv)] minimal bases of singular matrix polynomials are easily recovered from those of the linearizations;
\item[\rm (v)] there exists a simple shift relation between the minimal indices of singular matrix polynomials and the minimal indices of the linearizations, and such relation is robust under perturbations;
\item[\rm (vi)] they guarantee global backward stability of polynomial eigenvalue problems solved via block Kronecker linearizations.
\end{itemize}
Additionally, block Kronecker pencils have been generalized to allow one to construct strong linearizations for matrix polynomials that are expressed in some non-monomial polynomial bases \cite{ChebyshevPencils,Leo2016}.

Another key advantage of the family of strong block minimal bases pencils is that one can find easily in it structure-preserving strong linearizations for odd-degree structured matrix polynomials in relevant structured classes \cite[Section 5]{Leo2016}.
This observation has led to the introduction of the family of structured block Kronecker pencils \cite{PartI,PartII,PartIII}. 
Linearizations based on structured block Kronecker pencils share with  block Kronecker linearizations  properties (i)--(vi), listed above, together with the property that they preserve a number of important structures that an odd-degree matrix polynomial might possess.

Once a structured matrix polynomial $P(\lambda)$ is linearized  via a structure-preserving strong linearization $\mathcal{L}(\lambda)$, a structured method  (i.e., a method preserving the spectral symmetries of the spectrum of the polynomial) can be applied on the pencil $\mathcal{L}(\lambda)$ to solve the SPEP or SCPE associated with $P(\lambda)$. 
There are many available structure-preserving methods for computing the eigenstructure of certain structured matrix pencils.
For example, for regular palindromic or anti-palindromic matrix pencils we have a URV-like method \cite{URV}, a Jacobi-like method \cite{Jacobi}, the palindromic-QR algorithm \cite{Implicit_palQR,palQR}, doubling methods \cite{doubling},  or the QZ algorithm with the Laub trick \cite{MMMM_pal}.
For singular palindromic or anti-palindromic matrix pencils there is a structured version of the GUPTRI algorithm (a structured  staircase form), that deflates the singular part of palindromic pencils \cite{Schroder_thesis}.
All these methods can also be applied to alternating matrix pencils as well, since any alternating pencil can be transformed into a palindromic or anti-palindromic pencil via a Cayley transformation \cite{GoodVibrations}.
Methods for other structures can be found in \cite{Jacobi_Hermitian,Krylov_symmetric}, for example.

Some of the structured methods for structured pencils mentioned in the paragraph above are \emph{structurally global backward stable}\footnote{Structurally global backward stable algorithms are called strongly backward stable algorithms in \cite{Implicit_palQR,Schroder_thesis}.} \cite{Implicit_palQR,Schroder_thesis}, and others behave in practice in a structurally global backward stable way. 
This means that if the complete eigenstructure of a structured matrix polynomial is computed as the complete eigenstructure of a structure-preserving linearization of the matrix polynomial, then the computed complete eigenstructure is the exact one of a nearby matrix pencil with the same structure as the given matrix polynomial.
However, it has been an open problem to determine whether or not these methods compute the exact complete eigenstructure of a structured nearby matrix polynomial.
We only know one reference where this problem is addressed in the case of skew-symmetric matrix polynomials \cite{Skew-symmetric_Andrii}.
Nonetheless, the analysis in \cite{Skew-symmetric_Andrii} is only valid for infinitesimal perturbations and it does not provide precise bounds.
Only precise ``local'' structured backward error analyses valid for
each particular computed eigenvalue or eigenpair have been developed so far.
See, for example, \cite{Ahmad2011,Bora2014,Bora_her,Bora_pal}, or \cite{Adhikari,Rafikul2011} for the case of the structured linearizations in the vector spaces $\mathbb{L}_1(P)$, $\mathbb{L}_2(P)$ and $\mathbb{DL}(P)$, introduced in \cite{MMMM_vector_space,GoodVibrations} and \cite{symmetric}.

The main goal of this work is to perform for the first time a rigorous structured global backward error analysis of SPEPs or SCPEs associated with odd-degree structured matrix polynomials  of certain important classes solved by applying a structured algorithm to a structured block Kronecker linearization.
The backward error analysis that we present here takes its inspiration from the (unstructured) global backward error analysis of PEPs and CPEs solved via block Kronecker linearizations  performed in \cite[Section 6]{minimal_pencils}.
As a consequence, our error analysis shares with the analysis in  \cite[Section 6]{minimal_pencils} its novel properties with respect to previous global backward error analyses: (1) it is valid for perturbations with finite norms, (2) it delivers precise bounds, (3) it is valid simultaneously for a very large class of structure-preserving linearizations.
As a corollary of our results, we solve the open problem of proving that the famous block-tridiagonal and block-antitridiagonal structure preserving strong linearizations presented in \cite{Greeks2,PalindromicFiedler,Alternating,Palindromic,Skew} yield computed complete eigenstructures of structured matrix polynomials that enjoy perfect structured backward stability
from the polynomial point of view.

The rest of the paper is organized as follows.
In Section \ref{sec:basics}, we review some basic concepts and results, and summarize the notation used through the paper.
In Section \ref{sec:Mobius}, we recall M\"obius transformations of matrix polynomials and their relation with structured matrix polynomials.
The concept of  $\mathbf{M}_A$-structured matrix polynomial is also introduced in this section with the aim of providing a common framework for the classes of (skew-)symmetric, (anti-)palindromic and alternating matrix polynomials of odd degree.
In Section \ref{sec:minimal_bases_pencils}, we recall the family of (strong) block minimal bases pencils, and state some of its most important properties.
We also introduce the new family of $\mathbf{M}_A$-structured block minimal bases pencils, which is a subfamily of strong block minimal bases pencils, and use this family to show that any odd-degree $\mathbf{M}_A$-structured matrix polynomial can be strongly linearized in a structure-preserving way.
In Section \ref{sec:classical_structures}, we introduce the family of $\mathbf{M}_A$-structured block Kronecker pencils, review the family of structured block Kronecker pencils, and review how structure-preserving strong linearizations for odd-degree (skew-)symmetric, (anti-)palindromic or alternating matrix polynomials can be easily constructed from structured block Kronecker pencils.
Finally, in Section \ref{sec:analysis}, we perform a rigorous structured and global backward error analysis of SPEPs or SCPEs solved by means of structured block Kronecker pencils.
Our conclusions are presented in Section \ref{sec:conclusions}.

\section{Basic concepts, auxiliary results and notation}\label{sec:basics}

Throughout the paper we use the following notation.
By $\mathbb{F}$ we denote either the field of complex numbers $\mathbb{C}$ or the field of real numbers $\mathbb{R}$.
We also consider the involution $a\rightarrow \overline{a}$, that is, the identity map when $\mathbb{F}=\mathbb{R}$, or, when $\mathbb{F}=\mathbb{C}$,  the bijection that maps any complex number to its complex conjugate.
 By $\mathbb{F}(\lambda)$ and $\mathbb{F}[\lambda]$ we denote, respectively, the field of rational functions and the ring of polynomials with coefficients in $\mathbb{F}$.
The set of $m\times n$ matrices with entries in $\mathbb{F}[\lambda]$ is denoted by $\mathbb{F}[\lambda]^{m\times n}$. 
Usually, we refer to this set as the set of $m\times n$ \emph{matrix polynomials}, and any $P(\lambda)\in \mathbb{F}[\lambda]^{m\times n}$ is called an $m\times n$ matrix polynomial.
Row and column \emph{vector polynomials} refer to matrix polynomials with $m=1$ or $n=1$, respectively.
The set of $m\times n$ matrices with entries in $\mathbb{F}(\lambda)$ is denoted by $\mathbb{F}(\lambda)^{m\times n}$. 
The algebraic closure of the field $\mathbb{F}$ is denoted by $\overline{\mathbb{F}}$.

A matrix polynomial $P(\lambda)\in \mathbb{F}[\lambda]^{m\times n}$ is said to have \emph{grade} $g$ if it is written as
\begin{equation}\label{eq:poly}
P(\lambda) = P_g\lambda^g + P_{g-1}\lambda^{g-1} + \cdots + P_1\lambda + P_0, \quad \mbox{with }P_g,\hdots,P_0\in\mathbb{F}^{m\times n},
\end{equation}
where any of the coefficient matrices $P_i$, including the leading coefficient $P_g$, may be the zero matrix. 
The \emph{degree} of the matrix polynomial \eqref{eq:poly} is denoted by $\deg(P(\lambda))$, and it refers to the maximum integer $d$ such that $P_d$ is a nonzero matrix.
Notice that a polynomial of degree $d$ can be considered as a polynomial of grade $g\geq d$. 
In this work, when the grade of a polynomial is not explicitly stated, we consider its grade as the degree of the polynomial.

For any $g\geq \deg(P(\lambda))$, the \emph{$g$-reversal matrix polynomial} of $P(\lambda)$ is the matrix polynomial
\[
\rev_g P(\lambda):= \lambda^g P(\lambda^{-1}).
\]
Notice that the $g$-reversal operation maps matrix polynomials of grade $g$ to matrix polynomials with the same grade. 
However, the degree of $\rev_g P(\lambda)$ may be different to the degree of $P(\lambda)$.

The \emph{normal rank} of a matrix polynomial $P(\lambda)\in\mathbb{F}[\lambda]^{m\times n}$ is defined as the rank of $P(\lambda)$ over the field $\mathbb{F}(\lambda)$, and it is denoted by $\rank (P)$.
In other words, the normal rank of $P(\lambda)$ is the size of the largest non-identical zero minor of $P(\lambda)$ (see \cite{Gantmacher}, for example).
By $\rank(P(\lambda_0))$ we refer to the rank of the constant matrix $P(\lambda_0)$ obtained by evaluating the matrix polynomial $P(\lambda)$ at $\lambda_0$.
We say that $P(\lambda_0)$ has full row (resp. column) rank if $\rank (P(\lambda_0))=m$ (resp. $\rank (P(\lambda_0))=n$). 

The operator $(\cdot)^\star$ denotes either the transpose when $\mathbb{F}=\mathbb{R}$ or the conjugate transpose when $\mathbb{F}=\mathbb{C}$. 
Given a matrix polynomial $P(\lambda)$ as in \eqref{eq:poly}, the matrix polynomials $P(\lambda)^\star$ and $\overline{P}(\lambda)$ are defined as $P(\lambda)^\star := P_g^\star\lambda^g+\cdots + P_1^\star \lambda + P_0^\star$ and $\overline{P}(\lambda) = \overline{P}_g \lambda^g+\cdots+\overline{P}_1\lambda+\overline{P_0}$, respectively, where the conjugate of a matrix should be understood entrywise.

We focus in this work mainly on square matrix polynomials (that is, $m=n$) with one of the following algebraic structures:

\medskip

\begin{itemize}
\item[\rm (i)] \emph{$\star$-symmetric}: $P(\lambda)^\star = P(\lambda)$,
\item[\rm (ii)] \emph{$\star$-skew-symmetric}: $P(\lambda)^\star =-P(\lambda)$,
\item[\rm (iii)] \emph{$\star$-palindromic}: $P(\lambda)^\star = \rev_g P(\lambda)$,
\item[\rm (iv)] \emph{$\star$-anti-palindromic}: $P(\lambda)^\star = -\rev_g P(\lambda)$,
\item[\rm (v)] \emph{$\star$-even}: $P(\lambda)^\star = P(-\lambda)$,
\item[\rm (vi)]  \emph{$\star$-odd}: $P(\lambda)^\star = -P(-\lambda)$,
\end{itemize}

\medskip

\noindent where $g$ denotes the grade of $P(\lambda)$.
 A matrix polynomial is said to be \emph{$\star$-alternating} if it is either $\star$-even or $\star$-odd.
 When $\mathbb{F}=\mathbb{C}$, a $\star$-(skew-)symmetric matrix polynomial is usually called a (skew-)Hermitian matrix polynomial \cite{Hermitian}. 
 However, we do not employ that terminology  in this paper.
Also, most of the times we drop the ``$\star$-'' in the notation, and just say (skew-)symmetric, (anti-)palindromic or alternating matrix polynomials.
 Additionally, we denote  by $\mathscr{S}(P)\in\{$symmetric, skew-symmetric, palindromic, anti-palindromic, even, odd$\}$ the structure that the structured matrix polynomial $P(\lambda)$  posses.
 
An important distinction in the theory of matrix polynomials is between regular and singular matrix polynomials. 
A matrix polynomial $P(\lambda)$ is said to be \emph{regular} if it is square and the scalar polynomial $\det P(\lambda)$ is not identically equal to the zero polynomial. 
Otherwise, the matrix polynomial $P(\lambda)$ is said to be \emph{singular}.
The \emph{complete eigenstructure} of a regular matrix polynomial consists of its elementary divisors (spectral structure), both finite and infinite, while for a singular matrix polynomial it consists of its elementary divisors together with its right and left minimal indices (spectral structure+singular structure).
The singular structure of matrix polynomials will be briefly reviewed later in the paper.
For more detailed definitions of the spectral  structure of matrix polynomials, we refer the reader to \cite[Section 2]{spectral_equivalence}.

An important feature of  structured matrix polynomials  are the special symmetry properties of their spectral \cite{symmetric,GoodVibrations,Alternating,Palindromic,Skew} and singular structures \cite{singular}.
As we mentioned in the introduction, the problem of computing the complete eigenstructure of a structured matrix polynomial using an algorithm that preserves its spectral and singular structure symmetries in the computed solution is called in this work the \emph{structured polynomial eigenvalue problem (SPEP)}, for regular matrix polynomials, or the \emph{structured complete polynomial eigenvalue problem (SCPE)}, for singular matrix polynomials.

Minimal bases and minimal indices play a relevant role in this work, so they are reviewed in the following.
When a matrix polynomial $P(\lambda)\in\mathbb{F}[\lambda]^{m\times n}$ is singular, it has nontrivial left and/or right \emph{rational null spaces}
\begin{equation} \label{eq:nullspaces}
\begin{split}
\mathcal{N}_\ell(P) & := \{y(\lambda)^T\in\mathbb{F}(\lambda)^{1\times m} \quad \mbox{such that} \quad y(\lambda)^TP(\lambda) = 0\},\\
\mathcal{N}_r(P) & := \{x(\lambda)\in\mathbb{F}(\lambda)^{n \times 1} \quad \mbox{such that} \quad P(\lambda)x(\lambda) = 0\}.
\end{split}
\end{equation}
These two spaces are particular instances of a \emph{rational} subspace \cite{Forney}.
Any rational subspace  $\mathcal{V}$ has always bases consisting entirely of vector polynomials.
The \emph{order} of a vector polynomial basis of $\mathcal{V}$ is defined as the sum of the degrees of its vectors \cite[Definition 2]{Forney}.
The \emph{minimal bases} of $\mathcal{V}$ are those polynomial bases of $\mathcal{V}$ with least order \cite[Definition 3]{Forney}. 
Although minimal bases are not unique,  the ordered list of degrees of the vector polynomials in any minimal basis of $\mathcal{V}$ is always the same \cite[Remark 4, p. 497]{Forney}. 
This list of degrees is called the list of {\em minimal indices} of $\mathcal{V}$.
Then, the \emph{left (resp. right) minimal indices and bases of a matrix polynomial} $P(\lambda)$ are defined as those of the rational subspace $\mathcal{N}_\ell(P)$ (resp. $\mathcal{N}_r(P)$).

To work in practice with minimal bases the following definition will be useful, where by the \emph{$i$th row degree} of a matrix polynomial $Q(\lambda)$ we denote the degree of the $i$th row of $Q(\lambda)$.
\begin{definition}{\rm \cite[Definition 2.3]{zigzag}} \label{def:rowreduced}
Let $Q(\lambda)\in\mathbb{F}[\lambda]^{m\times n}$ be a matrix polynomial with row degrees $d_1,d_2,\hdots,d_m$. 
The {\em highest row degree coefficient matrix} of $Q(\lambda)$, denoted by $Q_h$, is the $m \times n$ constant matrix whose $j$th row is the coefficient of $\lambda^{d_j}$ in the $j$th row of $Q(\lambda)$, for $j=1,2,\hdots,m$. 
The matrix polynomial $Q(\lambda)$ is called {\em row reduced} if $Q_h$ has full row rank.
\end{definition}

Theorem \ref{thm:minimal_basis} is a useful characterization of minimal bases.
This theorem can be found in, for example,  \cite[Main Theorem-Part 2, p. 495]{Forney}.
However, for convenience, we present here the version in less abstract terms  in  \cite[Theorem 2.14]{FFP2015}.
\begin{theorem}
\label{thm:minimal_basis}
The rows of a matrix polynomial $Q(\lambda)\in\mathbb{F}[\lambda]^{m\times n}$ are a minimal basis of the rational subspace they span if and only if $Q(\lambda_0) \in \overline{\mathbb{F}}^{m \times n}$ has full row rank for all $\lambda_0 \in \overline{\mathbb{F}}$ and $Q(\lambda)$ is row reduced.
\end{theorem}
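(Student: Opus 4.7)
The plan is to prove both directions by quantifying how the two stated conditions obstruct any reduction of the order $\sigma(Q) := d_1 + \cdots + d_m$ of $Q(\lambda)$, where $d_1,\ldots,d_m$ are its row degrees. This connects minimality to two independent phenomena: behavior at infinity, controlled by row reducedness, and behavior at finite points, controlled by the pointwise full row rank condition.

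For the forward direction, assume the rows of $Q(\lambda)$ form a minimal basis. If $Q_h$ were not of full row rank, there would exist $c\in\mathbb{F}^{1\times m}\setminus\{0\}$ with $c\,Q_h=0$; picking a nonzero entry $c_j$ of $c$ with $d_j$ maximal among the support of $c$, the vector polynomial $c\,Q(\lambda)$ has degree strictly less than $d_j$, and replacing the $j$th row of $Q(\lambda)$ by $c\,Q(\lambda)$ yields another polynomial basis of the same subspace of strictly smaller order, contradicting minimality. Similarly, if $Q(\lambda_0)$ lost row rank at some $\lambda_0\in\overline{\mathbb{F}}$, there would be a nonzero $c\in\overline{\mathbb{F}}^{1\times m}$ with $c\,Q(\lambda_0)=0$, so that $c\,Q(\lambda)=(\lambda-\lambda_0)R(\lambda)$ for a vector polynomial $R(\lambda)\in\overline{\mathbb{F}}[\lambda]^{1\times n}$. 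A descent on the multiplicity of $\lambda_0$ in $c\,Q(\lambda)$, combined when $\lambda_0\notin\mathbb{F}$ with an averaging over the Galois conjugates of $c$ so that the resulting linear combination has coefficients in $\mathbb{F}$, produces a row replacement of strictly smaller degree, again contradicting minimality.

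For the converse, assume $Q(\lambda_0)$ has full row rank for every $\lambda_0\in\overline{\mathbb{F}}$ and $Q(\lambda)$ is row reduced. Full rank at any single point shows the rows of $Q(\lambda)$ are $\mathbb{F}(\lambda)$-linearly independent, so they form a polynomial basis of the rational subspace $\mathcal{V}$ they span. Any other polynomial basis $\widetilde{Q}(\lambda)$ of $\mathcal{V}$ satisfies $\widetilde{Q}=U\,Q$ for a unique $U(\lambda)\in\mathbb{F}(\lambda)^{m\times m}$. The pointwise full-rank hypothesis implies that $Q(\lambda)$ has only trivial Smith invariants, hence admits a polynomial right inverse; this forces $U(\lambda)$ to be itself polynomial, as it has no finite poles. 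Finally, row reducedness of $Q(\lambda)$ together with the polynomial character of $U(\lambda)$ yields $\sigma(\widetilde{Q})\geq\sigma(Q)$: writing the highest row degree coefficients of $\widetilde{Q}=U\,Q$ row by row, the full row rank of $Q_h$ prevents any cancellation of the leading terms, so the row degrees of $\widetilde{Q}$ cannot be collectively smaller than those of $Q$.

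The main obstacle is the reverse direction. Two subtleties require care. First, deducing from full row rank at every $\lambda_0\in\overline{\mathbb{F}}$ that $Q(\lambda)$ has a polynomial right inverse, or equivalently that its Smith form is $\bigl[\,I_m\;\;0\,\bigr]$, which then guarantees that the change-of-basis matrix $U(\lambda)$ is polynomial; this step relies crucially on the fact that the hypothesis is imposed over the algebraic closure $\overline{\mathbb{F}}$, so that no Galois-conjugate root of an invariant factor is overlooked. Second, deriving the inequality $\sigma(\widetilde{Q})\geq\sigma(Q)$ from row reducedness in the presence of an arbitrary polynomial left factor $U(\lambda)$, which is the Predecessor Theorem of Forney and requires a careful accounting of how the leading row-degree coefficients of $U(\lambda)$ propagate through multiplication by $Q(\lambda)$.
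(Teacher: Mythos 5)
The paper does not prove Theorem \ref{thm:minimal_basis}; it cites it to Forney's 1975 paper and to De Ter\'an--Dopico--Van Dooren \cite{FFP2015}, so there is no paper proof to compare against and I evaluate your argument on its own terms. Your converse direction is the standard argument and is sound: pointwise full row rank over $\overline{\mathbb{F}}$ forces the Smith form of $Q$ to be $[\,I_m\ \ 0\,]$, hence a polynomial right inverse $Q^+$ exists, so the transition matrix $U=\widetilde{Q}\,Q^+$ to any other polynomial basis is polynomial; since $\det U\neq 0$ there is a permutation $\pi$ with $u_{i,\pi(i)}\neq 0$ for all $i$, and the predictable degree property of the row-reduced $Q$ gives $\deg\widetilde{q}_i=\max_j(\deg u_{ij}+d_j)\geq d_{\pi(i)}$, so $\sigma(\widetilde{Q})\geq\sigma(Q)$. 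You are also right that the polynomiality of $U$ and the predictable degree property (not the ``Predecessor Theorem'') are the two load-bearing ingredients.

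The forward direction, however, contains two genuine gaps, and both come from using a constant left multiplier where a polynomial one is required. For the row-reducedness half: from $cQ_h=0$ it does \emph{not} follow that $\deg(cQ(\lambda))<d_j$ when the support of $c$ contains rows of unequal degree. For instance, if $Q$ has two rows of degrees $3$ and $1$ with the same nonzero leading row coefficient, then $c=(1,-1)$ kills $Q_h$ but $cQ(\lambda)$ still has degree $3$. What actually cancels the top coefficient is the polynomial multiplier $c(\lambda)$ whose $i$th entry is $c_i\lambda^{d_j-d_i}$ for $i$ in the support (and $0$ otherwise): the coefficient of $\lambda^{d_j}$ in $c(\lambda)Q(\lambda)$ is exactly $cQ_h=0$, so replacing row $j$ (after normalizing $c_j=1$) by $c(\lambda)Q(\lambda)$ strictly reduces the order. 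For the full-rank half with $\lambda_0\notin\mathbb{F}$: averaging the Galois conjugates of $c$ yields a constant vector with entries in $\mathbb{F}$, but that averaged vector does \emph{not} annihilate $Q(\lambda_0)$ (each conjugate of $c$ kills $Q$ at the corresponding conjugate of $\lambda_0$, not at $\lambda_0$), and no divisibility follows. The correct device is once more a polynomial multiplier: let $p(\lambda)\in\mathbb{F}[\lambda]$ be the minimal polynomial of $\lambda_0$; since $Q$ has coefficients in $\mathbb{F}$, rank deficiency at $\lambda_0$ forces $p$ to divide every $m\times m$ minor, so $Q$ reduced modulo $p$ is rank-deficient over the field $\mathbb{F}[\lambda]/(p(\lambda))$. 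Lifting a nonzero left null vector there to $c(\lambda)\in\mathbb{F}[\lambda]^{1\times m}$ with each $\deg c_i<\deg p$ gives $c(\lambda)Q(\lambda)=p(\lambda)R(\lambda)$ with $R\in\mathbb{F}[\lambda]^{1\times n}$, $R\neq 0$, and $\deg R\leq(\deg p-1)+\max_{i\in\mathrm{supp}\,c}d_i-\deg p<\max_{i\in\mathrm{supp}\,c}d_i$; replacing the supported row of maximal degree by $R$ again strictly reduces the order. (Alternatively, one can form the reduced rows $c^{(i)}Q(\lambda)/(\lambda-\lambda_0^{(i)})$ for each Galois conjugate and average \emph{those} --- that version of the averaging idea can be made to work, but averaging the vectors $c$ themselves, as your sketch says, cannot.)
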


\begin{remark} Since all of the minimal bases that appear in this work are arranged as the rows of a matrix, with a slight abuse of notation, we say that an $m\times n$ matrix polynomial (with $m < n$) is a minimal basis if its rows form a minimal basis of the rational subspace they span.
\end{remark}

Another fundamental concept in this paper is the concept of \emph{dual minimal bases}, which is introduced in Definition \ref{def:dualminimalbases}. 
\begin{definition}{\rm (see \cite{Kailath} or \cite[Definition 2.10]{zigzag})} \label{def:dualminimalbases}
Two matrix polynomials $K(\lambda)\in\FF[\lambda]^{m_1\times n}$ and $N(\lambda)\in\FF[\lambda]^{m_2\times n}$ are called \emph{dual minimal bases} if $K(\lambda)$ and $N(\lambda)$ are both minimal bases and they satisfy
$m_1+m_2 = n$ and $K(\lambda)N(\lambda)^T = 0$.
\end{definition}
\begin{remark}
Following the convention in \cite{minimal_pencils}, we will sometimes say ``$N(\lambda)$ is a minimal basis dual to $K(\lambda)$'', or vice versa, to refer to matrix polynomials $K(\lambda)$ and $N(\lambda)$ as those in Definition \ref{def:dualminimalbases}.
\end{remark}

 We illustrate in Example \ref{ex-L-Lamb} the concept of dual minimal bases with a simple example that plays a key role in this paper (this example can be also found in \cite[Example 2.6]{minimal_pencils}).
Here and throughout the paper we occasionally omit some, or all, of the zero entries of a matrix. 
\begin{example} \label{ex-L-Lamb}   Consider the following matrix polynomials:
\begin{equation}
\label{eq:Lk}
L_k(\lambda):=\begin{bmatrix}
-1 & \lambda  \\
& -1 & \lambda \\
& & \ddots & \ddots \\
& & & -1 & \lambda  \\
\end{bmatrix}\in\mathbb{F}[\lambda]^{k\times(k+1)},
\end{equation}
and
\begin{equation}
  \label{eq:Lambda}
  \Lambda_k(\lambda)^T :=
\begin{bmatrix}
      \lambda^{k} & \cdots & \lambda & 1
\end{bmatrix} \in \FF[\lambda]^{1\times (k+1)}.
\end{equation}
Using Theorem \ref{thm:minimal_basis}, it is easily checked that $L_k(\lambda)$ and $\Lambda_k(\lambda)^T$ are both minimal bases. 
Additionally, $L_k(\lambda)\Lambda_k(\lambda)=0$ holds. 
Therefore, $L_k(\lambda)$ and $\Lambda_k(\lambda)^T$ are dual minimal bases. 
Also, from \cite[Corollary 2.4]{minimal_pencils} and basic properties of the Kronecker product $\otimes$, we get that $L_k(\lambda) \otimes I_n$ and $\Lambda_k(\lambda)^T \otimes I_n$ are also dual minimal bases.
\end{example}

Notice the following  property of the matrix polynomials $L_k(\lambda)\otimes I_n$ and $\Lambda_k(\lambda)^T\otimes I_n$ in Example \ref{ex-L-Lamb}.
Both are minimal bases whose row degrees are all equal (equal to 1 in the case of $L_k(\lambda)\otimes I_n$, and equal to $k$ in the case of $\Lambda_k(\lambda)^T\otimes I_n$).
Those are the minimal bases that we are interested in this work, and, sometimes, we will refer to them as \emph{constant-row-degrees minimal bases}. 

In Lemma \ref{lemma:constant-row-degrees} we present a  simple characterization of constant-row-degrees minimal bases.
This result is an immediate corollary of Theorem \ref{thm:minimal_basis}, together with the obvious fact that if the leading coefficient of a matrix polynomial has full row rank, then its leading and highest row degree coefficients coincide, so its proof is omitted.
\begin{lemma}\label{lemma:constant-row-degrees}
The matrix polynomial $K(\lambda)=\sum_{i=0}^\ell K_i\lambda^i$ of degree $\ell$ is a constant-row-degrees minimal basis if and only if $K(\lambda_0)$ has full row rank for all $\lambda_0\in\overline{\mathbb{F}}$ and its leading coefficient $K_\ell$ has full row rank.
\end{lemma}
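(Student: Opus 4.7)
The plan is to derive this lemma directly from Theorem \ref{thm:minimal_basis}, using the observation that requiring all row degrees to be equal collapses the ``row reduced'' condition into a condition on the leading coefficient $K_\ell$. I will handle the two implications separately, since the key bookkeeping in each direction is slightly different.

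For the forward direction, assume $K(\lambda)$ is a constant-row-degrees minimal basis. By definition, all row degrees $d_1,\ldots,d_m$ coincide, and since $\deg(K(\lambda))=\ell$ and the $j$-th row contributes $\lambda^{d_j}$ terms, each $d_j$ must equal $\ell$. Consequently, the highest row degree coefficient matrix $K_h$ of Definition \ref{def:rowreduced} equals the leading coefficient $K_\ell$. Theorem \ref{thm:minimal_basis} then gives that $K(\lambda_0)$ has full row rank for every $\lambda_0\in\overline{\mathbb{F}}$, and row reducedness translates directly into the statement that $K_\ell=K_h$ has full row rank.

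For the converse, assume $K(\lambda_0)$ has full row rank for every $\lambda_0\in\overline{\mathbb{F}}$ and that $K_\ell$ has full row rank. Full row rank of $K_\ell$ forces every row of $K_\ell$ to be nonzero, so every row of $K(\lambda)$ must have degree exactly $\ell$; hence $K(\lambda)$ has constant row degrees equal to $\ell$ and its highest row degree coefficient matrix coincides with $K_\ell$, so it is row reduced. Combined with the pointwise full rank hypothesis, Theorem \ref{thm:minimal_basis} yields that $K(\lambda)$ is a minimal basis, and the constancy of the row degrees was already established.

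There is no substantial obstacle here: the only tiny subtlety is the identification $K_h=K_\ell$, which in the forward direction requires knowing that all row degrees are $\ell$ (not just equal to some common value), and in the backward direction requires using full row rank of $K_\ell$ to rule out rows of $K(\lambda)$ of degree less than $\ell$. Both are straightforward, which is precisely why the authors themselves omit the proof.
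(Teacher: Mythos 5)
Your proof is correct and follows exactly the route the authors indicate when they declare the lemma an immediate corollary of Theorem~\ref{thm:minimal_basis} together with the observation that a full-row-rank leading coefficient coincides with the highest row degree coefficient matrix. You simply spell out both implications in full, which is a faithful expansion of the argument the paper leaves implicit.
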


We now recall the definitions of unimodular matrix polynomials and (strong) linearizations  of matrix polynomials.
A \emph{unimodular matrix polynomial} $U(\lambda)$ is a matrix polynomial whose determinant $\det U(\lambda)$ is a nonzero constant.
A matrix pencil  $\mathcal{L}(\lambda)$ is said to be a linearization of a matrix polynomial $P(\lambda)$ of grade $g$ if for some $s\geq 0$ there exist unimodular matrices $U(\lambda)$ and $V(\lambda)$ such that
\[
U(\lambda)\mathcal{L}(\lambda)V(\lambda) = 
\begin{bmatrix}
I_s & 0 \\
0 & P(\lambda)
\end{bmatrix}.
\]
In addition, a linearization $\mathcal{L}(\lambda)$ is called a \emph{strong linearization} of $P(\lambda)$ if $\rev_1\mathcal{L}(\lambda)$ is a linearization of $\rev_g P(\lambda)$.
We recall that the key property of any strong linearization $\mathcal{L}(\lambda)$ of the matrix polynomial $P(\lambda)$ is that $P(\lambda)$ and $\mathcal{L}(\lambda)$ share the same finite and infinite elementary divisors and the 
same number of left and right minimal indices.
However the minimal indices of $\mathcal{L}(\lambda)$ may take any value \cite[Theorem 4.11]{spectral_equivalence}.
For this reason, in the case
of singular matrix polynomials, the identification of those strong linearizations with the additional property that their minimal indices allow one to recover the minimal indices of the polynomial via some simple rules has been the focus of an intense research \cite{singular,Fiedler_pencils,
PalindromicFiedler,spectral_equivalence,minimal_pencils}.

Given two matrix polynomials $P(\lambda)$ and $Q(\lambda)$ with the same size,   we say that $P(\lambda)$ and $Q(\lambda)$ are \emph{strictly equivalent} if $Q(\lambda)=UP(\lambda)V$, for some nonsingular constant matrices $U$ and $V$, and we say that $P(\lambda)$ and $Q(\lambda)$ are \emph{$\star$-congruent} if $Q(\lambda) = XP(\lambda)X^\star$, for some nonsingular constant matrix $X$.
Clearly, $\star$-congruence is a particular case of strict equivalence.
We recall that strict equivalence preserves both the spectral and singular structures of matrix polynomials \cite[Definition 3.1]{spectral_equivalence}.

Another simple concept that plays an important role in this work is the concept of coninvolutory matrix \cite{coninvolutory}, which is introduced in Definition \ref{def:involutory}
\begin{definition}\label{def:involutory}
A matrix $A\in\mathbb{F}^{n\times n}$ is said to be \emph{coninvolutory} if $A\cdot \overline{A}=I_n$. 
\end{definition}

Coninvolutory matrices when $\mathbb{F}=\mathbb{R}$ are just known as \emph{involutory} matrices, and any real $n\times n$ involutory matrix $A$ satisfies $A\cdot A = I_n$.
In this work, we will make use of $2\times 2$ coninvolutory matrices.
When $\mathbb{F}=\mathbb{R}$,  there is a nice characterization of $2\times 2$ involutory matrices. 
This is shown in Example \ref{ex:involutory}.
\begin{example}\label{ex:involutory}
Any $2\times 2$ real involutory matrix is of the form
\[
\begin{bmatrix}
\pm 1 & 0 \\
0 & \pm 1
\end{bmatrix} \quad \mbox{or} \quad  
\begin{bmatrix}
\pm \sqrt{1-bc} & b \\ c & \mp \sqrt{1-bc}
\end{bmatrix},
\]
where $b,c\in\mathbb{R}$ satisfy $bc\leq 1$.
\end{example}

The backward error analysis in Section \ref{sec:analysis} requires the use of norms of matrix polynomials and their submultiplicative-like properties.
Following \cite{minimal_pencils}, we choose the simple norm in Definition \ref{def:norm}. 
\begin{definition} \label{def:norm} Let $P(\lambda) = \sum_{i=0}^g P_i \lambda^i \in \mathbb{F} [\lambda]^{m\times n}$.
Then the Frobenius norm of $P(\lambda)$ is
\[
\|P(\lambda)\|_F := \sqrt{\sum_{i=0}^g \|P_i\|_F^2} \, .
\]
Notice that the value of the norm $\|P(\lambda)\|_F$ does not depend on the grade chosen for $P(\lambda)$. 
This property allows one to work with $\|P(\lambda)\|_F$ without specifying the grade of the matrix polynomial $P(\lambda)$.
\end{definition}

As it is pointed out in \cite{minimal_pencils}, the norm $\|\cdot\|_F$ is not submultiplicative, that is, $\|P(\lambda) \, Q(\lambda)\|_F\allowbreak \leq \|P (\lambda)\|_F \, \|Q(\lambda)\|_F$ does not hold in general. 
However, Lemma \ref{lemma:normsproducts} shows that the norm $\|\cdot\|_F$ satisfies some submultiplicative-like properties.
\begin{lemma} \label{lemma:normsproducts}{\rm \cite[Lemma 2.16]{minimal_pencils}} Let $P(\lambda) = \sum_{i=0}^g P_i \lambda^i$, let $Q(\lambda) = \sum_{i=0}^t Q_i \lambda^i$, and let $\Lambda_k (\lambda)^T$ be the vector polynomial defined in \eqref{eq:Lambda}.
 Then the following inequalities hold:
\begin{enumerate}
\item[\rm (a)] $\displaystyle \|P (\lambda) \, Q(\lambda)\|_F \leq \sqrt{g+1} \cdot \sqrt{\sum_{i=0}^g \|P_i\|_2 ^2}  \cdot \|Q(\lambda)\|_F$  ,

\item[\rm (b)] $\displaystyle \|P(\lambda) \, Q(\lambda)\|_F \leq \sqrt{t+1} \cdot \|P(\lambda)\|_F \cdot \sqrt{\sum_{i=0}^t \|Q_i\|_2 ^2}$ ,

\item[\rm (c)] $\|P (\lambda) \, Q(\lambda)\|_F \leq \, \min\{\sqrt{g+1} , \sqrt{t+1} \}\,  \|P (\lambda)\|_F \, \|Q(\lambda)\|_F$ ,

\item[\rm (d)] $\|P(\lambda) \, (\Lambda_k (\lambda) \otimes I_p) \|_F \leq \, \min\{\sqrt{g+1}, \sqrt{k+1} \} \, \|P(\lambda) \|_F$,

\item[\rm (e)] $\|(\Lambda_k (\lambda)^T \otimes I_n) \, Q(\lambda)\|_F \leq \, \min\{\sqrt{t+1}, \sqrt{k+1} \}  \, \|Q(\lambda) \|_F$,
\end{enumerate}
where we assume that all the products are defined.
\end{lemma}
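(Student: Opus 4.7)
The plan is to prove all five bounds by expanding the products coefficient-by-coefficient and applying Cauchy--Schwarz in the appropriate direction, exploiting the fact that in each Cauchy product only a limited number of index pairs can contribute nonzero terms. I would start with (a), since (b), (c) then follow by symmetry or specialization, and (d), (e) require a more refined accounting of the block structure of $\Lambda_k(\lambda)$.

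For (a), write $P(\lambda)Q(\lambda)=\sum_{\ell=0}^{g+t}\bigl(\sum_{i+j=\ell} P_i Q_j\bigr)\lambda^\ell$, so
\[
\|P(\lambda)Q(\lambda)\|_F^2=\sum_{\ell=0}^{g+t}\Bigl\|\sum_{i+j=\ell}P_iQ_j\Bigr\|_F^2.
\]
The inner sum has at most $g+1$ nonzero summands (since $i$ must lie in $\{0,\dots,g\}$), so Cauchy--Schwarz applied to these summands gives $\|\sum_{i+j=\ell}P_iQ_j\|_F^2\le (g+1)\sum_{i+j=\ell}\|P_iQ_j\|_F^2$. Now use the standard submultiplicativity $\|P_iQ_j\|_F\le \|P_i\|_2\|Q_j\|_F$, swap the order of summation, and factor to obtain $(g+1)\bigl(\sum_i \|P_i\|_2^2\bigr)\bigl(\sum_j\|Q_j\|_F^2\bigr)$. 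Taking square roots yields (a). For (b), use exactly the same argument but bound the inner sum via $j\in\{0,\dots,t\}$, and write $\|P_iQ_j\|_F\le \|P_i\|_F\|Q_j\|_2$. For (c), apply $\|P_i\|_2\le \|P_i\|_F$ in (a) and $\|Q_j\|_2\le\|Q_j\|_F$ in (b), then take the better of the two resulting bounds.

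For (d) and (e), the key observation is that $\Lambda_k(\lambda)^T\otimes I_p$ has coefficient matrices of the form $e_{k+1-j}^T\otimes I_p$, each of $2$-norm equal to $1$, and moreover the product $P(\lambda)(\Lambda_k(\lambda)\otimes I_p)$ simply picks out block columns of the coefficients $P_i$. Partition $P_i=[P_{i,1}\;P_{i,2}\;\cdots\;P_{i,k+1}]$ into blocks of $p$ columns each (so $\|P(\lambda)\|_F^2=\sum_i\sum_s\|P_{i,s}\|_F^2$). Then a direct computation gives
\[
P(\lambda)(\Lambda_k(\lambda)\otimes I_p)=\sum_{\ell=0}^{g+k}\Bigl(\sum_{i+j=\ell}P_{i,k+1-j}\Bigr)\lambda^\ell.
\]
Each inner sum has at most $\min\{g+1,k+1\}$ nonzero terms, and by Cauchy--Schwarz
\[
\Bigl\|\sum_{i+j=\ell}P_{i,k+1-j}\Bigr\|_F^2 \le \min\{g+1,k+1\}\sum_{i+j=\ell}\|P_{i,k+1-j}\|_F^2.
\]
Summing over $\ell$ and reindexing the pair $(i,k+1-j)$ over all blocks shows $\sum_{\ell}\sum_{i+j=\ell}\|P_{i,k+1-j}\|_F^2=\|P(\lambda)\|_F^2$, which proves (d). Part (e) follows from an identical block-row argument applied to $(\Lambda_k(\lambda)^T\otimes I_n)Q(\lambda)$.

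The only genuinely delicate step is (d)--(e): naively plugging $\Lambda_k(\lambda)\otimes I_p$ into (b) or (c) overcounts by a factor of $\sqrt{k+1}$, because those bounds use the global norm $\|\Lambda_k(\lambda)\otimes I_p\|_F=\sqrt{k+1}$ rather than the fact that each individual coefficient has unit $2$-norm. The refined bookkeeping above, exploiting that $\Lambda_k$ merely shuffles block columns without mixing them, is what yields the sharper constant; I expect this to be the only place where care is required, while parts (a)--(c) are routine Cauchy--Schwarz.
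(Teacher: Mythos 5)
Your proof is correct. Note first that the paper you are working from does not prove this lemma; it cites it directly as \cite[Lemma 2.16]{minimal_pencils}, so there is no in-paper proof to compare against, only the external reference.

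That said, your argument is sound and essentially the standard one. Parts (a)--(c): the Cauchy-product decomposition, Cauchy--Schwarz on the at most $g+1$ (respectively $t+1$) nonzero summands, the mixed submultiplicativity $\|AB\|_F\le\|A\|_2\|B\|_F$ (respectively $\le\|A\|_F\|B\|_2$), and the re-summation $\sum_\ell\sum_{i+j=\ell}=\sum_i\sum_j$ are exactly what is needed; passing to (c) via $\|\cdot\|_2\le\|\cdot\|_F$ is immediate. Parts (d)--(e): you correctly identify that the generic bounds (b)--(c) are lossy here, since they count the full Frobenius mass of $\Lambda_k(\lambda)\otimes I_p$, whereas multiplication by $\Lambda_k(\lambda)\otimes I_p$ merely selects and shifts block columns of the $P_i$ without mixing them. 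Your re-indexing $\sum_{\ell}\sum_{i+j=\ell}\|P_{i,k+1-j}\|_F^2=\sum_i\sum_{s}\|P_{i,s}\|_F^2=\|P(\lambda)\|_F^2$ is correct (the constraint $0\le j\le k$ makes $k+1-j$ range exactly over $\{1,\dots,k+1\}$), and the inner-sum cardinality is indeed bounded by $\min\{g+1,k+1\}$, giving the tight constant. One cosmetic slip: you write $\|\Lambda_k(\lambda)\otimes I_p\|_F=\sqrt{k+1}$, whereas in fact $\|\Lambda_k(\lambda)\otimes I_p\|_F=\sqrt{(k+1)p}$; the quantity $\sqrt{k+1}$ is $\sqrt{\sum_j\|Q_j\|_2^2}$, which is the one actually relevant to applying (b). This does not affect your proof, since you do not use it.
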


\medskip

Finally, since in Section \ref{sec:analysis} we need to consider pairs of matrices $(C,D)$ where $C$ and $D$ may have different sizes, and, thus, $(C,D)$ cannot be considered as a matrix pencil, we introduce the corresponding Frobenius norm as:
\begin{equation}\label{eq:norm_pair}
\|(C,D)\|_F:=\sqrt{\|C\|_F^2+\|D\|_F^2}.
\end{equation}

\section{M\"{o}bius transformations and structured odd-grade matrix polynomials}\label{sec:Mobius}

The goal of this section is to introduce a unified framework for the most important classes of structured matrix polynomials of odd grade considered in the literature, namely, (skew-)symmetric, (anti-)palindromic and alternating odd-grade matrix polynomials.
This  requires to introduce the concepts of  M\"{o}bius tranformation and $\mathbf{M}_A$-structured matrix polynomial.

\subsection{M\"obius transformations of matrix polynomials}

M\"obius transformations of matrix polynomials were formally introduced  in \cite{Mobius} as a broader theory for different transformations that had appeared previously in the literature \cite{Hermitian,definite,MMMM_vector_space,
GoodVibrations,Alternating,Palindromic}, and since then, they play an increasingly important role as a useful tool in the theory of matrix polynomials. 

\begin{definition}\label{def:Mobius}{\rm \cite[Definition 3.4]{Mobius}}
Let $A\in {\rm GL}(2,\mathbb{F})$.
The \emph{M\"obius transformation} of $B(\lambda):= \sum_{i=0}^g B_i\lambda^i$ induced by $A$ is  defined by
\[
\mathbf{M}_A[B](\lambda) := \sum_{i=0}^g B_i(a\lambda +b)^i(c\lambda +d)^{g-i}, \quad \mbox{where} \quad A =\begin{bmatrix} a & b \\ c & d \end{bmatrix}.
\]
\end{definition}

We recall that M\"obius transformations are  special cases of rational transformations of matrix polynomials \cite{rational}.
Indeed, a M\"obius transformation can be calculated via the rational expression
\begin{equation}\label{eq:Mobius_rational}
\mathbf{M}_A\left[B \right](\lambda) = 
(c\lambda+d)^g B\left( \frac{a\lambda + b}{c\lambda + d} \right), \quad \mbox{where} \quad A =\begin{bmatrix} a & b \\ c & d \end{bmatrix}.
\end{equation}

In Example \ref{ex:Mobius_pencil}, we illustrate the effect of M\"obius transformation on matrix pencils.
\begin{example}\label{ex:Mobius_pencil}
Let $L(\lambda)=\lambda F+E$ and let $A = \left[\begin{smallmatrix} a & b \\ c & d \end{smallmatrix}\right]\in {\rm GL}(2,\mathbb{F})$.
Then, $\mathbf{M}_A[L](\lambda) = \lambda(aF+cE)+bF+dE$.
\end{example}

The $g$-reversal of a matrix polynomial operation is a well-known example of a M\"obius transformation of matrix polynomials.
We show in Example \ref{ex:reversal} how to formulate this operation as a M\"obius transformation.
\begin{example}\label{ex:reversal}
Let $P(\lambda)=\sum_{i=0}^g P_i\lambda^i$,  and let
$
R_2 := \left[\begin{smallmatrix}
0 & 1 \\
1 & 0
\end{smallmatrix}\right].
$
Then, $\rev_g P(\lambda) = \mathbf{M}_{R_2}[P](\lambda)$.
\end{example}

Many important properties of M\"obius transformations of matrix polynomials follow easily from Definition \ref{def:Mobius} or its rational transformation formulation in \eqref{eq:Mobius_rational}. 
In Proposition \ref{prop:properties_Mobius}, we state without proofs those that will be relevant in this work.
For a thorough study of the properties of M\"obius transformations of matrix polynomials we refer the reader to \cite{Mobius}.
\begin{proposition}\label{prop:properties_Mobius}
For any $A,B\in {\rm GL}(2,\mathbb{F})$ the following statements hold.
\begin{itemize}
\item[\rm (a)] $\mathbf{M}_A[P+Q](\lambda)=\mathbf{M}_A[P](\lambda)+\mathbf{M}_A[Q](\lambda)$, for any $m\times n$ matrix polynomials $P(\lambda)$ and $Q(\lambda)$ both of grade $g$.
\item[\rm (b)] Let $P(\lambda)$ and $Q(\lambda)$ be two matrix polynomials of grades $g_1$ and $g_2$, respectively.
If $P(\lambda)Q(\lambda)$ is defined, then $\mathbf{M}_A[PQ](\lambda)=\mathbf{M}_A[P](\lambda)\mathbf{M}_A[Q](\lambda)$,  where $P(\lambda)Q(\lambda)$ is considered as a matrix polynomial of grade $g_1+g_2$.
\item[\rm (c)] If $Q(\lambda)=P(\lambda)\otimes I_n$, then $\mathbf{M}_A[Q](\lambda) = \mathbf{M}_A[P](\lambda)\otimes I_n$.
\item[\rm (d)] $\mathbf{M}_A[P^T](\lambda)=\mathbf{M}_A[P](\lambda)^T$.
\item[\rm (e)] If $\mathbb{F}=\mathbb{C}$, then  $\overline{\mathbf{M}_A[P]}(\lambda) = \mathbf{M}_{\overline{A}}[\overline{P}](\lambda)$ and $\mathbf{M}_A[P](\lambda)^*=\mathbf{M}_{\overline{A}}[P^*](\lambda)$.
\item[\rm (f)] M\"obius transformations act block-wise, i.e., $\left[\mathbf{M}_A[P](\lambda)\right]_{\mu\kappa} = \mathbf{M}_A[P_{\mu\kappa}](\lambda)$, for any row and column index sets $\mu$ and $\kappa$, and where $[P(\lambda)]_{\mu\kappa}$ has to be considered as a matrix polynomial  with a grade equal to the grade of $P(\lambda)$.
\item[\rm (g)] $\mathbf{M}_B\left[\mathbf{M}_A[P]\right](\lambda) = \mathbf{M}_{AB}[P](\lambda)$.
\end{itemize}
\end{proposition}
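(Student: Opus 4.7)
My plan is to prove the seven items by selecting, for each, whichever of Definition~\ref{def:Mobius} (the coefficient-wise sum) or the equivalent rational expression~\eqref{eq:Mobius_rational} is more convenient. Items (a), (c), (d), and (f) are essentially formal consequences of the coefficient-wise definition: addition of polynomials of the same grade, Kronecker product with $I_n$, transposition of each coefficient $B_i$, and block extraction all commute with scalar multiplication of $B_i$ by the polynomial $(a\lambda+b)^i(c\lambda+d)^{g-i}$. The only subtlety, which the statement itself flags in (c) and (f), is that the Kronecker factor or the extracted block must inherit the grade $g$ of $P(\lambda)$, since the transformation depends on $g$ through the exponent of $(c\lambda+d)$.

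For (b) and (e) I would switch to the rational form~\eqref{eq:Mobius_rational}, writing $\phi_A(\lambda) := (a\lambda+b)/(c\lambda+d)$. Property (b) reduces to the identity
\[
(c\lambda+d)^{g_1+g_2}(PQ)(\phi_A(\lambda)) = \bigl[(c\lambda+d)^{g_1}P(\phi_A(\lambda))\bigr]\bigl[(c\lambda+d)^{g_2}Q(\phi_A(\lambda))\bigr],
\]
which is immediate once the grade convention $\grade(PQ)=g_1+g_2$ is enforced. The first identity in (e) is obtained by conjugating each summand $B_i(a\lambda+b)^i(c\lambda+d)^{g-i}$ entrywise and recognizing the result as $\mathbf{M}_{\overline A}[\overline P](\lambda)$; the $\star$-identity then follows by combining this with (d).

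The main work is in (g), the composition rule. Denoting the entries of $A$ and $B$ by $a_A,b_A,c_A,d_A$ and $a_B,b_B,c_B,d_B$, I would compute $\mathbf{M}_B[\mathbf{M}_A[P]](\lambda)$ by substituting the rational form of $\mathbf{M}_A[P]$ at $\phi_B(\lambda)$, obtaining
\[
(c_B\lambda+d_B)^g\bigl(c_A\phi_B(\lambda)+d_A\bigr)^g P\bigl(\phi_A(\phi_B(\lambda))\bigr),
\]
and then absorbing the factor $(c_B\lambda+d_B)^g$ via the algebraic identity
\[
(c_B\lambda+d_B)\bigl(c_A\phi_B(\lambda)+d_A\bigr) = (c_A a_B + d_A c_B)\lambda + (c_A b_B + d_A d_B),
\]
whose right-hand side is precisely the bottom row of $AB$. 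A parallel computation on $a_A\phi_B(\lambda)+b_A$ yields the top row of $AB$, showing $\phi_A\circ\phi_B = \phi_{AB}$, whence $\mathbf{M}_B[\mathbf{M}_A[P]] = \mathbf{M}_{AB}[P]$. The only pitfall I anticipate is keeping the composition order consistent with $AB$ rather than $BA$; the calculation above fixes this convention unambiguously.
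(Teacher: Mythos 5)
The paper itself states Proposition~3.3 \emph{without proof}, deferring entirely to the reference [Mobius] (Mackey, Mackey, Mehl, Mehrmann) for derivations, so there is no paper proof to compare against; your sketch supplies the missing argument. It is correct and is essentially the standard route one would take: coefficient-wise reasoning for the linear/structural items (a), (c), (d), (e), (f), and the rational form $\mathbf{M}_A[P](\lambda)=(c\lambda+d)^g P(\phi_A(\lambda))$ for (b) and (g). For (g), your absorption identity
\[
(c_B\lambda+d_B)\bigl(c_A\phi_B(\lambda)+d_A\bigr)=(c_A a_B+d_A c_B)\lambda+(c_A b_B+d_A d_B)
\]
indeed reproduces the bottom row of $AB$, and the parallel computation on $a_A\phi_B(\lambda)+b_A$ gives the top row, so $\phi_A\circ\phi_B=\phi_{AB}$ and the composition order lands on $AB$, not $BA$, as you claim. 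One small point worth making explicit if you write this up: the rational form and the manipulations in (b) and (g) are valid a priori only on the open set where $c\lambda+d\neq 0$ (and $c_B\lambda+d_B\neq 0$); since both sides of each identity are polynomials in $\lambda$ agreeing on a Zariski-dense subset of $\overline{\mathbb{F}}$, they coincide identically. With that remark added, the proof is complete and matches the approach of the cited source.
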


\medskip

M\"obius transformations play well with the constant-row-degrees dual minimal bases that will be involved in the construction of the strong block minimal bases pencils in Section \ref{sec:minimal_bases_pencils} (see Definition  \ref{def:minlinearizations}).
More precisely, we have Theorem \ref{thm:minimal_basis_Mobius}.
Some of the results in Theorem \ref{thm:minimal_basis_Mobius} can be obtained from \cite[part (f) of Theorem 7.4]{Mobius}, where the effect of M\"obius transformations on minimal bases is studied.
Nonetheless, for the sake of completeness, we provide a proof of Theorem \ref{thm:minimal_basis_Mobius} here.
\begin{theorem}\label{thm:minimal_basis_Mobius}
Let $A\in{\rm GL}(2,\mathbb{F})$. 
Then, the following statements hold.
\begin{itemize}
\item[\rm (a)] If $K(\lambda)=\sum_{i=0}^\ell K_i\lambda^i$ is a minimal basis with all its row degrees equal to $\ell$, then $\mathbf{M}_A[K](\lambda)$ is also a minimal basis with all its row degrees equal to $\ell$.
\item[\rm (b)] If $K(\lambda)$ and $N(\lambda)$ are a pair of dual minimal bases with all the row degrees of $K(\lambda)$  equal to $\ell$ and all the row degrees of $N(\lambda)$  equal to $t$, then $\mathbf{M}_A[K](\lambda)$ and $\mathbf{M}_A[N](\lambda)$ are also a pair of dual minimal bases with all the row degrees of $\mathbf{M}_A[K](\lambda)$  equal to $\ell$ and all the row degrees of $\mathbf{M}_A[N](\lambda)$  equal to $t$.
\end{itemize}
\end{theorem}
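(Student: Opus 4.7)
The plan is to first establish part (a) via the clean characterization of constant-row-degrees minimal bases given in Lemma \ref{lemma:constant-row-degrees}, and then to derive part (b) as an almost immediate consequence of part (a) combined with the multiplicative and transpositional properties of M\"obius transformations recorded in Proposition \ref{prop:properties_Mobius}.

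For part (a), Lemma \ref{lemma:constant-row-degrees} reduces the problem to checking two conditions on $\mathbf{M}_A[K](\lambda)$, viewed as a polynomial of grade $\ell$: (i) its $\lambda^\ell$-coefficient has full row rank, and (ii) $\mathbf{M}_A[K](\lambda_0)$ has full row rank for every $\lambda_0\in\overline{\mathbb{F}}$. Expanding the definition, the coefficient of $\lambda^\ell$ in $\mathbf{M}_A[K](\lambda)=\sum_{i=0}^\ell K_i(a\lambda+b)^i(c\lambda+d)^{\ell-i}$ is $\sum_{i=0}^\ell K_i\,a^i c^{\ell-i}$. If $c\neq 0$ this equals $c^\ell K(a/c)$, which has full row rank because $K(\mu)$ does at every $\mu\in\overline{\mathbb{F}}$; if $c=0$, then invertibility of $A$ forces $a\neq 0$ and the sum collapses to $a^\ell K_\ell$, which has full row rank by Lemma \ref{lemma:constant-row-degrees} applied to $K(\lambda)$. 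This settles (i) and simultaneously shows that $\mathbf{M}_A[K](\lambda)$ truly has degree $\ell$, so all its row degrees equal $\ell$. For (ii), I would split the evaluation of $\mathbf{M}_A[K](\lambda_0)$ into two cases. When $c\lambda_0+d\neq 0$, the rational expression \eqref{eq:Mobius_rational} yields $\mathbf{M}_A[K](\lambda_0)=(c\lambda_0+d)^\ell\,K\!\left(\frac{a\lambda_0+b}{c\lambda_0+d}\right)$, a nonzero scalar multiple of $K$ evaluated at a point of $\overline{\mathbb{F}}$, hence of full row rank. When $c\lambda_0+d=0$ (only possible if $c\neq 0$, giving $\lambda_0=-d/c$), substituting directly into the definition leaves only the $i=\ell$ term, so $\mathbf{M}_A[K](\lambda_0)=K_\ell\,(a\lambda_0+b)^\ell$; the scalar $a\lambda_0+b=(bc-ad)/c$ is nonzero because $\det A\neq 0$, and $K_\ell$ has full row rank by Lemma \ref{lemma:constant-row-degrees}, so $\mathbf{M}_A[K](\lambda_0)$ has full row rank as well.

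For part (b), the equal-row-degrees minimality of $\mathbf{M}_A[K](\lambda)$ and $\mathbf{M}_A[N](\lambda)$, with row degrees $\ell$ and $t$ respectively, follows immediately by applying part (a) to each factor. The dimension identity $m_1+m_2=n$ is inherited unchanged because M\"obius transformations preserve matrix sizes. For the orthogonality condition, I would regard $K(\lambda)N(\lambda)^T$ as a polynomial of grade $\ell+t$ and chain parts (b) and (d) of Proposition \ref{prop:properties_Mobius} to obtain
\[
\mathbf{M}_A[K](\lambda)\,\mathbf{M}_A[N](\lambda)^T \;=\; \mathbf{M}_A[K](\lambda)\,\mathbf{M}_A[N^T](\lambda) \;=\; \mathbf{M}_A[KN^T](\lambda) \;=\; \mathbf{M}_A[0](\lambda) \;=\; 0,
\]
which together with minimality completes the verification of Definition \ref{def:dualminimalbases}.

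The main obstacle I anticipate is the case analysis at the pole $\lambda_0=-d/c$ of the M\"obius map, where the convenient rational formula \eqref{eq:Mobius_rational} breaks down; the argument there is rescued precisely by the full-row-rank hypothesis on the leading coefficient $K_\ell$, which is the extra content that distinguishes constant-row-degrees minimal bases from general minimal bases. Once this delicate point is handled, everything else reduces to careful bookkeeping with the formal properties of M\"obius transformations already collected in Proposition \ref{prop:properties_Mobius}.
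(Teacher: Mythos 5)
Your proof is correct and follows essentially the same route as the paper's: both reduce part (a) to the two conditions of Lemma \ref{lemma:constant-row-degrees}, handle the leading coefficient and pointwise full-row-rank checks by the same $c\neq 0$ / $c=0$ (respectively $c\lambda_0+d\neq 0$ / $c\lambda_0+d=0$) case split using the rational formula \eqref{eq:Mobius_rational} and the full-row-rank hypothesis on $K_\ell$, and then obtain part (b) by combining part (a) with parts (b) and (d) of Proposition \ref{prop:properties_Mobius}. The only cosmetic differences are that the paper argues the rank conditions by contradiction and checks them in the opposite order, whereas you argue them directly.
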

\begin{proof}
Proof of part (a).
In the proof we use the notation $\widehat{K}(\lambda)=\sum_{i=0}^\ell \widehat{K}_i\lambda^i := \mathbf{M}_A[K](\lambda)$, and denote the entries of $A$ as $A=\left[ \begin{smallmatrix} a & b \\ c & d \end{smallmatrix}\right]$.
We first show that $\widehat{K}(\lambda_0)$ has full row rank for all $\lambda_0\in\overline{\mathbb{F}}$. 
The proof proceeds by contradiction.
Assume that $\widehat{K}(\lambda_0)$ is rank deficient for some $\lambda_0$, that is, there exists a vector $x\neq 0$ such that $x^\star\widehat{K}(\lambda_0)=x^\star \sum_{i=0}^\ell K_i (a\lambda_0+b)^i(c\lambda_0+d)^{\ell-i}=0$.
We have to distinguish two cases. 
First, assume that $c\lambda_0+d\neq 0$. 
In this situation we get $x^\star \widehat{K}(\lambda_0)=(c\lambda_0+d)^\ell x^\star K\left( (a\lambda_0+b)/(c\lambda_0+d) \right)=0$ which implies $x^\star K\left( (a\lambda_0+b)/(c\lambda_0+d) \right)=0$, contradicting that $K(\mu_0)$ has full row rank for all $\mu_0$.
Assume, now, that $c\lambda_0+d=0$. 
Notice that the nonsingularity of $A$ implies $a\lambda_0+b\neq 0$.
Then, we get $x^\star \widehat{K}(\lambda_0)=(a\lambda_0+b)^\ell x^\star K_\ell$, which implies $x^\star K_\ell = 0$, contradicting that $K_\ell$ has full row rank. 
Therefore, $\widehat{K}(\lambda_0)$ has full row rank for all $\lambda_0$. 

Next, we proof that $\widehat{K}_\ell$ has full row rank.
Notice that the leading coefficient of $\widehat{K}(\lambda)$ can be computed as $\widehat{K}_\ell = \rev_{\ell} \left[ \mathbf{M}_A[K] \right](0) = \mathbf{M}_{AR_2}[K](0) = \sum_{i=0}^\ell K_i a^ic^{\ell-i}$, where 
$
R_2=\left[\begin{smallmatrix} 0 & 1 \\ 1 & 0 \end{smallmatrix}\right]
$ (recall Example \ref{ex:reversal}).
The proof proceeds by contradiction.
Assume that $\widehat{K}_\ell$ has not full row rank, that is, there exists a vector $x\neq 0$ such that $x^\star \widehat{K}_\ell=x^\star \sum_{i=0}^\ell K_i a^ic^{\ell-i}=0$.
Again, we have to distinguish two cases.
First, assume that $c\neq 0$.
Then, we get $x^\star \widehat{K}_\ell = c^\ell x^\star K(a/c)$, which implies $x^\star K(a/c)=0$, contradicting that $K(\mu_0)$ has full row rank for all $\mu_0$. 
Assume now that $c=0$.
Notice that the nonsingularity of $A$ implies, in this situation, $a\neq 0$.
In this case, we get $x^\star \widehat{K}_\ell=a^\ell x^\star K_\ell=0$  which implies $x^\star K_\ell=0$, contradicting that $K_\ell$ has full row rank. 
Therefore, $\widehat{K}_\ell$ has full row rank. 

Since $\mathbf{M}_A[K](\lambda_0)$ has full row rank for any $\lambda_0\in\overline{\mathbb{F}}$, and its leading matrix coefficient has full row rank, by Lemma \ref{lemma:constant-row-degrees}, we conclude that $\mathbf{M}_A[K](\lambda)$ is a minimal basis with all its row degrees equal to $\ell$.

Proof of part (b).
From part (a) we get that $\mathbf{M}_A[K](\lambda)$ and $\mathbf{M}_A[N](\lambda)$ are minimal bases with all the row degrees of $\mathbf{M}_A[K](\lambda)$  equal to $\ell$ and all the row degrees of $\mathbf{M}_A[N](\lambda)$  equal to $t$.
Then, from $K(\lambda)N(\lambda)^T=0$ together with properties (b) and (d) in Proposition \ref{prop:properties_Mobius}, we get $\mathbf{M}_A[K](\lambda)\mathbf{M}_A[N](\lambda)^T=0$.
Therefore, $\mathbf{M}_A[K](\lambda)$ and $\mathbf{M}_A[N](\lambda)$ are constant-row-degrees dual minimal bases.
\end{proof}

\subsection{$\mathbf{M}_A$-structured matrix polynomials}\label{sec:MA-structure}

M\"obius transformations of matrix polynomials can be used to introduce a new class of structured matrix polynomials that generalizes most of the classes that have been considered in the literature for odd-grade matrix polynomials.
This is done in the following definition, where we introduce the concept of $\mathbf{M}_A$-structured matrix polynomial.
\begin{definition}\label{def:Mstructure}
Let $P(\lambda)=\sum_{i=0}^g P_i\lambda^i \in\mathbb{F}[\lambda]^{n\times n}$ and let $A\in {\rm GL}(2,\mathbb{F})$.
Then, the matrix polynomial $P(\lambda)$ is said to be \emph{$\mathbf{M}_A$-structured} if $\mathbf{M}_A\left[P\right](\lambda)=P(\lambda)^\star$.
\end{definition}

We illustrate in Example \ref{ex:M_A_pencil} the concept of  $\mathbf{M}_A$-structured matrix polynomials in the simplest case, that is, for matrix pencils.
\begin{example}\label{ex:M_A_pencil}
Let $L(\lambda)= \lambda F+E$, and let $A = \left[\begin{smallmatrix} a & b \\ c & d \end{smallmatrix}\right]\in {\rm GL}(2,\mathbb{F})$.
If 
\[
F^\star = aF+cE  \quad \mbox{and} \quad E^\star = bF+dE,
\]
then $L(\lambda)$ is an $\mathbf{M}_A$-structured matrix pencil, and vice versa.
\end{example}

\begin{remark}
The classes of (skew-)symmetric, (anti-)palindromic, and alternating structured odd-grade matrix polynomials are particular examples of $\mathbf{M}_A$-structured matrix polynomials. 
In Table \ref{table:1} we summarize the values of the entries of the matrix $A$ for these structures.
 \begin{table}[ht]
\caption{The classical structured matrix polynomials of odd degree as $\mathbf{M}_A$-structured matrix polynomials: entries of the matrix $A=[a \,\,b;\, c \,\,d]$ for the classes of (skew-)symmetric, (anti-)palindromic, and alternating  matrix polynomials.}
\centering 
\begin{tabular}{|l|c|c|c|c|}
\hline \hline
\bf{Structure} \phantom{\Large{(}} &\phantom{a} $\mathbf{a}$ \phantom{a}&\phantom{a} $\mathbf{b}$ \phantom{a}&\phantom{a} $\mathbf{c}$ \phantom{a} &\phantom{a} $\mathbf{d}$ \phantom{a}\\
\hline\hline
Symmetric  \phantom{\Large{(}}  & 1 & 0 & 0 & 1 \\ 
\hline
Skew-symmetric   \phantom{\Large{(}} & -1 & 0 & 0 & -1 \\
\hline
Palindromic  \phantom{\Large{(}} & 0 & 1 & 1 & 0 \\
\hline
Anti-palindromic  \phantom{\Large{(}} & 0 & -1 & -1 & 0 \\
\hline
Alternating (even)  \phantom{\Large{(}} & -1 & 0 & 0 & 1 \\
\hline
Alternating (odd)  \phantom{\Large{(}} & 1 &  0 & 0 & -1 \\
\hline
\end{tabular}
\label{table:1}
\end{table}
\end{remark}

A key feature of the $\mathbf{M}_A$-structure introduced in Definition \ref{def:Mstructure} is that it is preserved under $\star$-congruence, as it is stated in the following proposition.
The proof of this result is straightforward, so it is omitted.
\begin{proposition}\label{prop:congruence}
Let $P(\lambda)\in\mathbb{F}[\lambda]^{n\times n}$, let $A\in{\rm GL}(2,\mathbb{F})$, and let $X\in\mathbb{F}^{n\times n}$ be a constant nonsingular matrix.
Then, the matrix polynomial $P(\lambda)$ is $\mathbf{M}_A$-structured if and only if the matrix polynomial $X^\star P(\lambda)X$ is $\mathbf{M}_A$-structured.
\end{proposition}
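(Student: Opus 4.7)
The proof is an \emph{iff} that reduces to a direct computation exploiting the algebraic properties of M\"obius transformations collected in Proposition \ref{prop:properties_Mobius}. The key idea is that the constant matrices $X$ and $X^\star$, treated as matrix polynomials of grade $0$, are fixed points of every M\"obius transformation, so conjugation by $X$ commutes with $\mathbf{M}_A[\cdot]$ and with the $\star$ operation.

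My plan is to set $Q(\lambda) := X^\star P(\lambda) X$, regarding $X^\star$ and $X$ as grade-$0$ matrix polynomials and $P(\lambda)$ as having its own grade $g$, so that by part (b) of Proposition \ref{prop:properties_Mobius} the triple product $Q(\lambda)$ should be viewed with grade $0 + g + 0 = g$. Applying parts (a) and (b) of that proposition twice then gives
\[
\mathbf{M}_A[Q](\lambda) \;=\; \mathbf{M}_A[X^\star](\lambda)\,\mathbf{M}_A[P](\lambda)\,\mathbf{M}_A[X](\lambda).
\]
Next I would observe that for any constant matrix $C\in\FF^{n\times n}$ regarded as a grade-$0$ polynomial, Definition \ref{def:Mobius} collapses the sum to a single term yielding $\mathbf{M}_A[C](\lambda)=C$. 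Substituting this for $C=X$ and $C=X^\star$ reduces the identity to
\[
\mathbf{M}_A[Q](\lambda) \;=\; X^\star\,\mathbf{M}_A[P](\lambda)\,X.
\]
On the other hand, the $\star$ operation applied to $Q(\lambda)$ gives $Q(\lambda)^\star = X^\star P(\lambda)^\star X$.

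Combining these two expressions, the equality $\mathbf{M}_A[Q](\lambda) = Q(\lambda)^\star$ that defines $\mathbf{M}_A$-structure of $Q(\lambda)$ becomes
\[
X^\star\,\mathbf{M}_A[P](\lambda)\,X \;=\; X^\star\,P(\lambda)^\star\,X.
\]
Since $X$ is nonsingular, I can cancel $X^\star$ on the left and $X$ on the right to conclude that this is equivalent to $\mathbf{M}_A[P](\lambda) = P(\lambda)^\star$, which is the $\mathbf{M}_A$-structure of $P(\lambda)$. This yields the ``if and only if'' in a single chain of equivalences, so the forward and backward implications come out simultaneously with no need to split cases or to invoke $X^{-1}$ separately.

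The only place that requires any care is the bookkeeping of grades when applying part (b) of Proposition \ref{prop:properties_Mobius}, since that property is only stated for products with the grades that sum correctly. Once grade $0$ is assigned to the constant factors and grade $g$ to $P(\lambda)$, the sum $0+g+0=g$ matches the natural grade of $Q(\lambda)$ that appears on the right-hand side of the $\mathbf{M}_A$-structure identity, so no inconsistency arises. I expect no other obstacle; the proof is essentially algebraic and short.
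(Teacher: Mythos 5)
Your proof is correct, and it is precisely the ``straightforward'' argument the authors had in mind; the paper itself omits the proof of Proposition \ref{prop:congruence}. Your bookkeeping of grades, the observation that $\mathbf{M}_A[C](\lambda)=C$ for a grade-$0$ constant $C$, and the cancellation of the nonsingular $X$ on both sides are all exactly right (though only part (b) of Proposition \ref{prop:properties_Mobius} is actually used -- part (a) plays no role in the argument).
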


We establish in Theorem \ref{thm:singular_structure_Mobius} relationships between right and left minimal indices and bases of singular $\mathbf{M}_A$-structured matrix polynomials.
In particular, we show that the sets of right and left minimal indices of a singular $\mathbf{M}_A$-structured matrix polynomial are equal.
This theorem is a generalization of \cite[Theorems 3.4, 3.5 and 3.6]{singular} for $\mathbf{M}_A$-structured matrix polynomials.
In the proof of Theorem \ref{thm:singular_structure_Mobius}, we will use the following notation.
For any set of polynomial vectors $\mathcal{B}=\{x_1(\lambda),\hdots,x_p(\lambda) \}$ and any $A\in {\rm GL}(2,\mathbb{F})$, we denote by $\mathbf{M}_A[\mathcal{B}]$ the set $\{\mathbf{M}_A[x_1](\lambda),\hdots,\mathbf{M}_A[x_p](\lambda)  \}$, where each M\"obius transformation $\mathbf{M}_A[x_i](\lambda)$ is taken with respect to the degree of $x_i(\lambda)$, and by $\mathcal{\overline{B}}$ the set $\{  \overline{x}_1(\lambda),\hdots, \overline{x}_p(\lambda)\}$.
\begin{theorem}\label{thm:singular_structure_Mobius}
Let $A\in {\rm GL}(2,\mathbb{F})$ and let $P(\lambda)\in\mathbb{F}[\lambda]^{n\times n}$ be a singular $\mathbf{M}_A$-structured matrix polynomial.
Then, the sets of right and left minimal indices of $P(\lambda)$ are equal.
Furthermore, if $\{x_1(\lambda),\hdots,x_p(\lambda)\}$ is a  minimal basis for $\mathcal{N}_r(P)$, then $\{\mathbf{M}_{\overline{A}}[\overline{x}_1](\lambda),\hdots,\mathbf{M}_{\overline{A}}[\overline{x}_p](\lambda)  \}$ is a minimal basis for $\mathcal{N}_\ell(P)$ (modulo transposition).
\end{theorem}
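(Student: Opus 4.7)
The plan is to construct the candidate left-nullspace basis from $\{x_1,\ldots,x_p\}$ using the $\mathbf{M}_A$-structure and the properties listed in Proposition \ref{prop:properties_Mobius}, verify linear independence by invoking the invertibility of M\"obius transformations, and then establish minimality by matching upper bounds on sums of degrees obtained by running the same construction in both directions. The main subtle point will be this last step: M\"obius transformations preserve grades but may drop polynomial degrees (for instance when $a/c$ happens to be a root of some factor of $x_i$), so the equality of individual degrees has to be squeezed out of matching forward and backward bounds rather than verified directly.

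First, apply $\mathbf{M}_A$ to the identity $P(\lambda)x_i(\lambda) = 0$ at grade $g + d_i$, where $g = \deg P(\lambda)$ and $d_i = \deg x_i(\lambda)$. By part (b) of Proposition \ref{prop:properties_Mobius} combined with the defining relation $\mathbf{M}_A[P](\lambda) = P(\lambda)^\star$, this yields $P(\lambda)^\star \, \mathbf{M}_A[x_i](\lambda) = 0$. Taking the $(\cdot)^\star$-conjugate transpose, and using part (e) of the same proposition to identify $\overline{\mathbf{M}_A[x_i]}(\lambda) = \mathbf{M}_{\overline{A}}[\overline{x}_i](\lambda)$, produces the row-vector identity $\mathbf{M}_{\overline{A}}[\overline{x}_i](\lambda)^T P(\lambda) = 0$, so $\mathbf{M}_{\overline{A}}[\overline{x}_i]^T \in \mathcal{N}_\ell(P)$ for every $i$.

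Next, the $p$ vectors so obtained are $\mathbb{F}(\lambda)$-linearly independent: clearing denominators in any putative relation and then applying $\mathbf{M}_{(\overline{A})^{-1}}$, using parts (a), (b) and (g) of Proposition \ref{prop:properties_Mobius} together with the identity $\mathbf{M}_I = \mathrm{id}$, transports the relation back to a polynomial relation among the $\overline{x}_i$'s; conjugating then contradicts the linear independence of the minimal basis $\{x_1,\ldots,x_p\}$. Since $P(\lambda)$ is square and singular we have $\dim \mathcal{N}_\ell(P) = \dim \mathcal{N}_r(P) = p$, so the $p$ independent vectors $\{\mathbf{M}_{\overline{A}}[\overline{x}_i]^T\}_{i=1}^p$ already span $\mathcal{N}_\ell(P)$.

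Finally, since $\mathbf{M}_{\overline{A}}[\overline{x}_i]$ is built at grade $d_i$, one has $\deg \mathbf{M}_{\overline{A}}[\overline{x}_i] \le d_i$ for every $i$, and hence $\sum_{j=1}^p e_j \le \sum_{i=1}^p d_i$, where $e_1,\ldots,e_p$ denote the left minimal indices of $P(\lambda)$. Running exactly the same construction in the opposite direction, starting from any minimal basis $\{y_j^T\}_{j=1}^p$ of $\mathcal{N}_\ell(P)$ of degrees $e_j$ and applying $\mathbf{M}_{A^{-1}}$ to the conjugate-transposed identity $\mathbf{M}_A[P](\lambda)\,\overline{y}_j(\lambda) = 0$, produces a basis $\{\mathbf{M}_{A^{-1}}[\overline{y}_j]\}_{j=1}^p$ of $\mathcal{N}_r(P)$ whose degrees satisfy $\deg \mathbf{M}_{A^{-1}}[\overline{y}_j] \le e_j$, giving the reverse inequality $\sum_i d_i \le \sum_j e_j$. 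Combining the two forces $\sum_i d_i = \sum_j e_j$, and together with $\deg \mathbf{M}_{\overline{A}}[\overline{x}_i] \le d_i$ this forces the individual equalities $\deg \mathbf{M}_{\overline{A}}[\overline{x}_i] = d_i$ for all $i$. Therefore $\{\mathbf{M}_{\overline{A}}[\overline{x}_i]^T\}_{i=1}^p$ is a minimal basis of $\mathcal{N}_\ell(P)$, and the multiset of its degrees $\{d_1,\ldots,d_p\}$ coincides with the multiset of left minimal indices of $P(\lambda)$, as claimed.
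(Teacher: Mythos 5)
Your proof is correct, but it takes a genuinely different route from the paper's. The paper's proof is short: it observes that the left null-space data of $P$ is the right null-space data of $P^T=\mathbf{M}_{\overline{A}}[\overline{P}]$, and then invokes \cite[Theorem 7.5]{Mobius}, which directly asserts that a M\"obius transformation preserves right minimal indices and sends right minimal bases to right minimal bases. You instead re-derive this preservation statement from scratch, specialized to the present setting: you push each $x_i$ forward to $\mathbf{M}_{\overline A}[\overline{x}_i]$ (correctly using parts (b), (e), (g) of Proposition \ref{prop:properties_Mobius} and the $\mathbf{M}_A$-structure), establish linear independence and hence spanning, and then squeeze the individual degree equalities out of a forward/backward comparison of orders. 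The squeeze is where your argument does all of the work that the citation does in the paper's version. What each approach buys: the paper's proof is shorter and less error-prone but depends on an external black-box theorem; yours is self-contained and makes the mechanism visible, at the cost of length. One small remark: the degree-drop concern you flag is in fact vacuous here — for a vector $x_i$ of degree $d_i$ belonging to a minimal basis, Theorem \ref{thm:minimal_basis} forces both $x_i(\lambda_0)\neq 0$ for all $\lambda_0\in\overline{\FF}$ and a nonzero leading coefficient, and a short computation with $\mathbf{M}_A$ shows these two facts already imply $\deg\mathbf{M}_{\overline A}[\overline{x}_i]=d_i$ directly. Your squeeze avoids having to say this, which is fine, but it is not strictly necessary. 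The last sentence of your argument is slightly terse: the step from $\sum_i d_i=\sum_j e_j$ to the individual equalities implicitly uses that the order of your constructed polynomial basis is sandwiched between $\sum_j e_j$ (the minimal order) and $\sum_i d_i$; it would be worth writing that chain of inequalities explicitly.
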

\begin{proof}
Recall that left minimal indices and bases of $P(\lambda)$ can be computed as right minimal indices and bases of $P(\lambda)^T$.
Then, notice that the $\mathbf{M}_A$ structure of $P(\lambda)$ implies $P(\lambda)^T = \mathbf{M}_{\overline{A}}[\overline{P}](\lambda)$, that is, the left minimal indices of $P(\lambda)$ are equal to the right minimal indices of $\mathbf{M}_{\overline{A}}[\overline{P}](\lambda)$, and any left minimal basis of $P(\lambda)$ can be obtained as a right minimal basis of $\mathbf{M}_{\overline{A}}[\overline{P}](\lambda)$.
Clearly,  the right minimal indices of $P(\lambda)$ and $\overline{P}(\lambda)$ coincide, and if $\mathcal{B}$ is a right minimal basis for $\mathcal{N}_r(P)$, then $\mathcal{\overline{B}}$ is a right minimal basis for $\mathcal{N}_r(\overline{P})$.
Finally, from \cite[Theorem 7.5]{Mobius} together with the previous argument, we obtain that the sets of right minimal indices of $P(\lambda)$, $\overline{P}(\lambda)$ and $\mathbf{M}_{\overline{A}}[\overline{P}](\lambda)$ are equal, and that if $\mathcal{B}$ is a minimal basis for $\mathcal{N}_r(P)$, then $\mathcal{\overline{B}}$ is a basis for the right null space of $\overline{P}(\lambda)$, and, therefore, $\mathbf{M}_{\overline{A}}[\mathcal{\overline{B}}]$ is a basis for the right null space of $\mathbf{M}_{\overline{A}}[\overline{P}](\lambda)$.
\end{proof}

\medskip

Notice that the matrices in Table \ref{table:1} are coninvolutory (recall Definition \ref{def:involutory}).
For this class of matrices, we consider in the following two sections the problem of linearizing an $\mathbf{M}_A$-structured matrix polynomial of odd degree in a structure-preserving way.
To achieve this task, we need to introduce the concept of  $\mathbf{M}_A$-structured strong block minimal bases pencil, which is an important example of the recently introduced class of strong block minimal bases pencils \cite{minimal_pencils}. 

\section{Strong block minimal bases pencils and $\mathbf{M}_A$-structured strong block minimal bases pencils}\label{sec:minimal_bases_pencils}

In this section, we start reviewing the family of  strong block minimal bases pencils introduced in \cite{minimal_pencils} and, then, introduce the subfamily of $\mathbf{M}_A$-structured strong block minimal bases pencils.
\begin{definition}{\rm \cite[Definition 3.1]{minimal_pencils}} \label{def:minlinearizations} A matrix pencil
\begin{equation} \label{eq:minbaspencil}
\mathcal{L}(\lambda) = \begin{bmatrix} M(\lambda) & K_2 (\lambda)^T \\ K_1 (\lambda) & 0\end{bmatrix}
\end{equation}
is called a {\em block minimal bases pencil} if $K_1 (\lambda)$ and $K_2(\lambda)$ are both minimal bases.
If, in addition, the row degrees of $K_1 (\lambda)$ are all equal to $1$, the row degrees of $K_2 (\lambda)$ are all equal to $1$, the row degrees of a minimal basis dual to $K_1 (\lambda)$ are all equal, and the row degrees of a minimal basis dual to $K_2 (\lambda)$ are all equal, then $\mathcal{L}(\lambda)$ is called a {\em strong block minimal bases pencil}.
\end{definition}

Any (strong) block minimal bases pencil is a (strong) linearization of a certain matrix polynomial that can be expressed in terms of the pencil $M(\lambda)$ and any dual minimal bases of $K_1(\lambda)$ and $K_2(\lambda)$.
Moreover, the minimal indices of the strong block minimal bases pencil and the minimal indices of the matrix polynomial for which the pencil is a strong linearization are related by uniform shifts.
\begin{theorem}{\rm \cite[Theorems 3.3 and 3.7]{minimal_pencils}}\label{thm:blockminlin}  Let $K_1 (\lambda)$ and $N_1 (\lambda)$ be a pair of dual minimal bases, and let $K_2 (\lambda)$ and $N_2 (\lambda)$ be another pair of dual minimal bases. 
Consider the matrix polynomial
\begin{equation} \label{eq:Qpolinminbaslin}
Q(\lambda) := N_2(\lambda) M(\lambda) N_1(\lambda)^T,
\end{equation}
and the block minimal bases pencil $\mathcal{L}(\lambda)$ in \eqref{eq:minbaspencil}. 
Then:
\begin{enumerate}
\item[\rm (a)] $\mathcal{L}(\lambda)$ is a linearization of $Q(\lambda)$.
\item[\rm (b)] If $\mathcal{L}(\lambda)$ is a strong block minimal bases pencil, then $\mathcal{L}(\lambda)$ is a strong linearization of $Q(\lambda)$, considered as a polynomial with grade $1 + \deg(N_1 (\lambda)) + \deg(N_2 (\lambda))$.
\item[\rm (c1)] If $0 \leq \epsilon_1 \leq \epsilon_2 \leq \cdots \leq \epsilon_p$ are the right minimal indices of $Q(\lambda)$, then
\[
\epsilon_1 + \deg(N_1(\lambda)) \leq \epsilon_2 + \deg(N_1(\lambda)) \leq \cdots \leq \epsilon_p +  \deg(N_1(\lambda))
\]
are the right minimal indices of $\mathcal{L}(\lambda)$, when $\mathcal{L}(\lambda)$ is a strong block minimal bases pencil.
\item[\rm (c2)] If $0 \leq \eta_1 \leq \eta_2 \leq \cdots \leq \eta_q$ are the left minimal indices of $Q(\lambda)$, then
\[
\eta_1 + \deg(N_2(\lambda)) \leq \eta_2 + \deg(N_2(\lambda)) \leq \cdots \leq \eta_q +  \deg(N_2(\lambda))
\]
are the left minimal indices of $\mathcal{L}(\lambda)$, when $\mathcal{L}(\lambda)$ is a strong block minimal bases pencil.
\end{enumerate}
\end{theorem}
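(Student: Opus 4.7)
The plan is to establish part (a) by explicit unimodular equivalence, then use it to derive (b) by passing to reversals, and finally deduce (c1) and (c2) from the block structure of the equivalence.

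For part (a), the strategy is to exhibit unimodular polynomial matrices $U(\lambda)$ and $V(\lambda)$ such that $U(\lambda)\mathcal{L}(\lambda)V(\lambda)=\mathrm{diag}(I_s,Q(\lambda))$. The essential tool is that any minimal basis admits a \emph{unimodular completion}: because $K_i(\lambda_0)$ has full row rank for every $\lambda_0\in\overline{\mathbb{F}}$ and $K_i(\lambda)$ is row reduced (Theorem \ref{thm:minimal_basis}), one can extend $K_i(\lambda)$ by appending rows whose span complements $K_i(\lambda)$ pointwise, and the dual minimal basis $N_i(\lambda)^T$ provides exactly such a complement (modulo a unimodular change of coordinates). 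I would therefore build the completions so that $\bigl[\, N_i(\lambda)^T \;\big|\; \widehat K_i(\lambda)^T\,\bigr]$ are unimodular for $i=1,2$, then sandwich $\mathcal{L}(\lambda)$ between block matrices whose off-diagonal entries are chosen to clear the $M(\lambda)$ block against $K_1$ and $K_2^T$. After the dust settles, the middle product that survives in the remaining diagonal block is precisely $N_2(\lambda)M(\lambda)N_1(\lambda)^T=Q(\lambda)$, while the identity block absorbs the rest.

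For part (b), my idea is to observe that $\rev_1\mathcal{L}(\lambda)$ is itself a strong block minimal bases pencil, associated to $\rev_1 K_1(\lambda)$ and $\rev_1 K_2(\lambda)$. Since the row degrees of each $K_i(\lambda)$ are all equal to $1$, the reversals $\rev_1 K_i(\lambda)$ are again constant-row-degrees minimal bases, and their duals are $\rev_{t_i}N_i(\lambda)$, where $t_i=\deg N_i(\lambda)$; this last point uses the \emph{constant-row-degrees} hypothesis crucially, because otherwise reversal would not preserve minimality on the dual side. Applying part (a) to $\rev_1\mathcal{L}(\lambda)$ identifies it as a linearization of $\rev_{t_2}N_2(\lambda)\cdot\rev_1 M(\lambda)\cdot\rev_{t_1}N_1(\lambda)^T=\rev_{1+t_1+t_2}Q(\lambda)$, which is exactly the condition needed to upgrade part (a) to a strong linearization of $Q(\lambda)$ considered with grade $1+\deg N_1(\lambda)+\deg N_2(\lambda)$.

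For parts (c1) and (c2), the unimodular equivalence from part (a) gives a degree-preserving bijection between $\mathcal{N}_r(\mathcal{L})$ and $\mathcal{N}_r(Q)$, but one needs to track how it acts on degrees. The plan is to show that if $\{x_1(\lambda),\ldots,x_p(\lambda)\}$ is a minimal basis of $\mathcal{N}_r(Q)$ with degrees $\epsilon_1\le\cdots\le\epsilon_p$, then $\left\{\begin{bmatrix} N_1(\lambda)^T x_j(\lambda)\\ -\ast_j(\lambda)\end{bmatrix}\right\}_{j=1}^p$, with the second block chosen so that $K_2^T(\lambda)\ast_j(\lambda)=M(\lambda)N_1(\lambda)^Tx_j(\lambda)$, is a minimal basis of $\mathcal{N}_r(\mathcal{L})$ with degrees $\epsilon_j+\deg N_1(\lambda)$. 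The fact that the shift is exact (not merely an upper bound) and that the lifted basis remains minimal is the technical heart, and I would verify it using the full-row-rank-at-every-point and row-reducedness criteria of Theorem \ref{thm:minimal_basis}, together with the fact that $N_1(\lambda)$ has all its row degrees equal, which prevents any degree cancellation. A symmetric argument with $N_2$ proves (c2). The main obstacle I anticipate is this last verification: ensuring that the uniform shift $\deg N_i(\lambda)$ is realized exactly on each minimal index, which forces careful use of the constant-row-degrees assumption for both $K_i$ and its dual $N_i$.
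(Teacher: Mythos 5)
The paper does not actually prove this theorem --- it is quoted from \cite[Theorems~3.3 and 3.7]{minimal_pencils} and used as a black box --- so there is no in-paper proof to compare against. That said, your sketch follows the strategy of the proof given in \cite{minimal_pencils}: part~(a) via an explicit unimodular equivalence built from completions of the minimal bases, part~(b) by recognizing $\rev_1\mathcal{L}(\lambda)$ as another strong block minimal bases pencil whose blocks are $\rev_1 K_i(\lambda)$ with duals $\rev_{t_i}N_i(\lambda)$, and parts~(c1)--(c2) by explicitly lifting minimal bases of $\mathcal{N}_r(Q)$ and $\mathcal{N}_\ell(Q)$.

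One correction and one caution. The claim that ``the dual minimal basis $N_i(\lambda)^T$ provides exactly such a complement'' to $K_i(\lambda)$ is false as stated: appending $N_i(\lambda)$ to $K_i(\lambda)$ does \emph{not} in general give a unimodular matrix. Taking $K_1=L_1(\lambda)$ and $N_1=\Lambda_1(\lambda)^T$ from \eqref{eq:Lk}--\eqref{eq:Lambda} gives $\left[\begin{smallmatrix} -1 & \lambda \\ \lambda & 1 \end{smallmatrix}\right]$, whose determinant $-(1+\lambda^2)$ is not constant. The correct and nontrivial fact, established as a lemma in \cite{minimal_pencils}, is that one can construct $\widehat K_i,\widehat N_i$ so that $\left[\begin{smallmatrix}K_i(\lambda)\\ \widehat N_i(\lambda)\end{smallmatrix}\right]$ is unimodular with inverse $\left[\begin{smallmatrix}\widehat K_i(\lambda)^T & N_i(\lambda)^T\end{smallmatrix}\right]$; building these completions and then verifying that the sandwiched product collapses to $\mathrm{diag}(I_s,\,N_2 M N_1^T)$ is where essentially all the work lies, and your sketch defers it to a parenthetical phrase. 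Your part~(b) reasoning is sound: full row rank of $K_i(0)$ and $N_i(0)$ (which holds because they are minimal bases, by Theorem~\ref{thm:minimal_basis}) supplies the full-rank leading coefficients of the reversals, so Lemma~\ref{lemma:constant-row-degrees} shows $\rev_1 K_i$ and $\rev_{t_i}N_i$ are again constant-row-degrees dual minimal bases and part~(a) applies to $\rev_1\mathcal{L}$. The lift in (c) is the right one, but the step you yourself flag as ``the technical heart'' --- minimality of the lifted basis and exactness of the shift by $\deg N_i$ --- is the real content of those parts and would need a genuine argument, not merely the qualitative remark that constant row degrees prevent cancellation.
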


For any $A\in {\rm GL}(2,\mathbb{F})$, part-(b) in Theorem \ref{thm:minimal_basis_Mobius} suggests that we may take $K_2(\lambda)=\mathbf{M}_{\overline{A}}[\overline{K}_1](\lambda)$ and $N_2(\lambda)=\mathbf{M}_{\overline{A}}[\overline{N}_1](\lambda)$ as the second pair of dual minimal bases in Definition \ref{def:minlinearizations} and Theorem \ref{thm:blockminlin}.
This motivates the concept of an $\mathbf{M}_A$-structured strong block minimal bases pencil, which is introduced in the following definition.
\begin{definition}\label{def:structured-minimal-bases-pencil}
Let $K(\lambda),N(\lambda)$ be a pair of dual minimal bases, with all the row degrees of $K(\lambda)$ equal to 1 and with all the row degrees of $N(\lambda)$ equal, and let $A\in {\rm GL}(2,\mathbb{F})$ be a  coninvolutory matrix.
Then, an $\mathbf{M}_A$-structured matrix pencil of the form
\begin{equation}\label{eq:structured-minimal-bases-pencil}
\mathcal{L}(\lambda)=\begin{bmatrix}
M(\lambda) & \mathbf{M}_A[K](\lambda)^\star \\
K(\lambda) & 0 
\end{bmatrix} = \begin{bmatrix}
M(\lambda) & \mathbf{M}_{\overline{A}}[\overline{K}](\lambda)^T \\
K(\lambda) & 0
\end{bmatrix} \,\, \mbox{with} \quad \mathbf{M}_A[M](\lambda)=M(\lambda)^\star,
\end{equation}
is called a  \emph{$\mathbf{M}_A$-structured strong block minimal bases pencil}.
\end{definition}

\begin{remark}
Notice that any $\mathbf{M}_A$-structured strong block  minimal bases pencil $\mathcal{L}(\lambda)$ as in \eqref{eq:structured-minimal-bases-pencil} is, indeed, $\mathbf{M}_A$-structured, that is, $\mathbf{M}_A[\mathcal{L}](\lambda)=\mathcal{L}(\lambda)^\star$ holds as a consequence of $A$ being coninvolutory.
\end{remark}

An immediate corollary of Theorem \ref{thm:blockminlin} is that any $\mathbf{M}_A$-structured strong block minimal bases pencil  is always a strong linearization of a certain odd-grade $\mathbf{M}_A$-structured matrix polynomial.
Furthermore, the minimal indices of this polynomial and the pencil are related by a uniform shift. 
These results are stated and proved in the following theorem.
We only focus on right minimal indices, since the set of right minimal indices and the set of left minimal indices of an $\mathbf{M}_A$-structured matrix polynomial are equal (recall Theorem \ref{thm:singular_structure_Mobius}).
\begin{theorem}\label{thm:structured-minimal-basis-pencil}
Let $K(\lambda),N(\lambda)$ be a pair of dual minimal bases, with all the row degrees of $K(\lambda)$ equal to 1 and with all the row degrees of $N(\lambda)$ equal, let $A\in {\rm GL}(2,\mathbb{F})$ be a coninvolutory matrix, and let $\mathcal{L}(\lambda)$ be an $\mathbf{M}_A$-structured strong block minimal bases pencil as in \eqref{eq:structured-minimal-bases-pencil}.
Then, the pencil $\mathcal{L}(\lambda)$ is a strong linearization of the $\mathbf{M}_A$-structured matrix polynomial
\begin{equation}\label{eq:structured-matrix-equation}
Q(\lambda):=\mathbf{M}_{\overline{A}}[\overline{N}](\lambda)M(\lambda)N(\lambda)^T,
\end{equation}
of grade $2\deg(N(\lambda))+1$.
Moreover, if $0\leq \epsilon_1\leq \epsilon_2\leq \cdots \leq \epsilon_p$ are the right minimal indices of $Q(\lambda)$, then
\[
\epsilon_1+\deg(N(\lambda))\leq \epsilon_2+\deg(N(\lambda))\leq \cdots \leq \epsilon_p+\deg(N(\lambda))
\]
are the right minimal indices of $\mathcal{L}(\lambda)$.
\end{theorem}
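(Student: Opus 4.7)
The plan is to deduce the result essentially as a corollary of the unstructured Theorem \ref{thm:blockminlin}, by identifying the second pair of dual minimal bases required in that theorem as the M\"obius-and-conjugate transformed copies of the first pair, and then separately verifying the $\mathbf{M}_A$-structure of the polynomial $Q(\lambda)$ produced.

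First I would instantiate Theorem \ref{thm:blockminlin} with $K_1(\lambda)=K(\lambda)$, $N_1(\lambda)=N(\lambda)$, $K_2(\lambda)=\mathbf{M}_{\overline{A}}[\overline{K}](\lambda)$, and $N_2(\lambda)=\mathbf{M}_{\overline{A}}[\overline{N}](\lambda)$. To apply that theorem I must check that $(K_2,N_2)$ are a pair of dual minimal bases whose row-degree profiles match the block minimal bases structure: the pair $(\overline{K},\overline{N})$ is obviously dual minimal with the same constant row degrees as $(K,N)$ (conjugation acts entrywise), and then Theorem \ref{thm:minimal_basis_Mobius} lifts this to $(\mathbf{M}_{\overline{A}}[\overline{K}],\mathbf{M}_{\overline{A}}[\overline{N}])$, preserving the row degrees $1$ and $\deg(N(\lambda))$ respectively. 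Consequently $\mathcal{L}(\lambda)$ in \eqref{eq:structured-minimal-bases-pencil} is a strong block minimal bases pencil, and parts (a)--(b) of Theorem \ref{thm:blockminlin} yield that $\mathcal{L}(\lambda)$ is a strong linearization of $Q(\lambda)=\mathbf{M}_{\overline{A}}[\overline{N}](\lambda)M(\lambda)N(\lambda)^T$, taken with grade $1+\deg(N_1)+\deg(N_2)=1+2\deg(N(\lambda))$. The minimal-index shift stated in the theorem is then an immediate application of part (c1) of Theorem \ref{thm:blockminlin}, since $\deg(N_1(\lambda))=\deg(N(\lambda))$.

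It remains to show that $Q(\lambda)$ is $\mathbf{M}_A$-structured, and this is the only step that is not a direct citation. I would compute $\mathbf{M}_A[Q](\lambda)$ by applying parts (b), (g), (d) of Proposition \ref{prop:properties_Mobius} in turn:
\[
\mathbf{M}_A[Q](\lambda)=\mathbf{M}_{\overline{A}A}[\overline{N}](\lambda)\,\mathbf{M}_A[M](\lambda)\,\mathbf{M}_A[N](\lambda)^T.
\]
The coninvolutory hypothesis $A\overline{A}=I_2$ forces $\overline{A}=A^{-1}$, hence $\overline{A}A=I_2$ as well, so $\mathbf{M}_{\overline{A}A}[\overline{N}](\lambda)=\overline{N}(\lambda)$; together with $\mathbf{M}_A[M](\lambda)=M(\lambda)^\star$, this gives $\mathbf{M}_A[Q](\lambda)=\overline{N}(\lambda)M(\lambda)^\star\mathbf{M}_A[N](\lambda)^T$. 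On the other hand, taking $(\cdot)^\star$ of $Q(\lambda)$ and using $(N^T)^\star=\overline{N}$ (in both the real and complex cases) together with Proposition \ref{prop:properties_Mobius}(e) to rewrite $\mathbf{M}_{\overline{A}}[\overline{N}](\lambda)^\star=\mathbf{M}_A[N](\lambda)^T$, one obtains the same expression $\overline{N}(\lambda)M(\lambda)^\star\mathbf{M}_A[N](\lambda)^T$ for $Q(\lambda)^\star$. The two coincide, so $Q(\lambda)$ is $\mathbf{M}_A$-structured.

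The only delicate point, and the place where the coninvolutory hypothesis is genuinely used, is the bookkeeping in the third paragraph: one must keep track of three different operations (the bar, the $\star$, and $\mathbf{M}_A$) and how they compose, and verify that the composition $\mathbf{M}_A\circ\mathbf{M}_{\overline{A}}$ collapses to the identity. Everything else is a mechanical translation of Theorems \ref{thm:blockminlin} and \ref{thm:minimal_basis_Mobius} into the structured setting.
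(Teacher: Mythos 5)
Your proof is correct and follows essentially the same route as the paper: apply Theorem \ref{thm:minimal_basis_Mobius} to obtain the second pair of dual minimal bases $(\mathbf{M}_{\overline{A}}[\overline{K}],\mathbf{M}_{\overline{A}}[\overline{N}])$, invoke Theorem \ref{thm:blockminlin} for the strong linearization and the minimal-index shift, and then verify the $\mathbf{M}_A$-structure of $Q(\lambda)$ by a direct computation with Proposition \ref{prop:properties_Mobius} (parts (b), (d), (e), (g)) and the coninvolutory identity $A\overline{A}=I_2$. The only cosmetic difference is that you compute $\mathbf{M}_A[Q](\lambda)$ and $Q(\lambda)^\star$ independently and compare them, whereas the paper chains the equalities from $\mathbf{M}_A[Q](\lambda)$ directly to $Q(\lambda)^\star$; both come to the same thing.
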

\begin{proof} 
Notice that $K(\lambda)N(\lambda)^T=0$ implies $\overline{K}(\lambda)\overline{N}(\lambda)^T=0$.
Since the operation $P(\lambda)\rightarrow \overline{P}(\lambda)$ applied to $K(\lambda)$ and $N(\lambda)$ does not change neither the rank of the polynomial at any $\lambda_0\in\overline{\mathbb{F}}$, nor the degree of any of its entries, we have that $\overline{K}(\lambda)$ and $\overline{N}(\lambda)$ are a pair of dual minimal bases with all the row degrees of $\overline{K}(\lambda)$ equal to 1, and all the row  degrees of $\overline{N}(\lambda)$ equal to $\deg(N(\lambda))$. 
Then, from Theorem \ref{thm:minimal_basis_Mobius}, we obtain that $\mathbf{M}_{\overline{A}}[\overline{K}](\lambda)$ and $\mathbf{M}_{\overline{A}}[\overline{N}](\lambda)$ are also a pair of dual minimal bases with all the row degrees of $\mathbf{M}_{\overline{A}}[\overline{K}](\lambda)$ equal to 1, and all the row degrees of $\mathbf{M}_{\overline{A}}[\overline{N}](\lambda)$ equal to $\deg(N(\lambda))$.
Therefore, the pencil $\mathcal{L}(\lambda)$ is a strong block minimal bases pencil. 
From  Theorem \ref{thm:blockminlin}, we immediately obtain that $\mathcal{L}(\lambda)$ is a strong linearization of $Q(\lambda)$ and that the minimal indices of $\mathcal{L}(\lambda)$ are those of $Q(\lambda)$ shifted by $\deg(N(\lambda))$.

We still have to show that $Q(\lambda)$ is an $\mathbf{M}_A$-structured matrix polynomial.
Computing the M\"obius transformation of $Q(\lambda)$ associated with the matrix $A$ and using that the matrix $A$ is coninvolutory, together with parts (b), (d), (e) and (g) in Proposition \ref{prop:properties_Mobius}, we get
\begin{align*}
\mathbf{M}_A[Q](\lambda) =& \mathbf{M}_A\left[\mathbf{M}_{\overline{A}}[\overline{N}]MN^T\right](\lambda)\\  =&
 \mathbf{M}_A\left[  \mathbf{M}_{\overline{A}}[\overline{N}] \right](\lambda)\,\, \mathbf{M}_A[M](\lambda)\,\, \mathbf{M}_A[N^T](\lambda) \\ =&
 \overline{N}(\lambda)M(\lambda)^\star  \mathbf{M}_A[N](\lambda)^T =
 \left( N(\lambda)^T \right)^\star M(\lambda)^\star  \mathbf{M}_{\overline{A}}[\overline{N}](\lambda)^\star = Q(\lambda)^\star.
\end{align*}
Thus, the matrix polynomial $Q(\lambda)$ is $\mathbf{M}_A$-structured.
\end{proof}

Theorem \ref{thm:structured-minimal-basis-pencil} tells us that given an $\mathbf{M}_A$-structured strong block minimal bases pencil, this pencil is a strong linearization of a certain odd-grade $\mathbf{M}_A$-structured matrix polynomial.
We now address the inverse problem, that is, given an odd-grade $\mathbf{M}_A$-structured matrix polynomial $Q(\lambda)$, we want to construct an $\mathbf{M}_A$-structured strong linearization for the given matrix polynomial.
To achieve this, the polynomial equation \eqref{eq:structured-matrix-equation} has to be solved for an $\mathbf{M}_A$-structured pencil $M(\lambda)$.
This problem is addressed in Theorem \ref{thm:solving_M}.
But, first, we notice that if we do not impose the $\mathbf{M}_A$-structure on the pencil $M(\lambda)$, it is possible to prove that the polynomial equation \eqref{eq:structured-matrix-equation} is always consistent, with infinitely many solutions, as a consequence of the properties of the minimal basis $N(\lambda)$.
This result will be proved in \cite{ell-ifications}, in a much more general setting.
\begin{theorem}\label{thm:solving_M}
Let $K(\lambda),N(\lambda)$ be a pair of dual minimal bases, with all the row degrees of $K(\lambda)$ equal to 1 and with all the row degrees of $N(\lambda)$ equal, let $A\in {\rm GL}(2,\mathbb{F})$ be a coninvolutory matrix.
Let $Q(\lambda)$ be a given $\mathbf{M}_A$-structured matrix polynomial  with grade equal to $2\deg(N(\lambda))+1$. 
If $\widehat{M}(\lambda)$ is any solution of the polynomial equation \eqref{eq:structured-matrix-equation} (not necessarily $\mathbf{M}_A$-structured), then the pencil
\[
M(\lambda) :=\frac{1}{2}\left( \widehat{M}(\lambda)+\mathbf{M}_A[\widehat{M}](\lambda)^\star \right)
\]
is an $\mathbf{M}_A$-structured solution of \eqref{eq:structured-matrix-equation}, and the $\mathbf{M}_A$-structured strong block minimal bases pencil
\[
 \mathcal{L} (\lambda) :=
 \begin{bmatrix}
M(\lambda) & \mathbf{M}_A[K](\lambda)^\star \\
K(\lambda) & 0
\end{bmatrix} =
\begin{bmatrix}
\frac{1}{2}\left( \widehat{M}(\lambda)+\mathbf{M}_A[\widehat{M}](\lambda)^\star\right) & \mathbf{M}_A[K](\lambda)^\star \\
K(\lambda) & 0
\end{bmatrix}
\]
is an $\mathbf{M}_A$-structured strong linearization of $Q(\lambda)$.
\end{theorem}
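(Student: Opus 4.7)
The plan is to decompose the proof into three pieces corresponding to the three claims of the theorem: that $M(\lambda)$ is $\mathbf{M}_A$-structured, that $M(\lambda)$ still solves \eqref{eq:structured-matrix-equation}, and that the resulting pencil $\mathcal{L}(\lambda)$ is an $\mathbf{M}_A$-structured strong linearization of $Q(\lambda)$.

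First I would verify that $\mathbf{M}_A[M](\lambda)=M(\lambda)^\star$. Applying $\mathbf{M}_A$ to the first summand in the definition of $M(\lambda)$ gives $\mathbf{M}_A[\widehat{M}](\lambda)$ directly. For the second summand, I would use part (e) of Proposition \ref{prop:properties_Mobius} to rewrite $\mathbf{M}_A[\widehat{M}](\lambda)^\star=\mathbf{M}_{\overline{A}}[\widehat{M}^\star](\lambda)$, then use part (g) to obtain $\mathbf{M}_A[\mathbf{M}_{\overline{A}}[\widehat{M}^\star]](\lambda)=\mathbf{M}_{\overline{A}A}[\widehat{M}^\star](\lambda)$. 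Because $A$ is coninvolutory, $\overline{A}A=I_2$, and $\mathbf{M}_{I_2}$ is the identity transformation on matrix polynomials, so this collapses to $\widehat{M}(\lambda)^\star$. The identity $\mathbf{M}_A[M](\lambda)=\tfrac12\bigl(\mathbf{M}_A[\widehat{M}](\lambda)+\widehat{M}(\lambda)^\star\bigr)$ then matches $M(\lambda)^\star=\tfrac12\bigl(\widehat{M}(\lambda)^\star+\mathbf{M}_A[\widehat{M}](\lambda)\bigr)$, as required.

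Next I would check that $M(\lambda)$ solves \eqref{eq:structured-matrix-equation}. Since $\widehat{M}(\lambda)$ is assumed to be a solution, by the linearity of $\widehat{M}\mapsto \mathbf{M}_{\overline{A}}[\overline{N}]\,\widehat{M}\,N^T$ it suffices to show that $\mathbf{M}_A[\widehat{M}](\lambda)^\star$ is also a solution. To this end I would apply $(\cdot)^\star$ to the equation $Q(\lambda)=\mathbf{M}_{\overline{A}}[\overline{N}](\lambda)\widehat{M}(\lambda)N(\lambda)^T$ and replace the left-hand side $Q(\lambda)^\star$ by $\mathbf{M}_A[Q](\lambda)$ using that $Q(\lambda)$ is $\mathbf{M}_A$-structured. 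Then I would apply $\mathbf{M}_{\overline{A}}$ to both sides of the resulting identity and simplify using parts (b), (d), (e) and (g) of Proposition \ref{prop:properties_Mobius}: the left-hand side collapses to $\mathbf{M}_{A\overline{A}}[Q](\lambda)=Q(\lambda)$, while on the right-hand side the outer factor becomes $\mathbf{M}_{\overline{A}}[\overline{N}](\lambda)$, the middle factor becomes $\mathbf{M}_A[\widehat{M}](\lambda)^\star$, and the rightmost factor collapses back to $N(\lambda)^T$, again using $A\overline{A}=\overline{A}A=I_2$. Averaging this with the original identity yields $\mathbf{M}_{\overline{A}}[\overline{N}](\lambda)M(\lambda)N(\lambda)^T=Q(\lambda)$.

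Finally, with the $\mathbf{M}_A$-structured pencil $M(\lambda)$ now known to satisfy \eqref{eq:structured-matrix-equation}, the pencil $\mathcal{L}(\lambda)$ constructed in the statement fits verbatim the template of Definition \ref{def:structured-minimal-bases-pencil}, and Theorem \ref{thm:structured-minimal-basis-pencil} delivers at once that $\mathcal{L}(\lambda)$ is an $\mathbf{M}_A$-structured strong linearization of $Q(\lambda)$. The main technical obstacle I anticipate is the bookkeeping in the second step: one must chain the product rule, transpose rule, $\star$-conjugation rule and composition rule for M\"obius transformations in the correct order and apply the coninvolutivity of $A$ precisely at the two points where nested compositions $\mathbf{M}_A\circ\mathbf{M}_{\overline{A}}$ need to collapse to the identity. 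Once these simplifications are organized, every remaining statement is a direct invocation of results already proved in Sections \ref{sec:Mobius} and \ref{sec:minimal_bases_pencils}.
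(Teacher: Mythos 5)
Your proposal is correct and takes essentially the same route as the paper: both show that $\mathbf{M}_A[\widehat{M}](\lambda)^\star$ is a solution of \eqref{eq:structured-matrix-equation} via properties (b), (d), (e), (g) of Proposition \ref{prop:properties_Mobius} together with coninvolutivity, pass to the affine combination $M(\lambda)$, and invoke Theorem \ref{thm:structured-minimal-basis-pencil}. The only (harmless) differences are cosmetic: you apply $(\cdot)^\star$ first and then $\mathbf{M}_{\overline{A}}$ where the paper applies $\mathbf{M}_A$ first and then $(\cdot)^\star$ in a single chain, and you spell out the verification that $M(\lambda)$ is $\mathbf{M}_A$-structured, which the paper takes for granted.
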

\begin{proof}
By using that $Q(\lambda)$ is an $\mathbf{M}_A$-structured matrix polynomial, that $A$ is coninvolutory, and that $\widehat{M}(\lambda)$ is a solution of \eqref{eq:structured-matrix-equation}, together with parts (b), (d), (e) and (g) in Proposition \ref{prop:properties_Mobius}, we have
\begin{align*}
Q(\lambda)=&\left(\mathbf{M}_A[Q](\lambda)\right)^\star = \left(\mathbf{M}_A\left[ \mathbf{M}_{\overline{A}}[\overline{N}]\widehat{M}N^T \right](\lambda)\right)^\star \\=&
\left( \overline{N}(\lambda)\mathbf{M}_A[\widehat{M}](\lambda)\mathbf{M}_A[N](\lambda)^T \right)^\star  \\=
&\left( \left( N(\lambda)^T \right)^\star \mathbf{M}_A[\widehat{M}](\lambda) \mathbf{M}_{\overline{A}}[\overline{N}](\lambda)^\star  \right)^\star  \\=
&\mathbf{M}_{\overline{A}}[\overline{N}](\lambda)\, \mathbf{M}_A[\widehat{M}](\lambda)^\star \,N(\lambda)^T.
\end{align*}
Thus, the matrix pencil $\mathbf{M}_A[\widehat{M}](\lambda)^\star$ is also a solution of \eqref{eq:structured-matrix-equation}.
Moreover, since any affine combination of solutions of \eqref{eq:structured-matrix-equation} is also a solution, the $\mathbf{M}_A$-structured matrix pencil $(\widehat{M}(\lambda)+\mathbf{M}_A[\widehat{M}](\lambda)^\star)/2$ satisfies \eqref{eq:structured-matrix-equation}.
Therefore, by Theorem \ref{thm:structured-minimal-basis-pencil}, the $\mathbf{M}_A$-structured pencil $\mathcal{L}(\lambda)$ is a strong linearization of the matrix polynomial $Q(\lambda)$.
\end{proof}

Despite its consistency, the polynomial equation \eqref{eq:structured-matrix-equation} might be very difficult to solve for an arbitrary minimal basis $N(\lambda)$ and an arbitrary coninvolutory matrix $A$.
However, for some choices of $N(\lambda)$, when the matrix $A$ is any of those in Table \ref{table:1}, this problem turns out to be particularly simple. 
This is the subject of the following section.

\section{$\mathbf{M}_A$-structured block Kronecker pencils and structure-preserving strong linearizations}\label{sec:classical_structures}

We focus in this section on the problem of constructing explicitly structure-preserving strong linearizations  for (skew-)symmetric, (anti-)\break palindromic, and alternating matrix polynomials from some subfamilies of $\mathbf{M}_A$-\break structured strong block minimal bases pencils.
This problem has been addressed in \cite{PartI} with a lot of detail for (skew-)symmetric matrix polynomials and will be addressed in \cite{PartII,PartIII} for (anti-)palindromic and alternating matrix polynomials, so we only review some of the most important results and show some illuminating examples.

We start by introducing the family of $\mathbf{M}_A$-structured block Kronecker pencils, which are particular but important examples of $\mathbf{M}_A$-structured strong block minimal bases pencils.
\begin{definition}\label{def:structured-Kron-pencil}
Let $L_k(\lambda)\in\mathbb{F}[\lambda]^{k\times(k+1)}$ be the matrix pencil in \eqref{eq:Lk}, and let $A\in {\rm GL}(2,\mathbb{F})$ be a  coninvolutory matrix.
Then, a pencil of the form
\begin{equation}\label{eq:structured-Kron-pencil}
\mathcal{L}(\lambda)=\begin{bmatrix}
M(\lambda) & \mathbf{M}_A[L_k](\lambda)^\star\otimes I_n \\
L_k(\lambda)\otimes I_n & 0 
\end{bmatrix} \quad \mbox{with} \quad \mathbf{M}_A[M](\lambda)=M(\lambda)^\star,
\end{equation}
is called a \emph{$\mathbf{M}_A$-structured block Kronecker pencil}.
Moreover, the partition of $\mathcal{L}(\lambda)$ into $2\times 2$ blocks in \eqref{eq:structured-Kron-pencil} is called the \emph{natural partition} of an $\mathbf{M}_A$-structured block Kronecker pencil.
\end{definition}
\begin{remark}
The name $\mathbf{M}_A$-structured block Kronecker pencil is motivated, first, by the fact that one of the building blocks of $\mathcal{L}(\lambda)$ is the Kronecker product of a singular block of the Kronecker canonical form of pencils with the identity (as for block Kronecker pencils in \cite{minimal_pencils}), and that the pencil $\mathcal{L}(\lambda)$ is an $\mathbf{M}_A$-structured pencil, i.e. , $\mathbf{M}_A[\mathcal{L}](\lambda)=\mathcal{L}(\lambda)^\star$.
\end{remark}

As an immediate corollary of Theorem \ref{thm:structured-minimal-basis-pencil}, we obtain that any $\mathbf{M}_A$-structured block Kronecker pencil $\mathcal{L}(\lambda)$ is a strong linearization of an $\mathbf{M}_A$-structured matrix polynomial, and that the minimal indices of this polynomial and $\mathcal{L}(\lambda)$ are related by a uniform shift.
\begin{theorem}\label{thm:structured-block_Kronecker_pencils}
Let $\mathcal{L}(\lambda)$ be an $\mathbf{M}_A$-structured block Kronecker pencil  as in \eqref{eq:structured-Kron-pencil}.
Then, the pencil $\mathcal{L}(\lambda)$ is a strong linearization of the $\mathbf{M}_A$-structured matrix polynomial
\begin{equation}\label{eq:structured-matrix-equation-Kron}
P(\lambda):= \left( \mathbf{M}_{A}[\Lambda_k](\lambda)^\star\otimes I_n \right)M(\lambda)
\left( \Lambda_k(\lambda)\otimes I_n \right),
\end{equation}
of grade $2k+1$, where $\Lambda_k(\lambda)$ is the vector polynomial defined in \eqref{eq:Lambda}.
Moreover, the left minimal indices of $\mathcal{L}(\lambda)$ are those of $P(\lambda)$ increased by $k$, and the right minimal indices of $\mathcal{L}(\lambda)$ are those of $P(\lambda)$ increased also by $k$.
\end{theorem}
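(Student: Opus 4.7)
The plan is to recognize that an $\mathbf{M}_A$-structured block Kronecker pencil is nothing but the particular instance of an $\mathbf{M}_A$-structured strong block minimal bases pencil obtained from the dual minimal bases pair of Example \ref{ex-L-Lamb}, and then to invoke Theorem \ref{thm:structured-minimal-basis-pencil}. Concretely, I would set $K(\lambda):=L_k(\lambda)\otimes I_n$ and $N(\lambda):=\Lambda_k(\lambda)^T\otimes I_n$. By Example \ref{ex-L-Lamb} these are dual minimal bases, with all row degrees of $K(\lambda)$ equal to $1$ and all row degrees of $N(\lambda)$ equal to $k$; hence every $\mathbf{M}_A$-structured block Kronecker pencil \eqref{eq:structured-Kron-pencil} is exactly a pencil of the form \eqref{eq:structured-minimal-bases-pencil}.

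Applying Theorem \ref{thm:structured-minimal-basis-pencil} directly, with $\deg(N(\lambda))=k$, yields that $\mathcal{L}(\lambda)$ is a strong linearization of the $\mathbf{M}_A$-structured matrix polynomial
\[
Q(\lambda)=\mathbf{M}_{\overline{A}}[\overline{N}](\lambda)\,M(\lambda)\,N(\lambda)^T
\]
of grade $2k+1$, and that the right minimal indices of $\mathcal{L}(\lambda)$ are those of $Q(\lambda)$ shifted by $k$. The only genuine task remaining is to verify the identity $Q(\lambda)=P(\lambda)$, with $P(\lambda)$ as in \eqref{eq:structured-matrix-equation-Kron}. Since $\Lambda_k(\lambda)$ has integer (hence real) coefficients, $\overline{N}(\lambda)=\Lambda_k(\lambda)^T\otimes I_n$ and $N(\lambda)^T=\Lambda_k(\lambda)\otimes I_n$. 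Using parts (c), (d) and (e) of Proposition \ref{prop:properties_Mobius} together with the reality of $\Lambda_k(\lambda)$, one has
\[
\mathbf{M}_{\overline{A}}[\overline{N}](\lambda)=\mathbf{M}_{\overline{A}}[\Lambda_k^T](\lambda)\otimes I_n=\mathbf{M}_{\overline{A}}[\Lambda_k^\star](\lambda)\otimes I_n=\mathbf{M}_A[\Lambda_k](\lambda)^\star\otimes I_n,
\]
so substituting into the expression for $Q(\lambda)$ gives exactly $P(\lambda)$.

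For the statement about minimal indices, Theorem \ref{thm:structured-minimal-basis-pencil} provides the shift by $k$ for the right minimal indices. To obtain the same shift for the left minimal indices I would invoke Theorem \ref{thm:singular_structure_Mobius}, which tells us that the sets of right and left minimal indices of any $\mathbf{M}_A$-structured matrix polynomial coincide; applying this simultaneously to $P(\lambda)$ and to $\mathcal{L}(\lambda)$ (both are $\mathbf{M}_A$-structured) transfers the shift-by-$k$ conclusion from the right minimal indices to the left minimal indices.

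There is no real obstacle here: the work is essentially a verification that the building blocks match the hypotheses of Theorem \ref{thm:structured-minimal-basis-pencil}, and a short computation with M\"obius-transformation identities to identify $Q(\lambda)$ with $P(\lambda)$. The most delicate point is the careful juggling of the operations $\overline{(\cdot)}$, transpose, and $(\cdot)^\star$ together with the replacement $A\leftrightarrow\overline{A}$ in the M\"obius transformation, but this is handled cleanly once one observes that $\Lambda_k(\lambda)$ is real, so conjugation is trivial and $\Lambda_k^\star=\Lambda_k^T$.
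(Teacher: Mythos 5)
Your proposal is correct and follows essentially the same route as the paper, which simply presents Theorem~\ref{thm:structured-block_Kronecker_pencils} as an immediate corollary of Theorem~\ref{thm:structured-minimal-basis-pencil} applied to the dual minimal bases $L_k(\lambda)\otimes I_n$ and $\Lambda_k(\lambda)^T\otimes I_n$ of Example~\ref{ex-L-Lamb}. Your verification that $Q(\lambda)=P(\lambda)$ via the reality of $\Lambda_k$ and Proposition~\ref{prop:properties_Mobius}, and your use of Theorem~\ref{thm:singular_structure_Mobius} to transfer the shift from right to left minimal indices, are both sound (one could alternatively read off the left-index shift directly from Theorem~\ref{thm:blockminlin}(c2), but your route works just as well).
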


Following the terminology introduced in \cite{PartI,PartII,PartIII}, when the matrix $A$ is any of those listed in Table \ref{table:1}, the corresponding $\mathbf{M}_A$-structured block Kronecker pencil \eqref{eq:structured-Kron-pencil} is called  \emph{(skew-)symmetric},  \emph{(anti-)palindromic}, or \emph{alternating block Kronecker pencil}, depending on the case.
Moreover, the union of the sets of (skew-)symmetric,  (anti-)palindromic, and alternating block Kronecker pencils is called the family of \emph{structured block Kronecker pencils}.
We list in Table \ref{table:2} the minimal bases $\mathbf{M}_A[L_k](\lambda)\otimes I_n$ and the conditions on the pencil $M(\lambda)$ for structured block Kronecker pencils.
 \begin{table}[ht]
\caption{The  minimal bases $\mathbf{M}_A[L_k](\lambda)\otimes I_n$ and the conditions on $M(\lambda)=\lambda M_1+M_0$ for structured block Kronecker pencils.}
\centering 
\begin{tabular}{|l|c|r|}
\hline \hline
\bf{structure} \phantom{\Large{(}} & condition on $M(\lambda)=\lambda M_1+M_0$ & $\mathbf{M}_A[L_k](\lambda)$  \\
\hline\hline
Symmetric  \phantom{\Large{(}}  & $\lambda M_1+M_0$ with $M_0^\star=M_0$ and $M_1^\star=M_1$ & $L_k(\lambda)$ \\ 
\hline
Skew-symmetric  \phantom{\Large{(}} & $\lambda M_1+M_0$ with $M_0^\star=-M_0$ and $M_1^\star=-M_1$ & $-L_k(\lambda)$ \\
\hline
Palindromic \phantom{\Large{(}} & $\lambda M_1+M_1^\star$ & $\rev L_k(\lambda)$\\
\hline
Anti-palindromic \phantom{\Large{(}} & $\lambda M_1-M_1^\star$ & $-\rev L_k(\lambda)$  \\
\hline
Alternating (even) \phantom{\Large{(}} & $\lambda M_1+M_0$ with $M_0^\star=M_0$ and $M_1^\star=-M_1$  & $L_k(-\lambda)$ \\
\hline
Alternating (odd)  \phantom{\Large{(}} & $\lambda M_1+M_0$ with $M_0^\star=-M_0$ and $M_1^\star=M_1$ & $-L_k(-\lambda)$ \\
\hline
\end{tabular}
\label{table:2}
\end{table}

We know from Theorem \ref{thm:solving_M} that one can always construct a structure-preserving strong linearization of any $\mathbf{M}_A$-structured matrix polynomial with odd grade $g$ via an $\mathbf{M}_A$-structured block Kronecker pencil as in \eqref{eq:structured-Kron-pencil} with $k=(g-1)/2$.
Furthermore, for the (skew-)symmetric, (anti-)palindromic or alternating structures, this construction turns out to be rather simple.
In Theorem \ref{thm:givenP_findM}, we show what conditions on $M(\lambda)$ are needed for a structured block Kronecker pencil to be a structure-preserving strong linearization of a given odd-grade structured matrix polynomial.
\begin{theorem}\label{thm:givenP_findM}
Let $P(\lambda)=\sum_{i=0}^g P_i\lambda^i\in\mathbb{F}[\lambda]^{n\times n}$ be an odd-grade structured matrix polynomial, let $\mathscr{S}(P)$ be the structure of $P(\lambda)$, and let $A$ be one of the matrices in Table \ref{table:1}, depending on $\mathscr{S}(P)$.
Additionally, let $M(\lambda)=\lambda M_1+M_0\in\mathbb{F}[\lambda]^{(k+1)n\times (k+1)n}$, with $k=(g-1)/2$, be a matrix pencil, and let us partition the matrices $M_0$ and $M_1$ into $(k+1)\times(k+1)$ blocks each of size $n\times n$ and let us denote these blocks by $[M_1]_{ij},[M_0]_{ij}\in\mathbb{F}^{n\times n}$ for $i,j=1,2,\hdots,k+1$.  
If the following condition holds, for $\ell=0,1,\hdots,g$, 
\begin{equation}\label{eq:condition_sym}
P_\ell = \sum_{i+j=g+2-\ell}[M_1]_{ij}+\sum_{i+j=g+1-\ell}[M_0]_{ij},
\end{equation}
when $\mathscr{S}(P)\in\{\mbox{symmetric, skew-symmetric}\}$, or
\begin{equation}\label{eq:condition_pal}
P_\ell = \sum_{i-j=\ell-k-1}[M_1]_{ij}+\sum_{i-j=\ell-k}[M_0]_{ij},
\end{equation}
 when $\mathscr{S}(P)\in\{\mbox{palindromic, anti-palindromic}\}$, or
\begin{equation}\label{eq:condition_alt}
P_\ell = \sum_{i+j=g+2-\ell}(-1)^{k-i+1}[M_1]_{ij}+\sum_{i+j=g+1-\ell}(-1)^{k-i+1}[M_0]_{ij},
\end{equation}
 when $\mathscr{S}(P)\in\{\mbox{even, odd}\}$,  then the matrix pencil
\[
\mathcal{L}(\lambda) = \begin{bmatrix}
\frac{1}{2}(M(\lambda)+\mathbf{M}_A[M](\lambda)^\star) & \mathbf{M}_A[L_k](\lambda)^\star\otimes I_n \\
L_k(\lambda)\otimes I_n & 0
\end{bmatrix}
\]
is an $\mathbf{M}_A$-structured block Kronecker pencil such that:

\smallskip

\begin{enumerate}
\item[\rm (i)] $\mathcal{L}(\lambda)$ is a strong linearization of $P(\lambda)$,
\item[\rm (ii)] $\mathcal{L}(\lambda)$ and $P(\lambda)$ share the same structure, i.e., $\mathscr{S}(P)=\mathscr{S}(\mathcal{L})$, and
\item[\rm (iii)] the  left minimal indices of $\mathcal{L}(\lambda)$ are those of $P(\lambda)$ increased by $k$, and the right minimal indices of $\mathcal{L}(\lambda)$ are those of $P(\lambda)$ increased by $k$. 
\end{enumerate}

\smallskip

\end{theorem}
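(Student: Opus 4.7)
The plan is to reduce the theorem to the already-established Theorem \ref{thm:solving_M} and Theorem \ref{thm:structured-block_Kronecker_pencils}. The strategy is: first, show that conditions \eqref{eq:condition_sym}, \eqref{eq:condition_pal}, and \eqref{eq:condition_alt} are equivalent to the polynomial identity
\[
P(\lambda) = \bigl(\mathbf{M}_A[\Lambda_k](\lambda)^\star\otimes I_n\bigr)\,M(\lambda)\,\bigl(\Lambda_k(\lambda)\otimes I_n\bigr),
\]
that is, $M(\lambda)$ is a solution (not necessarily $\mathbf{M}_A$-structured) of the polynomial equation \eqref{eq:structured-matrix-equation-Kron}. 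Second, apply Theorem \ref{thm:solving_M} with $K(\lambda)=L_k(\lambda)\otimes I_n$ and $N(\lambda)=\Lambda_k(\lambda)^T\otimes I_n$ (a pair of dual minimal bases by Example \ref{ex-L-Lamb}) to conclude that the symmetrized pencil $\tfrac12(M(\lambda)+\mathbf{M}_A[M](\lambda)^\star)$ is an $\mathbf{M}_A$-structured solution of the same equation, and hence that $\mathcal{L}(\lambda)$ is an $\mathbf{M}_A$-structured block Kronecker pencil satisfying \eqref{eq:structured-Kron-pencil}.

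The core calculation is the expansion step above. Writing $M_1,M_0$ as $(k+1)\times(k+1)$ block matrices, and noting that for every structure in Table \ref{table:1} the matrix $\mathbf{M}_A[\Lambda_k](\lambda)$ is a vector whose $i$-th entry is a monomial in $\lambda$ with a simple sign (e.g.\ $\lambda^{k+1-i}$ in the symmetric case, $\lambda^{i-1}$ in the palindromic case, $(-1)^{k+1-i}\lambda^{k+1-i}$ in the even case), the product $(\mathbf{M}_A[\Lambda_k](\lambda)^\star\otimes I_n)\,M(\lambda)\,(\Lambda_k(\lambda)\otimes I_n)$ becomes a sum of the form $\sum_{i,j}(\pm)\lambda^{\alpha(i,j)}[M_1]_{ij}\lambda + \sum_{i,j}(\pm)\lambda^{\alpha(i,j)}[M_0]_{ij}$. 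Collecting the coefficient of $\lambda^\ell$ in each of the three cases produces precisely the anti-diagonal (or diagonal) block sums \eqref{eq:condition_sym}, \eqref{eq:condition_pal}, and \eqref{eq:condition_alt}. This is a routine but case-by-case verification using $g=2k+1$.

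Once this equivalence is established, assertion (i) (strong linearization) and assertion (iii) (shift of minimal indices by $k$) are immediate from Theorem \ref{thm:structured-block_Kronecker_pencils} applied to $\mathcal{L}(\lambda)$. For assertion (ii), the identity $\mathbf{M}_A[\mathcal{L}](\lambda)=\mathcal{L}(\lambda)^\star$ holds by construction of the symmetrization and by the coninvolutory property of $A$; since the matrix $A$ in Table \ref{table:1} encodes exactly the structure $\mathscr{S}(P)$, the $\mathbf{M}_A$-identity translates to $\mathscr{S}(\mathcal{L})=\mathscr{S}(P)$.

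The main obstacle is not conceptual but bookkeeping: one has to carry out the expansion correctly for the three distinct structure types, keeping track of the signs $(-1)^{k+1-i}$ in the alternating case and of the reversed indexing in the palindromic case. A unified presentation using $\mathbf{M}_A[\Lambda_k](\lambda)$ as the common object, together with the rational-function form \eqref{eq:Mobius_rational} of M\"obius transformations, should allow the three cases to be treated in a single calculation, rather than three separate ones; this will keep the proof short.
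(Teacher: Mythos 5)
Your overall strategy is sound, and in fact a bit cleaner than the paper's: rather than verifying directly that the symmetrized pencil $\tfrac12\bigl(M(\lambda)+\mathbf{M}_A[M](\lambda)^\star\bigr)$ satisfies \eqref{eq:structured-matrix-equation-Kron}, you verify that the unsymmetrized $M(\lambda)$ does and then hand the symmetrization off to Theorem~\ref{thm:solving_M}, which already encapsulates it. However, your central claim --- that \eqref{eq:condition_sym}, \eqref{eq:condition_pal}, \eqref{eq:condition_alt} are \emph{equivalent} to $P(\lambda)=(\mathbf{M}_A[\Lambda_k](\lambda)^\star\otimes I_n)\,M(\lambda)\,(\Lambda_k(\lambda)\otimes I_n)$ --- is false in three of the six cases. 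For skew-symmetric, $A=-I_2$ gives $\mathbf{M}_A[\Lambda_k](\lambda)=(-1)^k\Lambda_k(\lambda)$, while \eqref{eq:condition_sym} encodes the \emph{unsigned} identity $(\Lambda_k(\lambda)^T\otimes I_n)M(\lambda)(\Lambda_k(\lambda)\otimes I_n)=P(\lambda)$; so what actually holds is $(\mathbf{M}_A[\Lambda_k](\lambda)^\star\otimes I_n)M(\lambda)(\Lambda_k(\lambda)\otimes I_n)=(-1)^kP(\lambda)$, and the factor $(-1)^k$ cannot be absorbed when $k$ is odd. The same $(-1)^k$ appears in the anti-palindromic and odd-alternating cases. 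Note that the three entries you spell out ($\lambda^{k+1-i}$, $\lambda^{i-1}$, $(-1)^{k+1-i}\lambda^{k+1-i}$) are precisely the symmetric, palindromic, and even cases --- the three without the sign.

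The fix is small but necessary, and it is exactly what the paper does: prove the identity only ``up to a sign,'' then add the observation that any strong linearization of $-P(\lambda)$ is also a strong linearization of $P(\lambda)$, with the same minimal indices. In your framework this means: apply Theorem~\ref{thm:solving_M} to $(-1)^kP(\lambda)$ (still $\mathbf{M}_A$-structured), conclude that $\mathcal{L}(\lambda)$ is a strong linearization of $(-1)^kP(\lambda)$ with minimal indices shifted by $k$, and then pass from $(-1)^kP$ back to $P$. With that correction your reduction to Theorem~\ref{thm:solving_M} goes through and is a genuine simplification over re-deriving the symmetrization inside the proof.
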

\begin{proof}
Clearly, the structured block Kronecker pencil $\mathcal{L}(\lambda)$ and the matrix polynomial $P(\lambda)$ share the same structure, that is, part (ii) holds.
To prove parts (i) and (iii), we just need to check that \eqref{eq:structured-matrix-equation-Kron} holds for $(M(\lambda)+\mathbf{M}_A[M](\lambda)^\star)/2$ (up to a sign), since the desired results would follow from Theorem  \ref{thm:structured-block_Kronecker_pencils}, together with the fact that any strong linearization of $-P(\lambda)$ is also a strong linearization of $P(\lambda)$, and the sets of right and left minimal indices of $P(\lambda)$ and $-P(\lambda)$ are the same.

Several cases have to be distinguished. 
For brevity, we focus only on the case $\mathscr{S}(P)\in\{symmetric,skew-symmetric\}$.
The proofs for the other cases are very similar, so we invite the reader to complete the proof.
First, assume that $\mathscr{S}(P)\in\{symmetric\}$.
Then, we have $\mathbf{M}_A[\Lambda_k](\lambda)\otimes I_n = \Lambda_k(\lambda)\otimes I_n$ and $\mathbf{M}_A[M](\lambda)^\star = M(\lambda)^\star $, so, in this case, the structured block Kronecker pencil $\mathcal{L}(\lambda)$ is a strong linearization of $Q_1(\lambda):=(\Lambda_k(\lambda)^T\otimes I_n)(M(\lambda)+M(\lambda)^\star)(\Lambda_k(\lambda)\otimes I_n)/2$.
A direct multiplication, some basic manipulations and the condition \eqref{eq:condition_sym} yield $P(\lambda)=Q_1(\lambda)$.
Therefore, the result is true in this  case.
Now, assume $\mathscr{S}(P)\in\{skew-symmetric\}$.
In this  case, we have $\mathbf{M}_A[\Lambda_k](\lambda)\otimes I_n =(-1)^k \Lambda_k(\lambda)\otimes I_n$ and $\mathbf{M}_A[M](\lambda)^\star =- M(\lambda)^\star $.
Therefore, $\mathcal{L}(\lambda)$ is a strong linearization of $Q_2(\lambda):=(-1)^k(\Lambda_k(\lambda)^T\otimes I_n)(M(\lambda)-M(\lambda)^\star)(\Lambda_k(\lambda)\otimes I_n)/2$.
It is not difficult to check that the condition \eqref{eq:condition_sym} implies that $Q_2(\lambda)=(-1)^k P(\lambda)$. 
Thus, the result is also true in this case.
\end{proof}

We illustrate Theorem \ref{thm:givenP_findM} in Examples \ref{ex:1}, \ref{ex:2} and \ref{ex:3}, where we construct structure-preserving strong linearizations for grade-7 symmetric, palindromic and even
 matrix polynomials, respectively.
\begin{example}\label{ex:1}
Let $P(\lambda) = \sum_{i=0}^7P_i\lambda^i\in\mathbb{F}[\lambda]^{n\times n}$ be a symmetric grade-7 matrix polynomial, and consider the following matrix pencil
\[
M_1(\lambda):=
\begin{bmatrix}
\lambda P_7 & 0 & 0 & 0 \\
\lambda P_6 +P_5 & P_4 & P_3 &0 \\
0 & 0 & P_2 & 0 \\
0 & 0 & P_1 & P_0
\end{bmatrix}.
\]		
It is easy to check that the pencil $M_1(\lambda)$ satisfies \eqref{eq:condition_sym} with $g=7$.
In this case, we have $A=\left[\begin{smallmatrix} 1 & 0 \\ 0 & 1 \end{smallmatrix}\right]$, so the pencil $(M_1(\lambda)+\mathbf{M}_A[M_1](\lambda)^\star)/2$ is given by
\[
\frac{1}{2}(M_1(\lambda)+M_1(\lambda)^\star)=
\begin{bmatrix}
\lambda P_7 & (\lambda P_6+P_5)/2 & 0 & 0 \\
(\lambda P_6+P_5)/2 & P_4 & P_3/2 & 0 \\
0 & P_3/2 & P_2 & P_1/2 \\
0 & 0 & P_1/2 & P_0
\end{bmatrix},
\]
which is a symmetric pencil.
We conclude, by Theorem \ref{thm:givenP_findM}, that the symmetric block Kronecker pencil
\[
\left[\begin{array}{cccc|ccc}
\lambda P_7 & (\lambda P_6+P_5)/2 & 0 & 0 & -I_n & 0 & 0\\
(\lambda P_6+P_5)/2 & P_4 & P_3/2 & 0 & \lambda I_n & -I_n & 0\\
0 & P_3/2 & P_2 & P_1/2 & 0 & \lambda I_n & -I_n \\
0 & 0 & P_1/2 & P_0 & 0 & 0 & \lambda I_n \\ \hline
-I_n & \lambda I_n & 0 & 0 & 0 & 0 & 0\\
0 & -I_n & \lambda I_n & 0 & 0 & 0 & 0\\
0 & 0 & -I_n & \lambda I_n & 0 & 0 & 0
\end{array}\right]
\]
is a symmetric strong linearization of $P(\lambda)$.
\end{example}
\begin{example}\label{ex:2}
Let $P(\lambda) = \sum_{i=0}^7P_i\lambda^i\in\mathbb{F}[\lambda]^{n\times n}$ be a palindromic grade-7 matrix polynomial, and consider the following matrix pencil
\[
M_2(\lambda):=
\begin{bmatrix}
0 & 0 & P_1 & P_0 \\
0 & P_3 & P_2 & 0 \\
\lambda P_6 & \lambda P_5+P_4 & 0 & 0 \\
\lambda P_7 & 0 & 0 & 0
\end{bmatrix}.
\]		
It is easy to check that the pencil $M_2(\lambda)$ satisfies \eqref{eq:condition_pal} with $g=7$.
For the palindromic structure we have $A=\left[\begin{smallmatrix} 0 & 1 \\ 1 & 0 \end{smallmatrix}\right]$, so the pencil $(M_2(\lambda)+\mathbf{M}_A[M_2](\lambda)^\star)/2$ is given by
\[
\frac{1}{2}(M_2(\lambda)+\rev_1 M_2(\lambda)^\star)=
\begin{bmatrix}
0 & 0 & P_1 & P_0 \\
0 & (\lambda P_4+P_3)/2 & \lambda P_3/2+P_2 & 0 \\
\lambda P_6 & \lambda P_5+P_4/2 & 0 & 0 \\
\lambda P_7 & 0 & 0 & 0
\end{bmatrix},
\]
which is a palindromic pencil.
Then, from Theorem \ref{thm:givenP_findM}, we obtain  that the palindromic block Kronecker pencil
\[
\left[\begin{array}{cccc|ccc}
0 & 0 & P_1 & P_0 & -\lambda I_n & 0 & 0\\
0 & (\lambda P_4+P_3)/2 & \lambda P_3/2+P_2 & 0 & I_n & -\lambda I_n & 0\\
\lambda P_6 & \lambda P_5+P_4/2 & 0 & 0 & 0 & I_n &-\lambda I_n \\
\lambda P_7 & 0 & 0 & 0 & 0 & 0 &  I_n \\ \hline
-I_n & \lambda I_n & 0 & 0 & 0 & 0 & 0\\
0 & -I_n & \lambda I_n & 0 & 0 & 0 & 0\\
0 & 0 & -I_n & \lambda I_n & 0 & 0 & 0
\end{array}\right]
\]
is a palindromic strong linearization of $P(\lambda)$.
\end{example}

\begin{example}\label{ex:3}
Let $P(\lambda) = \sum_{i=0}^7P_i\lambda^i\in\mathbb{F}[\lambda]^{n\times n}$ be an even grade-7 matrix polynomial, and consider the following matrix pencil
\[
M_3(\lambda):=
\begin{bmatrix}
-\lambda P_7 & 0 & 0 & 0 \\
\lambda P_6 & \lambda P_5+P_4 & P_3 & 0 \\
0 & 0 & -P_2 & 0 \\
0 & 0 & 0 & \lambda P_1+P_0
\end{bmatrix}.
\]		
It is easy to check that the pencil $M_3(\lambda)$ satisfies \eqref{eq:condition_alt} with $g=7$.
For even-structured matrix polynomials, we have $A=\left[\begin{smallmatrix} -1 & 0 \\ 0 & 1 \end{smallmatrix}\right]$, so the pencil $(M_3(\lambda)+\mathbf{M}_A[M_3](\lambda)^\star)/2$ is given by
\[
\frac{1}{2}(M_3(\lambda)+ M_3(-\lambda)^\star)=
\begin{bmatrix}
-\lambda P_7 & -\lambda P_6/2 & 0 & 0 \\
\lambda P_6/2 & \lambda P_5+P_4 & P_3/2 & 0 \\
0 & -P_3/2 & -P_2 & 0 \\
0 & 0 & 0 & \lambda P_1+P_0
\end{bmatrix},
\]
which is an even pencil.
We conclude, by Theorem \ref{thm:givenP_findM}, that the even block Kronecker pencil
\[
\left[\begin{array}{cccc|ccc}
-\lambda P_7 & -\lambda P_6/2 & 0 & 0  & -I_n & 0 & 0\\
\lambda P_6/2 & \lambda P_5+P_4 & P_3/2 & 0 & -\lambda I_n & -I_n & 0\\
0 & -P_3/2 & -P_2 & 0 & 0 & -\lambda I_n & -I_n \\
0 & 0 & 0 & \lambda P_1+P_0 & 0 & 0 & -\lambda I_n \\ \hline
-I_n & \lambda I_n & 0 & 0 & 0 & 0 & 0\\
0 & -I_n & \lambda I_n & 0 & 0 & 0 & 0\\
0 & 0 & -I_n & \lambda I_n & 0 & 0 & 0
\end{array}\right]
\]
is an even strong linearization of $P(\lambda)$.
\end{example}

We now focus on the famous block-tridiagonal and block-antitridiagonal structure-preserving linearizations introduced in  \cite{Greeks2,Alternating,Palindromic,Skew}.
We show in Example \ref{ex:tridiagonal}, for a small-grade case, that (modulo permutations) they are structured block Kronecker pencils.
 The extension of this result to any odd-grade matrix polynomial is straightforward.
\begin{example}\label{ex:tridiagonal}
Let  $\sum_{i=0}^{5}P_i\lambda^i\in\mathbb{F}[\lambda]^{n\times n}$ be a grade-5 structured matrix polynomial, and let $\sigma\in\{-1,1\}$.
Consider, first, the block-tridiagonal pencil
\[
\mathcal{L}_1(\lambda)=\begin{bmatrix}
\lambda P_5+P_4 & -\sigma I_n & 0 & 0 & 0 \\
-I_n & 0 & \lambda I_n & 0 & 0 \\
0 & \sigma \lambda I_n & \lambda P_3+P_2 & -\sigma I_n & 0 \\
0 & 0 & -I_n & 0 & \lambda I_n \\
0 & 0 & 0 & \sigma \lambda I_n & \lambda P_1+P_0
\end{bmatrix}.
\]
This pencil is a strong linearization of $P(\lambda)$ that for $\sigma=1$ is symmetric when $P(\lambda)$ is, or for $\sigma=-1$ is skew-symmetric when $P(\lambda)$ is \cite{Greeks2}.
Then, consider the block-antitridiagonal pencil
\[
\mathcal{L}_2(\lambda)=\begin{bmatrix}
0 & 0 & 0 & -\sigma \lambda I_n & \lambda P_1+P_0 \\
0 & 0 & -I_n & 0 & \lambda I_n \\
0 & -\sigma \lambda I_n & \lambda P_3+P_2 & \sigma I_n & 0 \\
-I_n & 0 & \lambda I_n & 0 & 0 \\
\lambda P_5+P_4 & \sigma I_n & 0 & 0 & 0
\end{bmatrix}.
\]
The above pencil is a strong linearization of $P(\lambda)$ that for $\sigma=1$ is palindromic when $P(\lambda)$ is, or for $\sigma=-1$ is anti-palindromic when $P(\lambda)$ is \cite{Palindromic}.
Finally, consider the block-tridiagonal pencil
\[
\mathcal{L}_3(\lambda)=\begin{bmatrix}
\lambda P_5+P_4 & -\sigma I_n & 0 & 0 & 0 \\
-I_n & 0 & \lambda I_n & 0 & 0 \\
0 & -\sigma \lambda I_n & -\lambda P_3-P_2 & -\sigma I_n & 0 \\
0 & 0 & -I_n & 0 & \lambda I_n \\
0 & 0 & 0 & -\sigma \lambda I_n & \lambda P_1+P_0
\end{bmatrix}.
\]
This pencil is a strong linearization of $P(\lambda)$ that for $\sigma=1$ is even when $P(\lambda)$ is, or for $\sigma=-1$ is odd when $P(\lambda)$ is \cite{Alternating}.
 Moreover, it is not difficult to show that there exist permutation matrices $\Pi_1,\Pi_2,\Pi_3$ such that
 \begin{align*}
 &\Pi_1\mathcal{L}_1(\lambda)\Pi_1^\star =
 \left[\begin{array}{ccc|cc}
 \lambda P_5+P_4 & 0 & 0 & -\sigma I_n & 0 \\
 0 & \lambda P_3+P_2 & 0 & \sigma \lambda I_n & -\sigma I_n \\
 0 & 0 & \lambda P_1+P_0 & 0 & \sigma \lambda I_n \\ \hline
 -I_n & \lambda I_n & 0 & 0 & 0 \\
 0 & -I_n & \lambda I_n & 0 & 0
 \end{array}\right], \\
  &\Pi_2\mathcal{L}_2(\lambda)]\Pi_2^\star =
 \left[\begin{array}{ccc|cc}
 0 & 0 & \lambda P_1+P_0 & -\sigma \lambda I_n & 0 \\
 0 & \lambda P_3+P_2 & 0 & \sigma I_n & -\sigma \lambda I_n \\
 \lambda P_5+P_4 & 0 & 0 & 0 & \sigma I_n \\ \hline
  -I_n & \lambda I_n & 0 & 0 & 0 \\
 0 & -I_n & \lambda I_n & 0 & 0
  \end{array}\right], \quad \mbox{and}\\
   &\Pi_3\mathcal{L}_3(\lambda)\Pi_3^\star =
 \left[\begin{array}{ccc|cc}
 \lambda P_5+P_4 & 0 & 0 & -\sigma I_n & 0 \\
 0 & -\lambda P_3-P_2 & 0 & -\sigma \lambda I_n & -\sigma I_n \\
 0 & 0 & \lambda P_1+P_0 & 0 & -\sigma \lambda I_n \\ \hline
 -I_n & \lambda I_n & 0 & 0 & 0 \\
 0 & -I_n & \lambda I_n & 0 & 0
 \end{array}\right].
 \end{align*}
 In other words, the block-tridiagonal and block-antitridiagonal structure-preserving linearizations are (up to a permutation) structured block Kronecker pencils.
\end{example}

\medskip

The next section is devoted to the backward error analysis when the structured complete polynomial eigenvalue problem is solved via a structure-preserving linearization obtained from a structured block Kronecker pencil, and a structurally global backward stable generalized eigensolver.

\section{Global and structured backward error analysis}\label{sec:analysis}

As we mentioned in the introduction, the structured complete polynomial eigenvalue problem consists of computing all the eigenvalues, finite and infinite, and all the minimal indices, left and right, of a structured matrix polynomial $P(\lambda)$ using an algorithm that preserves the spectral symmetries of $P(\lambda)$ in a floating point arithmetic environment.
For example, for palindromic or alternating matrix polynomials, the structured version of the staircase algorithm for pencils developed in \cite{Schroder_thesis} can be applied to any structure-preserving strong linearization of the matrix polynomial whose minimal indices are related to those of $P(\lambda)$ via simple rules.
When the palindromic matrix polynomial is regular, the problem consists just of computing finite and infinite eigenvalues.
In this case, the preferred method is the palindromic-QR algorithm \cite{Implicit_palQR,palQR}.

Some of the structure-preserving generalized eigensolvers, such as the structured version of the staircase algorithm mentioned above and the palindromic-QR algorithm, are structurally backward stable. 
This means, that if they are applied to any structure-preserving strong linearization $\mathcal{L}(\lambda)$ of a structured matrix polynomial $P(\lambda)$ in a computer with unit roundoff $\mathbf{u}$, then the computed complete eigenstructure of $\mathcal{L}(\lambda)$ is the exact complete eigenstructure of a matrix pencil $\mathcal{L}(\lambda)+\Delta \mathcal{L}(\lambda)$ such that 
\begin{equation}\label{eq:pencil_backward_stable}
\frac{\| \Delta \mathcal{L}(\lambda) \|_F}{\|\mathcal{L}(\lambda)\|_F} = O(\bf{u}) \quad  \mbox{and} \quad \mathscr{S}(\Delta \mathcal{L}) = \mathscr{S}(\mathcal{L}),
\end{equation}
which for the structures considered in this work, that is, (skew-)symmetric, (anti-)\break palindromic and alternating, is equivalent to $\mathscr{S}(\mathcal{L}+\Delta \mathcal{L})= \mathscr{S}(\mathcal{L})$.
However, it is not obvious whether or not \eqref{eq:pencil_backward_stable} guarantees that the computed complete eigenstructure of $P(\lambda)$ is the exact complete eigenstructure of a nearby matrix polynomial $P(\lambda)+\Delta P(\lambda)$ of the same grade as $P(\lambda)$ such that 
\begin{equation}\label{eq:poly_backward_stable}
\frac{\|\Delta P(\lambda)\|_F}{\|P(\lambda)\|_F} = O(\mathbf{u}) \quad \mbox{and} \quad \mathscr{S}(\Delta P) = \mathscr{S}(P)\left(= \mathscr{S}(\mathcal{L})\right).
\end{equation}

The goal of this section is to study this question for the family of structured block Kronecker pencils, and its answer can be found in Theorem \ref{thm:back_error_main_theorem} and Corollary \ref{cor:FINperturbation}.
Before proceeding, we remark that our structured backward error analysis follows  closely the unstructured analysis in the recent work \cite[Section 6]{minimal_pencils}. 
However, there are some very important differences in our analysis that we will highlight.
Also, to help the reader to follow the argument that leads to Theorem \ref{thm:back_error_main_theorem}, we start by sketching the main ideas and the three steps in which the backward error analysis is split.

\medskip

\noindent {\bf Initial data}. A structured ((skew-)symmetric, (anti-)palindromic or alternating) matrix polynomial $P(\lambda)=\sum_{i=0}^g P_i\lambda^i\in\mathbb{F}[\lambda]^{n\times n}$ and a structured block Kronecker pencil $\mathcal{L}(\lambda)$ as in \eqref{eq:structured-Kron-pencil}, where $A$ is one of the matrices in Table \ref{table:1} and it is chosen to guarantee $\mathscr{S}(P)=\mathscr{S}(\mathcal{L})$, such that 
\begin{equation}\label{eq:poly_section6}
P(\lambda) = \left( \mathbf{M}_{A}[\Lambda_k](\lambda)^\star\otimes I_n\right)M(\lambda)\left( \Lambda_k(\lambda)\otimes I_n \right), \quad \mbox{with }2k+1=g,
\end{equation}
are given. 
A perturbation $\Delta \mathcal{L}(\lambda)$ of the pencil $\mathcal{L}(\lambda)$ with $\mathscr{S}(\Delta \mathcal{L})=\mathscr{S}(\mathcal{L})$ is also given.
We will partition the perturbed pencil $\mathcal{L}(\lambda)+\Delta \mathcal{L}(\lambda)$ into blocks conformable to those of the natural partition of $\mathcal{L}(\lambda)$, that is,
\begin{equation}\label{eq:partition_perturbed_pencil}
\mathcal{L}(\lambda)+\Delta \mathcal{L}(\lambda) = 
\left[
\begin{array}{c|c}
\lambda M_1+M_0 + \Delta \mathcal{L}_{11}(\lambda) & \mathbf{M}_A[L_k](\lambda)^\star \otimes I_n + \mathbf{M}_A[\Delta \mathcal{L}_{21}](\lambda)^\star \\ \hline
L_k(\lambda)\otimes I_n + \Delta\mathcal{L}_{21}(\lambda) & \Delta\mathcal{L}_{22}(\lambda) 
\end{array}
\right],
\end{equation}
where the relation between the blocks $(1,2)$ and $(2,1)$ of the pencil $\Delta\mathcal{L}(\lambda)$ is forced by $\mathbf{M}_A[\Delta\mathcal{L}](\lambda)=\Delta\mathcal{L}(\lambda)^\star$ and $\lambda M_1+M_0:=M(\lambda)$.

\smallskip

\noindent {\bf First step}.
We establish a bound on $\|\Delta \mathcal{L}(\lambda)\|_F$ that allows us to construct an $\star$-congruence transformation that puts the (2,2)-block of the perturbed pencil back to zero, preserving simultaneously the structure of the pencil (recall Proposition \ref{prop:congruence}):
\begin{align}\label{eq:back_to_zero}
\begin{split}
&\begin{bmatrix}
I_{(k+1)n} & 0 \\
X & I_{kn}
\end{bmatrix}
\left( \mathcal{L}(\lambda)+\Delta \mathcal{L}(\lambda) \right)
\begin{bmatrix}
I_{(k+1)n} & X^\star \\
0 & I_{kn}
\end{bmatrix}  \\=
&\left[\begin{array}{c|c}
\lambda M_1+M_0 + \Delta \mathcal{L}_{11}(\lambda) & \mathbf{M}_A[L_k](\lambda)^\star \otimes I_n + \mathbf{M}_A[\Delta\mathcal{\widetilde{L}}_{21}](\lambda)^\star \\ \hline
L_k(\lambda)\otimes I_n + \Delta\mathcal{\widetilde{L}}_{21}(\lambda) & \phantom{\Big{(}} 0 \phantom{\Big{(}}
\end{array}
\right]\\=:
&\mathcal{L}(\lambda)+\Delta \mathcal{\widetilde{L}}(\lambda).
\end{split}
\end{align} 

The construction in \eqref{eq:back_to_zero} is equivalent to solving a nonlinear system of $\star$-Sylvester-like equations  whose unknown is the  matrix $X$. 
Further, we obtain detailed bounds on $\|X\|_F$ and $\|\Delta \mathcal{\widetilde{L}}_{21}(\lambda)\|_F$ in terms of $\| \Delta \mathcal{L}(\lambda)\|_F$. 
It is important to remark that the pencils $\mathcal{L}(\lambda)+\Delta \mathcal{L}(\lambda)$ and $\mathcal{L}(\lambda)+\Delta \mathcal{\widetilde{L}}(\lambda)$ have the same complete eigenstructure, since  $\star$-congruence transformations are strict equivalence transformations.
We emphasize that the key reason to use an $\star$-congruence transformation, instead of the strict equivalence transformation  in \cite [Section 6]{minimal_pencils}, is that the structure of the pencil is preserved under $\star$-congruence, that is, $\mathscr{S}(\Delta \widetilde{L})=\mathscr{S}(\Delta L)$.

\smallskip

\noindent {\bf Second step}.
By using the main results in \cite[Section 6.2]{minimal_pencils}, we obtain bounds on $\| \Delta \mathcal{\widetilde{L}}_{21}(\lambda) \|_F$ that guarantee that $\mathcal{L}(\lambda)+\Delta \mathcal{\widetilde{L}}(\lambda)$ in \eqref{eq:back_to_zero} is an $\mathbf{M}_A$-structured strong block minimal bases pencil. 
As the second step in the analysis in \cite[Section 6]{minimal_pencils}, this requires two sub-steps: (i) to prove that $K(\lambda) = L_k(\lambda)\otimes I_n+\Delta \mathcal{\widetilde{L}}_{21}(\lambda)$ is a  minimal basis with all its row degrees equal to 1, and (ii) to show that there exists a minimal basis
\[
N(\lambda) = \Lambda_k(\lambda)^T\otimes I_n + \Delta R_k(\lambda)^T
\]
dual to $K(\lambda)$ with all its row degrees equal to $k$. 
Notice that the sub-steps (i) and (ii), together with Theorem \ref{thm:minimal_basis_Mobius}, imply that $\mathbf{M}_A[K](\lambda)$ and $\mathbf{M}_A[N](\lambda)$ are dual minimal bases with all its row degrees equal, respectively, to 1 and $k$. 

\smallskip

\noindent {\bf Third step}. The results in the first and second steps, together with Theorem \ref{thm:structured-minimal-basis-pencil}, imply that the $\mathbf{M}_A$-structured pencil $\mathcal{L}(\lambda)+\Delta \mathcal{L}(\lambda)$ in \eqref{eq:partition_perturbed_pencil} is a strong linearization of the $\mathbf{M}_A$-structured  matrix polynomial 
\begin{align}
\label{eq:poly_perturbed}
&P(\lambda) + \Delta P(\lambda) \\ &:= 
\nonumber
\left( \mathbf{M}_{A}[\Lambda_k](\lambda)^\star\otimes I_n+\mathbf{M}_{A}[\Delta R_k](\lambda)^\star \right)
( M(\lambda) + \Delta \mathcal{L}_{11}(\lambda) )
\left( \Lambda_k(\lambda)\otimes I_n+\Delta R_k(\lambda) \right),
\end{align}
and that the right and left minimal indices of  $\mathcal{L}(\lambda)+\Delta \mathcal{L}(\lambda)$ are those of $P(\lambda)+\Delta P(\lambda)$ shifted by $k$. 
Then, combining the bounds obtained in the first and second steps, we obtain a bound on $\|\Delta P(\lambda)\|_F/\| P(\lambda)\|_F$ in terms of $\|\Delta \mathcal{L}(\lambda) \|_F / \| \mathcal{L}(\lambda) \|_F$. 
Finally, the consequences of this bound are discussed.

\medskip

In the following three subsections we develop in detail the three steps that we have outlined above. 
One final remark before continuing is that, since the matrices in Table \ref{table:1} are all real, we will use without saying it explicitly that $\overline{A}=A$ (except in Theorem \ref{thm:from_T-Syl_to_Syl}, which is true for any coninvolutory matrix).

\subsection{First step: solving a system of quadratic $\star$-Sylvester-like matrix equations for constructing the $\star$-congruence}\label{sec:first_step}

Here and thereafter, we use a notation similar to the notation introduced in \cite[Section 6.1]{minimal_pencils} for the (2,1)-block of a structured block Kronecker pencil \eqref{eq:structured-Kron-pencil}, this is, $L_k(\lambda)\otimes I_n =: \lambda F_k\otimes I_n - E_k\otimes I_n =: \lambda F_{kn}-E_{kn}$, where 
\begin{equation}\label{eq:bases_definitions}
  E_{kn}=
  \left[
    \begin{array}{cc}
      I_{k}&0_{k\times 1}
    \end{array}
  \right]\otimes I_n\>,\quad \mbox{and} \quad
  F_{kn}=
  \left[
    \begin{array}{cc}
      0_{k\times 1}&I_{k}
    \end{array}
  \right]\otimes I_n\>.
\end{equation}
In addition, the natural blocks of the perturbation $\Delta \mathcal{L}(\lambda)$ in \eqref{eq:partition_perturbed_pencil} are denoted by
\begin{equation} \label{eq:blocksofdeltaL}
 \Delta \mathcal{L}(\lambda) =:
\left[\begin{array}{c|c}
\lambda \Delta B_{11}+\Delta A_{11} & \lambda (a\Delta B_{21}+c\Delta A_{21})^\star+ (b \Delta B_{21}+d\Delta A_{21})^\star \\ \hline
\lambda \Delta B_{21}+\Delta A_{21} & \lambda \Delta B_{22}+\Delta A_{22}
\end{array}\right] \, ,
\end{equation}
where recall that $A=\left[\begin{smallmatrix} a & b\\ c & d \end{smallmatrix} \right]$ is the matrix defining the M\"obius transformation $\mathbf{M}_A$, and we introduce the following two matrices
\begin{equation}\label{eq:EFhat}
\widehat{F}_{kn}:=F_{kn}+\Delta B_{21}, \quad \mbox{and} \quad
\widehat{E}_{kn}:=-E_{kn}+\Delta A_{21}.
\end{equation}

We start with the simple Lemma \ref{lem:trivial}, where we show that the construction of the $\star$-congruence in \eqref{eq:back_to_zero} is equivalent to solve a system of nonlinear $\star$-Sylvester-like equations.
The proof is a direct algebraic manipulation and is omitted.
\begin{lemma} \label{lem:trivial} There exists a constant matrix $X\in\FF^{kn \times (k+1)n}$ satisfying \eqref{eq:back_to_zero} if and only if
\begin{equation}\label{eq:gen_sylv2}
   \left[
    \begin{array}{cc}
        X&I_{kn}
    \end{array}
  \right]
  (\mathcal{L}(\lambda)+\Delta \mathcal{L}(\lambda))
  \left[
    \begin{array}{c}
      X^\star\\
      I_{kn}
    \end{array}
  \right]
  =0\>.
\end{equation}
Moreover, with the notation introduced in \eqref{eq:bases_definitions}, \eqref{eq:blocksofdeltaL} and \eqref{eq:EFhat}, the equation \eqref{eq:gen_sylv2} is equivalent to the following system of quadratic $\star$-Sylvester-like matrix equations
\begin{equation}\label{eq:gen_T-sylv}
\left\{ \begin{array}{l}
      X (b\widehat{F}_{kn}+d\widehat{E}_{kn} )^\star+\widehat{E}_{kn} X^\star= -\Delta A_{22}-f_{M_0+\Delta A_{11}}(X)\\
      X (a\widehat{F}_{kn}+c\widehat{E}_{kn})^\star+\widehat{F}_{kn}X^\star= -\Delta B_{22}-f_{M_1+\Delta B_{11}}(X)
    \end{array}\right.,
\end{equation}
for the unknown matrix $X$, where $f_M(X)$ is the following quadratic matrix function 
\begin{equation}\label{eq:f1_f2}
f_M(X):=XMX^\star.
\end{equation}

\end{lemma}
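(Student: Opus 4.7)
The proof of Lemma \ref{lem:trivial} is essentially bookkeeping, so my plan is to organize a careful block-matrix computation and split the argument along the two equivalences asserted in the statement.

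For the equivalence between \eqref{eq:back_to_zero} and \eqref{eq:gen_sylv2}, I would carry out the $\star$-congruence on $\mathcal{L}(\lambda)+\Delta\mathcal{L}(\lambda)$ block by block. Since both outer factors are unit block-triangular with identity diagonal, the $(1,1)$-block is untouched; the $(2,1)$- and $(1,2)$-blocks get modified in the way consistent with preservation of the $\mathbf{M}_A$-structure (this is guaranteed by Proposition \ref{prop:congruence}); and the new $(2,2)$-block is precisely the Schur-complement-like expression
\[
X(\mathcal{L}+\Delta\mathcal{L})_{11}X^\star + X(\mathcal{L}+\Delta\mathcal{L})_{12} + (\mathcal{L}+\Delta\mathcal{L})_{21}X^\star + (\mathcal{L}+\Delta\mathcal{L})_{22}.
\]
Setting this block to zero is exactly what \eqref{eq:gen_sylv2} expresses, and the converse direction is immediate from the same identity.

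For the equivalence between \eqref{eq:gen_sylv2} and the system \eqref{eq:gen_T-sylv}, I would substitute the explicit block expressions from \eqref{eq:partition_perturbed_pencil} using the shorthand in \eqref{eq:bases_definitions}--\eqref{eq:EFhat}. The $(2,1)$-block collapses cleanly to $\lambda\widehat{F}_{kn}+\widehat{E}_{kn}$. For the $(1,2)$-block I would use Example \ref{ex:Mobius_pencil} to write $\mathbf{M}_A[L_k\otimes I_n](\lambda)=\lambda(aF_{kn}-cE_{kn})+(bF_{kn}-dE_{kn})$ and then combine with the perturbation blocks in \eqref{eq:blocksofdeltaL} to obtain $\lambda(a\widehat{F}_{kn}+c\widehat{E}_{kn})^\star+(b\widehat{F}_{kn}+d\widehat{E}_{kn})^\star$. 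With these substitutions, \eqref{eq:gen_sylv2} becomes a matrix pencil in $\lambda$ whose coefficients are explicit in $X$ and the data. Since $X$ is constant and the identity must hold for every $\lambda$, I would equate the $\lambda$- and constant coefficients to zero separately, then isolate on the left-hand side the terms that are linear in $X$; the definition \eqref{eq:f1_f2} of $f_M(X)$ then packages the remaining quadratic terms on the right-hand side, producing exactly the two equations of \eqref{eq:gen_T-sylv}.

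The only obstacle worth naming is the bookkeeping with the sign convention buried in $\widehat{E}_{kn}=-E_{kn}+\Delta A_{21}$ and with the $\mathbf{M}_A$-action on the $(1,2)$-block: one must verify that the $\lambda$-part and the constant part of the $(1,2)$-block separate cleanly as $(a\widehat{F}_{kn}+c\widehat{E}_{kn})^\star$ and $(b\widehat{F}_{kn}+d\widehat{E}_{kn})^\star$, respectively, which is precisely where the notation introduced in \eqref{eq:EFhat} pays off. Beyond this, everything is mechanical and no nontrivial idea is required.
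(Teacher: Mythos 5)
Your sketch is correct and is exactly the direct algebraic manipulation the paper alludes to when omitting the proof: the $(2,2)$-block of the $\star$-congruence product is the Schur-complement expression \eqref{eq:gen_sylv2}, and substituting the block expressions using \eqref{eq:bases_definitions}--\eqref{eq:EFhat} (together with Example~\ref{ex:Mobius_pencil} for the $(1,2)$-block) and equating $\lambda$- and constant coefficients yields precisely \eqref{eq:gen_T-sylv}. Nothing is missing; in particular your identification of $(a\widehat{F}_{kn}+c\widehat{E}_{kn})^\star$ and $(b\widehat{F}_{kn}+d\widehat{E}_{kn})^\star$ as the $\lambda$- and constant parts of the perturbed $(1,2)$-block is the only nontrivial bookkeeping and it is handled correctly.
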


Our goal is to establish conditions on $\|\Delta \mathcal{L}(\lambda)\|_F$ that guarantee the existence of a solution $X$ to \eqref{eq:gen_T-sylv} with $\|X\|_F \lesssim \|\Delta \mathcal{L}(\lambda)\|_F$.
Such a solution will be obtained in Theorem \ref{thm:gen_sylvester_solution} via the following fixed point iteration:
\begin{align}
\nonumber &\mbox{Solve for $X_0$ the system of linear $\star$-Sylvester equations:} \\ \label{eq:fixed_point_it_X0}
&\left\{\begin{array}{l}
      X_0 (b\widehat{F}_{kn}+d\widehat{E}_{kn} )^\star+\widehat{E}_{kn}X_0^\star= -\Delta A_{22}\\
      X_0 (a\widehat{F}_{kn}+c\widehat{E}_{kn})^\star+\widehat{F}_{kn}X_0^\star= -\Delta B_{22}
    \end{array}\right. \, .\\
\nonumber &\mbox{For $i\geq 1$, solve for $X_i$ the system of linear $\star$-Sylvester equations:}\\ \label{eq:fixed_point_it}
&\left\{ \begin{array}{l}
      X_i (b\widehat{F}_{kn}+d\widehat{E}_{kn} )^\star+\widehat{E}_{kn}X_i^\star= -\Delta A_{22}-f_{M_0+\Delta A_{11}}(X_{i-1})\\
      X_i (a\widehat{F}_{kn}+c\widehat{E}_{kn})^\star+\widehat{F}_{kn}X_i^\star= -\Delta B_{22}-f_{M_1+\Delta B_{11}}(X_{i-1})
    \end{array}\right.\, .
\end{align}
This fixed point iteration idea, whose origin can be traced back to the work by Stewart \cite{Stewart}, is similar to the one for proving \cite[Theorem 6.8]{minimal_pencils}.
However, we emphasize that the corresponding matrix equations are rather different.

Notice that at every step of the fixed point iteration \eqref{eq:fixed_point_it_X0}-\eqref{eq:fixed_point_it} we have to solve a system of linear $\star$-Sylvester equations.
To help us to solve those equations we present Theorem \ref{thm:from_T-Syl_to_Syl}, where we relate the solution of a kind of systems of $\star$-Sylvester equations with the solution of  certain systems of Sylvester equations. 
\begin{theorem}\label{thm:from_T-Syl_to_Syl}
Let $E,F\in \mathbb{F}^{m\times n}$, let $A=\left[\begin{smallmatrix} a & b \\ c & d \end{smallmatrix}\right]\in {\rm GL}(2,\mathbb{F})$ be a coninvolutory matrix, and let $C(\lambda)=\lambda C_1+C_0\in\mathbb{F}[\lambda]^{m\times m}$ be an $\mathbf{M}_A$-structured pencil.
Let $T_{(A,E,F)}(X)$ be the linear operator
\begin{align*}
T_{(A,E,F)}:\mathbb{F}^{m\times n}&\longrightarrow \mathbb{F}^{2m\times m} \\
X &\longrightarrow \begin{bmatrix}
T_0(X) \\ T_1(X)
\end{bmatrix} := \begin{bmatrix}
X(bF+dE)^\star+EX^\star \\
X(aF+cE)^\star+FX^\star
\end{bmatrix},
\end{align*}
and let $S_{(A,E,F)}(Y,Z)$ be the following bilinear operator
\begin{align*}
S_{(A,E,F)}:\mathbb{F}^{m\times n}\times \mathbb{F}^{m\times n}&\longrightarrow \mathbb{F}^{2m\times m} \\
(Y,Z) &\longrightarrow \begin{bmatrix}
S_0(Y,Z) \\ S_1(Y,Z)
\end{bmatrix} := \begin{bmatrix}
Y(bF+dE)^\star+EZ^\star \\
Y(aF+cE)^\star+FZ^\star
\end{bmatrix}.
\end{align*}
If the pair of matrices $(Y_0,Z_0)$ is a solution of the system of Sylvester equations $S_{(A,E,F)}(Y,Z)=\left[ \begin{smallmatrix} C_0 \\ C_1 \end{smallmatrix} \right]$, then $X_0=(Y_0+Z_0)/2$ is a solution of the system of $\star$-Sylvester equations  $T_{(A,E,F)}(X)= \left[ \begin{smallmatrix} C_0 \\ C_1 \end{smallmatrix} \right]$.
\end{theorem}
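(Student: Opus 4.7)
The plan is to prove $T_{(A,E,F)}(X_0) = \bigl[\begin{smallmatrix}C_0\\C_1\end{smallmatrix}\bigr]$ by direct computation, reducing everything to a swap-invariance property: if $(Y_0, Z_0)$ solves $S_{(A,E,F)}(Y,Z) = \bigl[\begin{smallmatrix}C_0\\C_1\end{smallmatrix}\bigr]$, then so does $(Z_0, Y_0)$. Expanding each component of $T_{(A,E,F)}(X_0)$ with $X_0 = (Y_0+Z_0)/2$ gives
\[
T_0(X_0) = X_0(bF+dE)^\star + EX_0^\star = \tfrac{1}{2}\bigl[S_0(Y_0,Z_0) + S_0(Z_0,Y_0)\bigr],
\]
and likewise $T_1(X_0) = \tfrac{1}{2}[S_1(Y_0,Z_0)+S_1(Z_0,Y_0)]$. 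Since the first summand in each average is already $C_0$ (respectively, $C_1$) by hypothesis, the theorem reduces to proving the swapped identities $S_0(Z_0,Y_0)=C_0$ and $S_1(Z_0,Y_0)=C_1$.

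To establish these, I would first apply $(\cdot)^\star$ to both hypothesized equations, obtaining
\[
(bF+dE)Y_0^\star + Z_0 E^\star = C_0^\star, \qquad (aF+cE)Y_0^\star + Z_0 F^\star = C_1^\star,
\]
and then substitute on the right-hand sides the $\mathbf{M}_A$-structure identities $C_0^\star = bC_1 + dC_0$ and $C_1^\star = aC_1 + cC_0$, which come from equating $\lambda$-coefficients in $\mathbf{M}_A[C](\lambda) = C(\lambda)^\star$. The key step is to then form the two linear combinations $\overline{d}\cdot(\text{first})+\overline{b}\cdot(\text{second})$ and $\overline{c}\cdot(\text{first})+\overline{a}\cdot(\text{second})$ of these conjugated equations. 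A short scalar computation shows that the coefficients appearing in front of $FY_0^\star$ and $EY_0^\star$ on the left, and in front of $C_1$ and $C_0$ on the right, are precisely the four entries of $A\overline{A}$ (up to reordering). The coninvolutory relation $A\overline{A}=I_2$ therefore collapses the first combination to $EY_0^\star+\overline{d}Z_0 E^\star+\overline{b}Z_0 F^\star=C_0$ and the second to $FY_0^\star+\overline{c}Z_0 E^\star+\overline{a}Z_0 F^\star=C_1$. Finally, regrouping $\overline{b}F^\star+\overline{d}E^\star=(bF+dE)^\star$ and $\overline{a}F^\star+\overline{c}E^\star=(aF+cE)^\star$ turns these into exactly $S_0(Z_0,Y_0)=C_0$ and $S_1(Z_0,Y_0)=C_1$, as required.

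The main obstacle is spotting the correct pair of scalar combinations so that the coefficients on both sides can be annihilated by the coninvolution $A\overline{A}=I_2$. Aside from this, the proof is careful bookkeeping of complex conjugations: in the complex case, $\star$ conjugates the scalars $a,b,c,d$ whenever they multiply matrices via $(\alpha M)^\star=\overline{\alpha}M^\star$, whereas the $\mathbf{M}_A$-structure relations for $C_0$ and $C_1$ carry no conjugations since they arise from equating polynomial coefficients. A pleasant conceptual byproduct of the argument is that the swap $(Z_0, Y_0)$ is always a solution whenever $(Y_0, Z_0)$ is, so the symmetric average $X_0 = (Y_0+Z_0)/2$ automatically inherits solvability in the stricter $\star$-Sylvester form needed for Lemma \ref{lem:trivial} and the construction of the structure-preserving $\star$-congruence.
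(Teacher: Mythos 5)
Your proof is correct, and it is essentially the same argument as the paper's: both apply $(\cdot)^\star$ to the hypothesis, invoke the $\mathbf{M}_A$-structure relations $C_0^\star = bC_1+dC_0$ and $C_1^\star = aC_1+cC_0$, and exploit the four scalar identities coming from $A\overline{A}=I_2$. Your reorganization via the swap-invariance fact $S_i(Z_0,Y_0)=C_i$ is a pleasant structural refinement of the paper's direct substitution into the expanded $T_0(X_0)$ and $T_1(X_0)$, but the underlying computation is the same.
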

\begin{proof}
Assume that there exist matrices $Y_0,Z_0\in\mathbb{F}^{m\times n}$ satisfying the linear system of matrix equations $S_{(A,E,F)}(Y,Z)=\left[\begin{smallmatrix} C_0 \\ C_1 \end{smallmatrix}\right]$, i.e., 
\begin{align}
&Y_0(bF+dE)^\star+EZ_0^\star = C_0,\label{eq:Sys_eq1} \\
&Y_0(aF+cE)^\star+FZ_0^\star = C_1. \label{eq:Sys_eq2}
\end{align}
Applying the $(\cdot)^\star$ operator on both sides of \eqref{eq:Sys_eq1} and \eqref{eq:Sys_eq2}, and using that the pencil $\lambda C_1+C_0$ is $\mathbf{M}_A$-structured, we obtain that the pair of matrices $(Y_0,Z_0)$ also satisfies
\begin{align}
&Z_0E^\star + (bF+dE)Y_0^\star = bC_1+dC_0,\label{eq:Sys_eq3} \\ 
&Z_0F^\star + (aF+cE)Y_0^\star = aC_1+cC_0. \label{eq:Sys_eq4}
\end{align}

Let $X_0=(Y_0+Z_0)/2$.
To prove the desired result, we have to check that $X_0$ satisfies the equations $T_0(X_0)=C_0$ and $T_1(X_0)=C_1$.
For the first equation, using \eqref{eq:Sys_eq1}, we obtain
\[
T_0(X_0)=\frac{(Y_0+Z_0)}{2}(bF+dE)^\star + E\frac{(Y_0+Z_0)^\star}{2} = \frac{1}{2}\left( C_0+\overline{b}Z_0F^\star + \overline{d}Z_0E^\star+EY_0^\star \right).
\]
Then, from \eqref{eq:Sys_eq4} we get  $\overline{b}Z_0F^\star = a\overline{b}C_1+c\overline{b}C_0-a\overline{b}FY_0^\star - c\overline{b}EY_0^\star$, and, from \eqref{eq:Sys_eq3}, we get $\overline{d}Z_0E^\star = b\overline{d}C_1+d\overline{d}C_0-b\overline{d}FY_0^\star-d\overline{d}EY_0^\star$.
Substituting these expressions for $\overline{b}Z_0F^\star$ and $\overline{d}Z_0E^\star$ in the above equation, we obtain
\[
T_0(X_0)=\frac{1}{2}\left( C_0+\overline{b}Z_0F^\star + \overline{d}Z_0E^\star+EY_0^\star \right) = C_0,
\]
where we have used $a\overline{b}+b\overline{d}=0$ and $c\overline{b}+d\overline{d}=1$, which follows from $A\overline{A}=I_2$.
Therefore, the matrix $X_0$ satisfies the first matrix equation $T_0(X_0)=C_0$.
Proceeding in a similar way, it is not difficult to show that 
\[
T_1(X_0)=\frac{(Y_0+Z_0)}{2}(aF+cE)^\star + F\frac{(Y_0+Z_0)^\star}{2} = C_1.
\]
Thus, we conclude that $T_{(A,E,F)}(X_0)= \left[ \begin{smallmatrix} C_0 \\ C_1 \end{smallmatrix} \right]$, as we wanted to prove.
\end{proof}

To apply Theorem \ref{thm:from_T-Syl_to_Syl} for obtaining solutions of the systems of $\star$-Sylvester equations   \eqref{eq:fixed_point_it_X0} and \eqref{eq:fixed_point_it} solving, instead, a linear system of Sylvester equations, their right-hand-sides need to be the trailing  and leading coefficients of an $\mathbf{M}_A$-structured pencil.
It is clear that this is the case for the right-hand-side of  \eqref{eq:fixed_point_it_X0}, since $\lambda \Delta B_{22}+\Delta A_{22}$ is by assumption the $(2,2)$ block of an $\mathbf{M}_A$-structured pencil.
In Lemma \ref{lemma:right-hand-side}, we show that this is also true for the right-hand-side of \eqref{eq:fixed_point_it}.
\begin{lemma}\label{lemma:right-hand-side}
Let $A\in {\rm GL}(2,\mathbb{F})$ and $X\in\mathbb{F}^{kn\times (k+1)n}$.
If the pencils $\lambda M_1+M_0,\lambda \Delta B_{11}+\Delta A_{11}\in\mathbb{F}[\lambda]^{(k+1)n\times(k+1)n}$,  and $\lambda \Delta B_{22}+\Delta A_{22}\in\mathbb{F}[\lambda]^{kn\times kn}$ are $\mathbf{M}_A$-structured pencils, then the pencil $\lambda(\Delta B_{22}+X(M_1+\Delta B_{11})X^\star)+\Delta A_{22}+X(M_0+\Delta A_{11})X^\star$ is also $\mathbf{M}_A$-structured.
\end{lemma}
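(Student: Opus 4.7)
The plan is to reduce the statement to two elementary closure properties of the class of $\mathbf{M}_A$-structured pencils: closure under addition, and closure under the (possibly rectangular) congruence-type map $T(\lambda)\mapsto X T(\lambda) X^\star$. Both follow cleanly from the linearity of $\mathbf{M}_A$ (Proposition \ref{prop:properties_Mobius}(a)) and the multiplicativity of $\mathbf{M}_A$ together with its compatibility with $(\cdot)^\star$ (Proposition \ref{prop:properties_Mobius}(b), (d), (e)), using that $A$ is real so $\overline{A}=A$.

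\textbf{Step 1 (additive closure).} The hypothesis that $\lambda M_1+M_0$ and $\lambda\Delta B_{11}+\Delta A_{11}$ are $\mathbf{M}_A$-structured means $\mathbf{M}_A[\lambda M_1+M_0](\lambda)=(\lambda M_1+M_0)^\star$ and similarly for $\lambda\Delta B_{11}+\Delta A_{11}$. Since both $\mathbf{M}_A$ (Proposition \ref{prop:properties_Mobius}(a), applied at grade $g=1$) and $(\cdot)^\star$ are $\mathbb{F}$-linear operations on pencils, I conclude that
\[
T(\lambda):=\lambda\bigl(M_1+\Delta B_{11}\bigr)+\bigl(M_0+\Delta A_{11}\bigr)
\]
is $\mathbf{M}_A$-structured.

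\textbf{Step 2 (congruence-type closure for rectangular $X$).} I would then show that for any constant matrix $X\in\mathbb{F}^{kn\times(k+1)n}$ the pencil $XT(\lambda)X^\star$ is again $\mathbf{M}_A$-structured. Viewing $X$ and $X^\star$ as matrix polynomials of grade $0$, Proposition \ref{prop:properties_Mobius}(b) gives $\mathbf{M}_A[X\,T\,X^\star](\lambda)=\mathbf{M}_A[X](\lambda)\,\mathbf{M}_A[T](\lambda)\,\mathbf{M}_A[X^\star](\lambda)$; since $\mathbf{M}_A$ acts trivially on grade-$0$ polynomials, and since $\mathbf{M}_A[X^\star]=\mathbf{M}_A[X]^\star$ by Proposition \ref{prop:properties_Mobius}(d)--(e) combined with $\overline{A}=A$, this reduces to $X\,\mathbf{M}_A[T](\lambda)\,X^\star = XT(\lambda)^\star X^\star=(XT(\lambda)X^\star)^\star$. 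Thus $XT(\lambda)X^\star$ is $\mathbf{M}_A$-structured. Note that this extends Proposition \ref{prop:congruence} to the rectangular setting, exactly what is needed here because $X$ is $kn\times(k+1)n$ rather than square.

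\textbf{Step 3 (assemble).} Expanding,
\[
XT(\lambda)X^\star
=\lambda\,X(M_1+\Delta B_{11})X^\star+X(M_0+\Delta A_{11})X^\star,
\]
which is $\mathbf{M}_A$-structured by Step 2. Finally, adding the $\mathbf{M}_A$-structured pencil $\lambda\Delta B_{22}+\Delta A_{22}$ (hypothesis) and invoking additive closure once more yields that
\[
\lambda\bigl(\Delta B_{22}+X(M_1+\Delta B_{11})X^\star\bigr)
+\Delta A_{22}+X(M_0+\Delta A_{11})X^\star
\]
is $\mathbf{M}_A$-structured, as claimed.

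The only non-bookkeeping point is Step 2, and even that is not really an obstacle: the only thing to check carefully is that $\mathbf{M}_A$ commutes with the $(\cdot)^\star$-operation on constant matrices when $A$ is real, which is immediate from Proposition \ref{prop:properties_Mobius}(d)--(e) (and would require replacing $A$ by $\overline{A}$ in the general complex case, but all matrices $A$ in Table \ref{table:1} are real).
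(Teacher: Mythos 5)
Your proof is correct. It differs slightly in route from the paper's: the paper forms the block-diagonal pencil $\operatorname{diag}\bigl(\lambda(M_1+\Delta B_{11})+M_0+\Delta A_{11},\; \lambda\Delta B_{22}+\Delta A_{22}\bigr)$ (which is $\mathbf{M}_A$-structured by Proposition \ref{prop:properties_Mobius}(f)), applies a $\star$-congruence by the square unitriangular matrix $\left[\begin{smallmatrix}I&0\\X&I\end{smallmatrix}\right]$ via Proposition \ref{prop:congruence}, and reads off the $(2,2)$-block of the result. That embedding trick lets the paper cite Proposition \ref{prop:congruence} directly, which is stated only for square $X$. You instead establish the two closure properties inline --- additive closure of the $\mathbf{M}_A$-structured class, and closure under the rectangular map $T(\lambda)\mapsto X T(\lambda)X^\star$ --- and your Step 2 is in effect a rectangular extension of Proposition \ref{prop:congruence}, proved correctly from Proposition \ref{prop:properties_Mobius}(b),(d),(e) together with $\overline{A}=A$ and the fact that $\mathbf{M}_A$ acts trivially on grade-$0$ polynomials. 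Your approach is more self-contained but requires one extra verification; the paper's is more economical by reusing an existing square result. Both are sound, and the underlying mechanism (preservation of $\mathbf{M}_A$-structure under $\star$-congruence plus addition) is the same.
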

\begin{proof}
Let us introduce the notation $C_1:=\Delta B_{22}+X(M_1+\Delta B_{11})X^\star$ and $C_0:= \Delta A_{22}+X(M_0+\Delta A_{11})X^\star$.
The proof is immediate from the fact that the pencil
$\left[\begin{smallmatrix} \lambda(M_1+\Delta B_{11})+M_0+\Delta A_{11} & 0 \\ 0 & \lambda \Delta B_{22}+\Delta A_{22} \end{smallmatrix} \right]$ is $\mathbf{M}_A$-structured, and the fact that the pencil $\lambda C_1+C_0$ is the (2,2) block of
\[
\begin{bmatrix}
I_{(k+1)n} & 0 \\ X & I_{kn}
\end{bmatrix}
\begin{bmatrix}
 \lambda(M_1+\Delta B_{11})+M_0+\Delta A_{11} & 0 \\ 0 & \lambda \Delta B_{22}+\Delta A_{22}
\end{bmatrix}
\begin{bmatrix}
I_{(k+1)n} & X^\star \\ 0 & I_{kn}
\end{bmatrix},
\]
which is also $\mathbf{M}_A$-structured by Proposition \ref{prop:congruence}.
\end{proof}

Theorem \ref{thm:from_T-Syl_to_Syl}, together with Lemma \ref{lemma:right-hand-side}, allows us to replace the fixed point iteration \eqref{eq:fixed_point_it_X0}-\eqref{eq:fixed_point_it} for getting a solution of \eqref{eq:gen_T-sylv} with the new iteration 
 \begin{align}
   \nonumber &\mbox{Solve for $(Y_0,Z_0)$ the system of Sylvester equations:} \\ \label{eq:fixed_point_it_Y0ZO}
&\left\{\begin{array}{l}
      Y_0 (b\widehat{F}_{kn}+d\widehat{E}_{kn} )^\star+\widehat{E}_{kn}Z_0^\star= -\Delta A_{22}\\
      Y_0 (a\widehat{F}_{kn}+c\widehat{E}_{kn})^\star+\widehat{F}_{kn}Z_0^\star= -\Delta B_{22} \end{array}\right. , \\
\nonumber &\mbox{and set }X_0 := (Y_0+Z_0)/2. \\
 \nonumber &\mbox{For $i\geq 1$, solve for $(Y_i,Z_i)$ the system of Sylvester equations:} \\ \label{eq:fixed_point_it_YZ}
&\left\{\begin{array}{l}
      Y_i (b\widehat{F}_{kn}+d\widehat{E}_{kn} )^\star+\widehat{E}_{kn}Z_i^\star= -\Delta A_{22}-f_{M_0+\Delta A_{11}}(X_{i-1})\\
      Y_i (a\widehat{F}_{kn}+c\widehat{E}_{kn})^\star+\widehat{F}_{kn}Z_i^\star= -\Delta B_{22}-f_{M_1+\Delta B_{11}}(X_{i-1}) \end{array}\right. , \\
 \nonumber &\mbox{and set }   X_i := (Y_i+Z_i)/2.
\end{align}

Observe that the linear systems of Sylvester equations \eqref{eq:fixed_point_it_Y0ZO}--\eqref{eq:fixed_point_it_YZ} are underdetermined since the number of entries of the pair $(Y_i,Z_i)$, i.e., the number of scalar unknowns, is $2k(k+1)n^2$ while the number of scalar equations is $2k^2n^2$.
Of course, this does not imply that the systems are consistent, so the next step is to obtain conditions on the norm of the perturbation pencil $\|\Delta \mathcal{L}(\lambda)\|_F$ that guarantee that the operator $S_{(A,\widehat{E}_{kn},\widehat{F}_{kn})}(Y,Z)$ introduced in Theorem \ref{thm:from_T-Syl_to_Syl} is surjective.
With this aim in mind, let us notice that a system of Sylvester matrix equations of the form
\begin{align}\label{eq:gen_sylv_linear}
\begin{split}
\left\{\begin{array}{l}
      Y (b(F_{kn}+\Delta B_{21})+d(-E_{kn}+\Delta A_{21}) )^\star+(-E_{kn} + \Delta A_{21})Z^\star= C_0\\
      Y (a(F_{kn} + \Delta B_{21})+c(-E_{kn}+\Delta A_{21}))^\star+(F_{kn} + \Delta B_{21})Z^\star= C_1,  \end{array}\right.
\end{split}
\end{align}
can be written, using the Kronecker product $\otimes$ and the $\vect(\cdot)$ operation, as the underdetermined standard linear system $(T_A+\Delta T_A)x = b$ given by
\begin{align}\label{eq:linear_operator_equation}
  &\left(
    \underbrace{\left[
      \begin{array}{c|c}
        (bF_{kn}-dE_{kn})\otimes I_{kn} & -I_{kn}\otimes E_{kn} \\ \hline
        (aF_{kn}-cE_{kn})\otimes I_{kn}& I_{kn}\otimes F_{kn}
      \end{array}
    \right]}_{=: T_A}
    + \right. \\ \nonumber
    &\left.\underbrace{
    \left[
      \begin{array}{c|c}
        (b\overline{\Delta B_{21}} +d\overline{\Delta A_{21}})\otimes I_{kn} & I_{kn}\otimes \Delta A_{21}\\\hline \\[-2.2 ex]
        (a\overline{\Delta B_{21}}+c\overline{\Delta A_{21}}) \otimes I_{kn}&I_{kn}\otimes \Delta B_{21}
      \end{array}
    \right]}_{=: \Delta T_A}
    \right)
    \underbrace{\left[
    \begin{array}{c}
      \mathrm{vec}(Y)\\\hline
      \mathrm{vec}(Z^\star)
    \end{array}
  \right]}_{=:x } = 
  \underbrace{\left[
    \begin{array}{c}
      \mathrm{vec}(C_0)\\\hline
      \mathrm{vec}(C_1)
    \end{array}
  \right]}_{=:b}
  \>,
\end{align}
where we have also used that all the entries of the matrices $E_{kn}$ and $F_{kn}$ are real.
Then, the bilinear operator $S_{(A,\widehat{E}_{kn},\widehat{F}_{kn})}(Y,Z)$ is surjective if and only if the matrix $T_A+\Delta T_A$ has full row rank.

In Lemma \ref{lemma:min_singular_val} we show that the matrix $T_A$ has full row rank. 
Additionally, we also provide a formula for its minimal singular value.
The proof of Lemma \ref{lemma:min_singular_val} is rather long, so it is postponed to Appendix \ref{appendix:proof}.
\begin{lemma}\label{lemma:min_singular_val}
Let $A\in {\rm GL}(2,\mathbb{F})$ be any of the matrices in Table \ref{table:1}.
Then, the matrix $T_A$ in \eqref{eq:linear_operator_equation} has full row rank, and its minimal singular value is given by
$\sigma_{\mathrm{min}}(T_A)=2\sin(\pi/(4k))$.
\end{lemma}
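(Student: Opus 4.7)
The plan proceeds in three stages: unify the six cases of Table~\ref{table:1} into a single canonical problem by orthogonal equivalences; strip away the $n$-dimensional identities with a perfect-shuffle permutation; and, finally, compute a single spectral norm via an antidiagonal decomposition.

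Stages one and two I expect to handle with routine block algebra. For every $A$ in Table~\ref{table:1} the entries lie in $\{-1,0,1\}$, so both $bF_{kn}-dE_{kn}$ and $aF_{kn}-cE_{kn}$ coincide with $\pm E_{kn}$ or $\pm F_{kn}$; combined with the orthogonality identities $E_{kn}E_{kn}^T = F_{kn}F_{kn}^T = I_{kn}$, a direct block multiplication produces
\[
T_A T_A^T = \begin{bmatrix} 2\,I_{k^2n^2} & -M_A \\ -M_A^T & 2\,I_{k^2n^2} \end{bmatrix},
\]
where $M_A$ equals one of $\pm J_{kn}\otimes I_{kn} \pm I_{kn}\otimes J_{kn}$ or $\pm J_{kn}^T\otimes I_{kn} \pm I_{kn}\otimes J_{kn}$, with $J_{kn} := E_{kn}F_{kn}^T$. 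The eigenvalues of this $2\times 2$ block matrix are $2\pm\sigma_j(M_A)$, so $\sigma_{\min}(T_A)^2 = 2 - \|M_A\|_2$. Using the identities $R J_{kn} R = J_{kn}^T$ (with $R$ the flip permutation) and $D J_{kn} D = -J_{kn}$ (with $D = \mathrm{diag}(1,-1,1,-1,\dots)$), conjugation of $M_A$ by the appropriate orthogonal matrices built from $R$, $D$, and $I$ will reduce every case to the single matrix $M := J_{kn}\otimes I_{kn} + I_{kn}\otimes J_{kn}$, so that $\|M_A\|_2 = \|M\|_2$ in all six cases. Factoring $J_{kn} = J\otimes I_n$ with $J := E_kF_k^T$ and applying a perfect-shuffle permutation to the resulting four-fold Kronecker products then shows that $M$ is unitarily equivalent to $M_0\otimes I_{n^2}$, where $M_0 := J\otimes I_k + I_k\otimes J$. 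The problem therefore reduces to proving $\|M_0\|_2 = 2\cos(\pi/(2k))$.

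This last norm identity is the main obstacle, and I plan to establish it via an \emph{antidiagonal decomposition}. Identifying $\mathbb{F}^{k^2}$ with $k\times k$ matrices by vectorization, $M_0$ represents the Lyapunov-like operator $L(X) := JX + XJ^T$; the coordinate formula $L(X)[i,j] = X[i-1,j] + X[i,j-1]$ (with the convention $X[\cdot,0] = X[0,\cdot] = 0$) shows that $L$ sends the antidiagonal subspace $V_d := \mathrm{span}\{e_i e_j^T : i+j=d,\ 1\le i,j\le k\}$ into $V_{d+1}$, for $d = 2,\dots,2k-1$. Reordering the standard basis of $\mathbb{F}^{k^2}$ by antidiagonal index thus makes $M_0$ block-subdiagonal with blocks $B_d : V_d \to V_{d+1}$; a direct inspection identifies each $B_d$ as a bidiagonal $(0,1)$-matrix. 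Since $M_0$ is block-subdiagonal, $M_0 M_0^T$ is block-diagonal with diagonal blocks $B_d B_d^T$, whence $\|M_0\|_2 = \max_d \sigma_{\max}(B_d)$. A short calculation will show that either $B_d^T B_d$ or $B_d B_d^T$ (whichever is smaller) has the form $2I + T$, where $T$ is the classical symmetric tridiagonal matrix of the appropriate size with zero diagonal and unit off-diagonals; its well-known eigenvalues $2\cos(j\pi/m)$ yield $\sigma_{\max}(B_d) = 2\cos(\pi/(2\min(d,2k+1-d)))$, maximized at $d\in\{k,k+1\}$ with value $2\cos(\pi/(2k))$. Substituting back, $\sigma_{\min}(T_A)^2 = 2 - 2\cos(\pi/(2k)) = 4\sin^2(\pi/(4k))$, so $\sigma_{\min}(T_A) = 2\sin(\pi/(4k))$, as claimed.
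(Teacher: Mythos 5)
Your proof is correct, and it takes a genuinely different (and more unified) route than the paper. The paper first reduces all six cases from $T_A$ to a much smaller matrix $\widehat{T}_A$ via Kronecker factoring and perfect shuffles, and then handles the cases unevenly: for the (skew-)symmetric and alternating cases it shows $\widehat{T}_A$ agrees (after sign-conjugations) with the \emph{unstructured} matrix $\widehat{T}$ and simply quotes $\sigma_{\min}(\widehat{T}) = 2\sin(\pi/(4k))$ from \cite[Lemmas 6.4, B.1]{minimal_pencils}, while for the (anti-)palindromic cases it forms $\widehat{T}_3\widehat{T}_3^T = 2I - \left[\begin{smallmatrix} 0 & \widehat{W} \\ \widehat{W}^T & 0 \end{smallmatrix}\right]$ and quotes $\sigma_{\max}(W_{k,k}) = 2\cos(\pi/(2k))$ from \cite[Proposition B4]{minimal_pencils}. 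You instead compute $T_A T_A^T$ once for the general $A$, observe that in all six cases it has the form $2I - \left[\begin{smallmatrix} 0 & M_A \\ M_A^T & 0\end{smallmatrix}\right]$ with $\|M_A\|_2 = \|J\otimes I_k + I_k\otimes J\|_2$ after the flip and alternating-sign conjugations and a perfect shuffle, so all six cases collapse to one spectral-norm calculation; and you then \emph{prove} the norm identity $\|J\otimes I_k + I_k\otimes J\|_2 = 2\cos(\pi/(2k))$ from scratch via the antidiagonal block decomposition of the Lyapunov-like operator $X\mapsto JX + XJ^T$, rather than citing \cite{minimal_pencils}. The $2\pm\sigma_j$ spectral observation, the use of $R J R = J^T$ and $D J D = -J$, and the reduction modulo $I_n$ all correctly parallel the paper's mechanics; your gain is a single uniform argument and a self-contained derivation of the key constant, at the cost of spelling out the antidiagonal-block computation (which you have verified correctly: $B_d^TB_d$ or $B_dB_d^T$ equals $2I + T$, the tridiagonal $T$ of size $\min(d-1, 2k-d)$ has largest eigenvalue $2\cos(\pi/\min(d,2k+1-d))$, giving the maximum $2\cos(\pi/(2k))$ at $d\in\{k, k+1\}$).
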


Lemma \ref{lemma:min_singular_val} implies that if $\|\Delta T_A\|_2$ is small enough, then $T_A+ \Delta T_A$ has also full row rank and the linear system \eqref{eq:linear_operator_equation} is consistent for any right-hand-side, or, equivalently, the bilinear operator $S_{(A,\widehat{E}_{kn},\widehat{F}_{kn})}(Y,Z)$  is surjective.
In Lemma \ref{lemma:linear_bound}, we bound the norm of the minimum 2-norm solution of \eqref{eq:linear_operator_equation} or, equivalently, of the minimum Frobenius norm solution of the equation \eqref{eq:gen_sylv_linear}, since $\|[\mathrm{vec}(Y)^T, \mathrm{vec}(Z^\star)^T ]^T \|_2  = \|(Y,Z) \|_F$ (recall the definition of the Frobenius norm of a pair of matrices in \eqref{eq:norm_pair}).
We omit the proof of Lemma \ref{lemma:linear_bound}, since it is identical to the proof of \cite[Lemma 6.6]{minimal_pencils}.
\begin{lemma}\label{lemma:linear_bound}
Let $(T_A+\Delta T_A)x= b$ be the underdetermined linear system  \eqref{eq:linear_operator_equation}, and let us assume that $\sigma_{\min}(T_A)>\|\Delta T_A\|_2$.
Then $(T_A+\Delta T_A)x= b$ is consistent and its minimum norm solution $(Y_0, Z_0)$ satisfies
  \begin{equation}\label{eq:bound_YZ}
    \|(Y_0,Z_0)\|_F\leq \frac{1}{\delta} \|(C_0,C_1)\|_F,
  \end{equation}
 where $ \delta:=\sigma_{\rm min}(T_A)-\|\Delta T_A\|_2$.
\end{lemma}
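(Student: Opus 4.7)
The plan is to reduce the claim about the matrix equation \eqref{eq:gen_sylv_linear} to a standard fact about minimum-norm solutions of full row rank underdetermined linear systems in Kronecker form \eqref{eq:linear_operator_equation}. The first step is to observe that for any matrix $M$ of full row rank and any perturbation $\Delta M$ with $\|\Delta M\|_2 < \sigma_{\min}(M)$, the classical Weyl-type inequality gives
\begin{equation*}
\sigma_{\min}(M + \Delta M) \;\geq\; \sigma_{\min}(M) - \|\Delta M\|_2 \;>\; 0,
\end{equation*}
so $M + \Delta M$ also has full row rank. Applying this with $M = T_A$ (which has full row rank by Lemma \ref{lemma:min_singular_val}) and $\Delta M = \Delta T_A$ under the hypothesis $\sigma_{\min}(T_A) > \|\Delta T_A\|_2$ shows that $T_A + \Delta T_A$ has full row rank and that $\sigma_{\min}(T_A + \Delta T_A) \geq \delta > 0$. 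Consequently, the underdetermined linear system $(T_A + \Delta T_A)x = b$ is consistent for every right-hand side $b$.

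The second step is to identify the minimum $2$-norm solution explicitly. Because $T_A + \Delta T_A$ has full row rank, its Moore-Penrose pseudoinverse satisfies $(T_A + \Delta T_A)(T_A + \Delta T_A)^\dagger = I$, and the minimum $2$-norm solution of $(T_A + \Delta T_A)x = b$ is $x_{\min} = (T_A + \Delta T_A)^\dagger b$. Using the standard identity $\|(T_A + \Delta T_A)^\dagger\|_2 = 1/\sigma_{\min}(T_A + \Delta T_A)$ together with the bound of the previous paragraph, I obtain
\begin{equation*}
\|x_{\min}\|_2 \;\leq\; \frac{1}{\sigma_{\min}(T_A + \Delta T_A)}\,\|b\|_2 \;\leq\; \frac{1}{\delta}\,\|b\|_2.
\end{equation*}

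The third step is merely a translation between norms. By the partition of $x$ and $b$ in \eqref{eq:linear_operator_equation}, and by basic properties of the $\vect(\cdot)$ operation, one has $\|x\|_2^2 = \|\vect(Y)\|_2^2 + \|\vect(Z^\star)\|_2^2 = \|Y\|_F^2 + \|Z\|_F^2 = \|(Y,Z)\|_F^2$ and similarly $\|b\|_2 = \|(C_0,C_1)\|_F$, where we use the notation in \eqref{eq:norm_pair}. Writing the minimum $2$-norm solution as $x_{\min} = \bigl[\vect(Y_0)^T,\; \vect(Z_0^\star)^T\bigr]^T$ and substituting yields precisely the claimed bound $\|(Y_0,Z_0)\|_F \leq \|(C_0,C_1)\|_F/\delta$.

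No single step is a real obstacle: the only nontrivial ingredient is the perturbation bound $\sigma_{\min}(T_A + \Delta T_A) \geq \sigma_{\min}(T_A) - \|\Delta T_A\|_2$, which is standard, and everything else is bookkeeping on norms. In fact, the paper notes that this proof is identical to that of \cite[Lemma 6.6]{minimal_pencils}, confirming that the argument is a direct application of these elementary facts to the specific linear system coming from \eqref{eq:gen_sylv_linear}.
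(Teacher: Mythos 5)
Your proof is correct and follows exactly the same standard argument that the paper defers to (Weyl's inequality $\sigma_{\min}(T_A + \Delta T_A) \geq \sigma_{\min}(T_A) - \|\Delta T_A\|_2$, the pseudoinverse bound for the minimum-norm solution of a full-row-rank system, and the identification of the $2$-norm of vectorized quantities with Frobenius norms). The paper omits its own proof by citing \cite[Lemma 6.6]{minimal_pencils}, and your reconstruction matches that argument.
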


Since the quantity $\sigma_{\mathrm{min}}(T_A)-\|\Delta T_A\|_2$ plays an important role in the rest of our analysis, we obtain in Lemma \ref{lemm:deltabound} a tractable lower bound on it.
The proof of this lemma is almost identical to the one for \cite[Lemma 6.7]{minimal_pencils}, so it is omitted.
\begin{lemma} \label{lemm:deltabound} Let $T_A$ and $\Delta T_A$ be the matrices in \eqref{eq:linear_operator_equation} with $A\in {\rm GL}(2,\mathbb{F})$  any of the matrices in Table \ref{table:1}, let $\Delta \mathcal{L}(\lambda)$ be the pencil in \eqref{eq:blocksofdeltaL}.
 If $\|\Delta \mathcal{L}(\lambda)\|_F < 1/(3k)$, then
\[
\sigma_{\rm min}(T_A)-\|\Delta T_A\|_2 \geq \frac{\pi}{4 k} \, (1 - 3 k \|\Delta\mathcal{L}(\lambda)\|_F ) >0 \, .
\]
\end{lemma}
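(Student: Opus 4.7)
The plan is to estimate $\sigma_{\min}(T_A)$ from below and $\|\Delta T_A\|_2$ from above and then compare the difference against $\frac{\pi}{4k}(1 - 3k\|\Delta\mathcal{L}(\lambda)\|_F)$. The argument runs in parallel to the unstructured proof in \cite[Lemma 6.7]{minimal_pencils}, with the $\mathbf{M}_A$-structure of $\Delta\mathcal{L}(\lambda)$ entering in a single step to tighten the norm bound on $\Delta T_A$.

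For the lower bound, Lemma \ref{lemma:min_singular_val} gives $\sigma_{\min}(T_A) = 2\sin(\pi/(4k))$; combining the elementary Jordan-type inequality $\sin x \geq (2/\pi)x$ on $[0,\pi/2]$ with the numerical fact $\pi < 4$ yields $\sigma_{\min}(T_A) \geq 1/k \geq \pi/(4k)$.

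For the upper bound, I would treat $\Delta T_A$ in \eqref{eq:linear_operator_equation} as a $2 \times 2$ block matrix and combine the elementary inequality bounding the squared $2$-norm of a row-block (or column-block) matrix by the sum of the squared $2$-norms of its blocks, together with the identity $\|M \otimes I\|_2 = \|I \otimes M\|_2 = \|M\|_2$, to arrive at
\[
\|\Delta T_A\|_2^2 \leq \|b\overline{\Delta B_{21}} + d\overline{\Delta A_{21}}\|_2^2 + \|a\overline{\Delta B_{21}} + c\overline{\Delta A_{21}}\|_2^2 + \|\Delta A_{21}\|_2^2 + \|\Delta B_{21}\|_2^2.
\]
The structural observation that powers the rest of the argument is that for every matrix $A$ in Table \ref{table:1} the pair $\{a,c\}$ consists of one $\pm 1$ and one $0$, and likewise for $\{b,d\}$. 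A uniform six-case inspection then collapses the first two summands above exactly to $\|\Delta A_{21}\|_2^2 + \|\Delta B_{21}\|_2^2$, producing $\|\Delta T_A\|_2 \leq \sqrt{2(\|\Delta A_{21}\|_F^2 + \|\Delta B_{21}\|_F^2)}$. The same case inspection, applied now to the $(1,2)$ block in \eqref{eq:blocksofdeltaL}, shows that that block has exactly the same Frobenius norm as the $(2,1)$ block, so $2(\|\Delta A_{21}\|_F^2 + \|\Delta B_{21}\|_F^2) \leq \|\Delta\mathcal{L}(\lambda)\|_F^2$, and therefore $\|\Delta T_A\|_2 \leq \|\Delta\mathcal{L}(\lambda)\|_F$.

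Combining the two bounds and using the trivial inequality $1 \leq 3\pi/4$ gives
\[
\sigma_{\min}(T_A) - \|\Delta T_A\|_2 \geq \frac{\pi}{4k} - \|\Delta\mathcal{L}(\lambda)\|_F \geq \frac{\pi}{4k} - \frac{3\pi}{4}\|\Delta\mathcal{L}(\lambda)\|_F = \frac{\pi}{4k}\bigl(1 - 3k\|\Delta\mathcal{L}(\lambda)\|_F\bigr),
\]
and the hypothesis $\|\Delta\mathcal{L}(\lambda)\|_F < 1/(3k)$ makes the right-hand side strictly positive. The only genuinely delicate step is the uniform six-case verification mentioned above; every other estimate is routine.
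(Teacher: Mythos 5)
Your proof is correct and follows the same two-step strategy the paper invokes by citing \cite[Lemma 6.7]{minimal_pencils}: lower-bound $\sigma_{\min}(T_A)$ via the exact value $2\sin(\pi/(4k))$ from Lemma \ref{lemma:min_singular_val} and the Jordan inequality, and upper-bound $\|\Delta T_A\|_2$ by $\|\Delta\mathcal{L}(\lambda)\|_F$ using the block bound together with $\|M\otimes I\|_2=\|I\otimes M\|_2=\|M\|_2$. The only point the paper leaves implicit in the phrase ``almost identical'' is precisely the case inspection you carry out — that for every $A$ in Table \ref{table:1} the linear combinations $b\overline{\Delta B_{21}}+d\overline{\Delta A_{21}}$ and $a\overline{\Delta B_{21}}+c\overline{\Delta A_{21}}$ are just (signed, conjugated) copies of $\Delta A_{21}$ and $\Delta B_{21}$ — which simultaneously tightens $\|\Delta T_A\|_2\le\sqrt{2(\|\Delta A_{21}\|_F^2+\|\Delta B_{21}\|_F^2)}$ and identifies the latter with $\|\Delta\mathcal{L}(\lambda)\|_F$ via the $(1,2)$ block of \eqref{eq:blocksofdeltaL}, so you have correctly filled in the genuinely structured content.
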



Finally, we show in Theorem \ref{thm:gen_sylvester_solution} and its corollary Theorem \ref{thm:finalofstep1} that the fixed point iteration \eqref{eq:fixed_point_it_X0}--\eqref{eq:fixed_point_it} or, equivalently, the iteration \eqref{eq:fixed_point_it_Y0ZO}--\eqref{eq:fixed_point_it_YZ} by choosing minimum norm solutions $(Y_i,Z_i)$ at each step converges to a solution $X$ of the system of quadratic $\star$-Sylvester-like matrix equations \eqref{eq:gen_T-sylv} such that $\|X\|_F \lesssim \|\Delta \mathcal{L}(\lambda)\|_F$, whenever $\|\Delta \mathcal{L}(\lambda)\|_F$ is properly upper bounded.
The proof of Theorem \ref{thm:gen_sylvester_solution} follows  closely those by Stewart  \cite[Theorem 5.1]{Stewart} and Dopico, Lawrence, P\'erez and Van Dooren  \cite[Theorem 6.8]{minimal_pencils}.
\begin{theorem}\label{thm:gen_sylvester_solution}
  There exists a solution $X$ of the quadratic system of $\star$-Sylvester-like matrix equations \eqref{eq:gen_T-sylv} satisfying
\begin{equation}\label{eq:norm_solution}
  \|X\|_F\leq 2\frac{\theta}{\delta}\>,
\end{equation}
whenever
\begin{equation}\label{eq:condition_convergence}
 \delta>0 \quad \mbox{and} \quad \frac{\theta\omega}{\delta^2}<\frac{1}{4}\>,
\end{equation}
where $\delta = \sigma_{\rm min}(T_A)-\|\Delta T_A\|_2$, $\theta :=\|(\Delta A_{22},\Delta B_{22})\|_F$, and $\omega:=\|(M_0+\Delta A_{11},M_1+\Delta B_{11})\|_F$.	
\end{theorem}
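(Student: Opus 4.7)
The plan is to analyze the fixed point iteration \eqref{eq:fixed_point_it_Y0ZO}--\eqref{eq:fixed_point_it_YZ}, which by Theorem \ref{thm:from_T-Syl_to_Syl} and Lemma \ref{lemma:right-hand-side} is equivalent to \eqref{eq:fixed_point_it_X0}--\eqref{eq:fixed_point_it}, and show that under \eqref{eq:condition_convergence} the sequence $\{X_i\}$ is Cauchy with limit $X$ satisfying both \eqref{eq:norm_solution} and the quadratic system \eqref{eq:gen_T-sylv}. This follows the Stewart \cite{Stewart} / \cite[Theorem 6.8]{minimal_pencils} template, with the twist that the inner linear subproblem is the \emph{underdetermined} Sylvester system $S_{(A,\widehat{E}_{kn},\widehat{F}_{kn})}(Y,Z)=b_i$, so at each step I select the minimum-norm solution $W_i=(Y_i,Z_i)$ (which, by Lemma \ref{lemma:linear_bound}, obeys $\|W_i\|_F\leq \|b_i\|_F/\delta$) and set $X_i:=(Y_i+Z_i)/2$. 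A consistency point used throughout is that Lemma \ref{lemma:right-hand-side} ensures the right-hand side at every iterate really is the coefficient pair of an $\mathbf{M}_A$-structured pencil, so Theorem \ref{thm:from_T-Syl_to_Syl} legitimately applies at every step.

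The first step is a uniform a priori bound $\|X_i\|_F\leq \mu$, where $\mu$ is the smaller root of $\omega\mu^2-\delta\mu+\theta=0$. Under \eqref{eq:condition_convergence} this root is real and satisfies $\mu=2\theta/(\delta+\sqrt{\delta^2-4\omega\theta})\leq 2\theta/\delta$. The induction base $\|X_0\|_F\leq \|W_0\|_F\leq \theta/\delta\leq \mu$ is immediate from Lemma \ref{lemma:linear_bound}. For the inductive step, using $\|X_i\|_F\leq \|W_i\|_F$ (from $X_i=(Y_i+Z_i)/2$ and $\|Y+Z\|_F^2\leq 2(\|Y\|_F^2+\|Z\|_F^2)$), the Minkowski inequality applied to the two rows of $b_i$, and the crude bound $\|XMX^\star\|_F\leq \|M\|_F\|X\|_F^2$, I would derive $\|b_i\|_F\leq \theta+\omega\|X_{i-1}\|_F^2$. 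The fixed-point identity $(\theta+\omega\mu^2)/\delta=\mu$ then closes the induction.

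The second step is contraction. Because $W_i=S^\dagger b_i$ for all $i$, the difference $W_i-W_{i-1}=S^\dagger(b_i-b_{i-1})$ is itself the minimum-norm solution of its system, so $\|W_i-W_{i-1}\|_F\leq \|b_i-b_{i-1}\|_F/\delta$. Writing $f_M(X)-f_M(X')=(X-X')MX^\star+X'M(X-X')^\star$ and using submultiplicativity of $\|\cdot\|_F$ gives $\|b_i-b_{i-1}\|_F\leq \omega(\|X_{i-1}\|_F+\|X_{i-2}\|_F)\|X_{i-1}-X_{i-2}\|_F\leq 2\mu\omega\,\|X_{i-1}-X_{i-2}\|_F$. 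Therefore $\|X_i-X_{i-1}\|_F\leq \rho\,\|X_{i-1}-X_{i-2}\|_F$ with $\rho:=2\mu\omega/\delta\leq 4\theta\omega/\delta^2<1$ by \eqref{eq:condition_convergence}. Hence $\{X_i\}$ is Cauchy in the finite-dimensional space $\FF^{kn\times(k+1)n}$, the limit $X$ inherits the bound $\|X\|_F\leq \mu\leq 2\theta/\delta$, and by continuity of the quadratic map $f_M$, $X$ solves \eqref{eq:gen_T-sylv}.

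The main obstacle I expect is not the contraction argument itself, which is largely routine once the framework is set up, but the bookkeeping that justifies the reduction from the $\star$-Sylvester iteration to the Sylvester iteration at every step: one must certify that the right-hand side $(C_0^{(i)},C_1^{(i)})$ is always the coefficient pair of an $\mathbf{M}_A$-structured pencil (this is exactly what Lemma \ref{lemma:right-hand-side} does, via a $\star$-congruence together with Proposition \ref{prop:congruence}), and simultaneously that the minimum-norm choice of $(Y_i,Z_i)$ is consistent with the pseudoinverse formulation so that the linear bound of Lemma \ref{lemma:linear_bound} propagates to a uniform quadratic bound. Once these two pieces are in place, the rest is a Banach contraction argument whose contraction constant is tuned exactly to the hypothesis $\theta\omega/\delta^2<1/4$.
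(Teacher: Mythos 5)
Your proposal is correct and follows essentially the same route as the paper's proof: the fixed-point iteration on the $\star$-Sylvester/Sylvester pair, minimum-norm solves via Lemma \ref{lemma:linear_bound}, a uniform a priori bound, and then a contraction argument with limit passage. The only cosmetic difference is that you identify the a priori bound $\mu$ directly as the smaller root of $\omega\mu^2-\delta\mu+\theta=0$ and prove $\|X_i\|_F\leq\mu$ by a one-step invariant, whereas the paper reaches the same quantity $\rho$ as the limit of Stewart's auxiliary recursion $\kappa_{i+1}=\kappa_1(1+\kappa_i)^2$; one can check these agree exactly.
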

\begin{proof}
Lemma \ref{lemma:linear_bound} and the hypothesis $\delta >0$ guarantee that the linear system of matrix equations \eqref{eq:gen_sylv_linear} is consistent for any right-hand side.
Let the minimum norm solution of \eqref{eq:fixed_point_it_Y0ZO} be denoted by $(Y_0,Z_0)$, and set $X_0:=(Y_0+Z_0)/2$. 
From Theorem \ref{thm:from_T-Syl_to_Syl} and Lemma \ref{lemma:linear_bound} we get that the matrix $X_0$ is a solution of the matrix equation \eqref{eq:fixed_point_it_X0} such that
\[
    \|X_0\|_F\leq \|(Y_0,Z_0)\|_F \leq \frac{1}{\delta}\|(\Delta A_{22}, \Delta B_{22})\|_F = \frac{\theta}{\delta}=:\rho_0.
\]
Then, let us define a sequence $\{X_i\}_{i=0}^\infty$ of matrices as follows: for $i > 0$, the matrix $X_i$ is defined as $X_i:=(Y_i+Z_i)/2$, where the pair of matrices  $(Y_i, Z_i)$ denotes the minimum Frobenius norm solution of the underdetermined system \eqref{eq:fixed_point_it_YZ}.
Clearly, we have $\|X_i\|_F\leq \|(Y_i,Z_i)\|_F$.
Moreover, vectorizing \eqref{eq:fixed_point_it_YZ} and using the matrix $T_A + \Delta T_A$ defined in \eqref{eq:linear_operator_equation}, we get
\begin{equation} \label{eq:fixed_it_vec}
    \left[
      \begin{array}{c}
        \mathrm{vec}(Y_{i})\\
        \mathrm{vec}(Z_{i}^\star)
      \end{array}
    \right]
    =
    \left[
      \begin{array}{c}
        \mathrm{vec} (Y_0)\\
        \mathrm{vec} (Z_0^\star)
      \end{array}
    \right]
    -(T_A+\Delta T_A)^\dagger
      \left[
        \begin{array}{c}
          \mathrm{vec}(X_{i-1}(M_0+\Delta A_{11})X_{i-1}^\star)\\
          \mathrm{vec}(X_{i-1}(M_1+\Delta B_{11})X_{i-1}^\star)
        \end{array}
      \right]\>.
\end{equation}
We claim that the sequence $\{X_i\}_{i=0}^\infty$ converges to a solution $X$ of \eqref{eq:gen_T-sylv}  satisfying \eqref{eq:norm_solution}.
To prove this, we first show that the sequence  $\{\|X_i\|_F\}_{i=0}^\infty$ is a bounded sequence.
If $\|X_{i-1}\|_F\leq \|(Y_{i-1},Z_{i-1})\|_F \leq \rho_{i-1}$, then we have from \eqref{eq:fixed_it_vec} that
  \begin{align*}
    \|X_i\|_F\leq &\|(Y_{i},Z_{i})\|_F  \leq  \|(Y_0,Z_0)\|_F \\
     &+\|(T_A+\Delta T_A)^\dagger \|_2\|\|(Y_{i-1},Z_{i-1})\|_F^{2}\|(M_0+\Delta A_{11},M_1+\Delta B_{11})\|_F\\
    \leq &\rho_0+\rho_{i-1}^2\omega\delta^{-1}=:\rho_{i}\>.
  \end{align*}
The quantity  $\rho_{i}$ in the equation above may be written as $\rho_{i}=\rho_0(1+\kappa_i)$, where $\kappa_i$ satisfies the recursion
\begin{equation} \label{eq:fixedstewart}
  \left\{ \begin{array}{l}
    \kappa_1=\rho_0\omega\delta^{-1}=\theta \omega \delta^{-2},\\
    \kappa_{i+1}=\kappa_1(1+\kappa_i)^2\>.
    \end{array}\right.
\end{equation}
As it is shown in the proofs of \cite[Theorem 6.8]{minimal_pencils} and \cite[Theorem 5.1]{Stewart}, if $\kappa_1<1/4$, then $\lim_{i \rightarrow \infty} \kappa_i = \kappa$, where $\kappa$ is given by
\[
\kappa =\lim_{i\rightarrow \infty} \kappa_i = \frac{2\kappa_1}{1-2\kappa_1+\sqrt{1-4\kappa_1}} < 1,
  \]
and $\kappa_i <\kappa$ for all $i\geq 1$.
Thus, the norms of the elements of the sequence $\{X_i\}_{i=0}^\infty$ are bounded as
  \begin{equation} \label{eq:bound_X}
    \|X_i\|_F\leq \rho :=\lim_{i\rightarrow \infty}{\rho_i}=\rho_0(1+\kappa)\>.
  \end{equation}
  
We now show that the sequence $\{ X_i\}_{i=0}^\infty$ converges provided that $2\delta^{-1}\omega \rho<1$, which is guaranteed by \eqref{eq:condition_convergence}.
To this purpose, let $S_i:=(Y_i-Y_{i-1},Z_i-Z_{i-1})$. 
Then, notice that \eqref{eq:fixed_it_vec} implies
\begin{alignat*}{3}
&\|S_i\|_F \\ & \leq \|(T_A+\Delta T_A)^\dagger\|_2 \,
\left\| \begin{bmatrix}
{\rm vec}\left(X_{i-1}(M_0+\Delta A_{11})X_{i-1}^\star-X_{i-2}(M_0+\Delta A_{11})X_{i-2}^\star \right) \\
{\rm vec}\left(X_{i-1}(M_1+\Delta B_{11})X_{i-1}^\star-X_{i-2}(M_1+\Delta B_{11})X_{i-2}^\star \right)
\end{bmatrix} \right\|_2 \\
&\leq \delta^{-1} \,
\left\| \left[ \begin{matrix}
{\rm vec}\left((X_{i-1}-X_{i-2})(M_0+\Delta A_{11})X_{i-1}^\star \right) \\
{\rm vec}\left((X_{i-1}-X_{i-2})(M_1+\Delta B_{11})X_{i-1}^\star \right) 
\end{matrix} \right. \right. \\
&\hspace{5cm}\left.\left. \begin{matrix}
 + {\rm vec}\left(X_{i-2}(M_0+\Delta A_{11})(X_{i-1}-X_{i-2})^\star\right)\\
 + {\rm vec}\left(X_{i-2}(M_1+\Delta B_{11})(X_{i-1}-X_{i-2})^\star\right) 
\end{matrix}\right]\right\|_2
\\
&\leq 2\delta^{-1}\omega\rho \|X_{i-1}-X_{i-2}\|_F\leq 2\delta^{-1}\omega \rho \|S_{i-1}\|_F.
\end{alignat*}
Since $2\delta^{-1}\omega \rho<1$, we get that $\{(Y_i,Z_i)\}_{i=0}^\infty$ is a Cauchy sequence and, therefore, it has a limit $(Y,Z):=\lim_{i\rightarrow \infty} (Y_i,Z_i)$.
Thus, the matrix $X:=(Y+Z)/2=\lim_{i\rightarrow \infty}(Y_i+Z_i)/2=\lim_{i\rightarrow \infty} X_i$ exists.
Finally, we show that the matrix $X$ is a solution of \eqref{eq:gen_T-sylv}  satisfying \eqref{eq:norm_solution}.
First, since the sequence $\{(Y_i,Z_i)\}_{i=0}^\infty$ satisfies \eqref{eq:fixed_it_vec} and, so, \eqref{eq:fixed_point_it_YZ}, we have that the sequence $\{X_i=(Y_i+Z_i)/2\}_{i=0}^\infty$ satisfies \eqref{eq:fixed_point_it} as a consequence of Theorem \ref{thm:from_T-Syl_to_Syl} and  Lemma \ref{lemma:right-hand-side}.
Then, by taking limits in both sides of \eqref{eq:fixed_point_it}, we get that $X$ is a solution of \eqref{eq:gen_T-sylv}.
We conclude the proof just noticing that \eqref{eq:bound_X} implies $\|X\|_F\leq \rho_0(1+\kappa)<2\rho_0=2\delta^{-1}\theta$.
\end{proof}

We complete the first step of the structured backward error analysis with Theorem \ref{thm:finalofstep1}.
Its proof follows from Lemma \ref{lemm:deltabound}, Theorem \ref{thm:gen_sylvester_solution} and norm inequalites, and it is identical to its unstructured counterpart \cite[Theorem 6.9]{minimal_pencils}, so it is omitted.
\begin{theorem} \label{thm:finalofstep1}
Let $\mathcal{L}(\lambda)$ be an $\mathbf{M}_A$-structured block Kronecker pencil as in \eqref{eq:structured-Kron-pencil}, where $A$ is any of the matrices in Table \ref{table:1}, and let $\Delta \mathcal{L}(\lambda)$ be any pencil with the same size and structure as $\mathcal{L}(\lambda)$  such that
\begin{equation} \label{eq:boundL1}
\|\Delta \mathcal{L}(\lambda)\|_F < \left(\frac{\pi}{16} \right)^2 \, \frac{1}{k^2} \, \frac{1}{1 + \|\lambda M_1 + M_0\|_F}.
\end{equation}
Then, there exists a matrix $X\in\FF^{kn\times (k+1)n}$ that satisfies
\begin{equation} \label{eq:easyboundCD}
\|X\|_F \leq \frac{3k}{1-3k \|\Delta\mathcal{L}(\lambda)\|_F} \, \|\Delta \mathcal{L}(\lambda)\|_F,
\end{equation}
and the equality \eqref{eq:back_to_zero} with
\begin{equation}\label{eq:boundL12tilde}
\|\Delta \widetilde{\mathcal{L}}_{21}(\lambda)\|_{F} \leq  
 \|\Delta \mathcal{L}(\lambda)\|_F 
\left(1 + \frac{3k}{1-3k \|\Delta \mathcal{L}(\lambda)\|_F} \, (\|\lambda M_1 + M_0\|_F + \|\Delta \mathcal{L}(\lambda)\|_F) \right).
\end{equation}
\end{theorem}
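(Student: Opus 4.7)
The plan is to turn the statement into a direct application of Theorem \ref{thm:gen_sylvester_solution}, since Lemma \ref{lem:trivial} already identifies the sought-after congruence \eqref{eq:back_to_zero} with the existence of a solution $X$ of the quadratic $\star$-Sylvester-like system \eqref{eq:gen_T-sylv}. Thus I only need (i) to verify that the hypothesis \eqref{eq:boundL1} implies the convergence conditions \eqref{eq:condition_convergence}, and (ii) to do some elementary norm bookkeeping to translate the bound \eqref{eq:norm_solution} into \eqref{eq:easyboundCD} and to deduce \eqref{eq:boundL12tilde} from the shape of the (2,1)-block of the transformed pencil.

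First, since $(\pi/16)^2/k^2 < 1/(3k)$ for all $k\geq 1$, the hypothesis \eqref{eq:boundL1} entails $\|\Delta\mathcal{L}(\lambda)\|_F < 1/(3k)$, and Lemma \ref{lemm:deltabound} then produces the lower bound $\delta \geq \frac{\pi}{4k}\bigl(1-3k\|\Delta\mathcal{L}(\lambda)\|_F\bigr) > 0$. The trivial bounds $\theta \leq \|\Delta\mathcal{L}(\lambda)\|_F$ and $\omega \leq \|\lambda M_1+M_0\|_F + \|\Delta\mathcal{L}(\lambda)\|_F \leq 1 + \|\lambda M_1+M_0\|_F$ (the latter again using \eqref{eq:boundL1}) combine with $(1-3k\|\Delta\mathcal{L}(\lambda)\|_F)^2 \geq 1/4$ (valid because \eqref{eq:boundL1} forces $3k\|\Delta\mathcal{L}(\lambda)\|_F \leq 1/2$) to give $\theta\omega/\delta^2 \leq \|\Delta\mathcal{L}(\lambda)\|_F(1+\|\lambda M_1+M_0\|_F)/\bigl((\pi/(4k))^2/4\bigr) < 1/4$, the last inequality being a direct rewriting of \eqref{eq:boundL1}. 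Theorem \ref{thm:gen_sylvester_solution} therefore produces a matrix $X$ with $\|X\|_F \leq 2\theta/\delta \leq \frac{8k}{\pi}\cdot\frac{\|\Delta\mathcal{L}(\lambda)\|_F}{1-3k\|\Delta\mathcal{L}(\lambda)\|_F}$, and the elementary estimate $8/\pi < 3$ yields \eqref{eq:easyboundCD}.

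For the second bound, I would read off the (2,1)-block of the right-hand side of \eqref{eq:back_to_zero} and obtain the explicit expression $\Delta\widetilde{\mathcal{L}}_{21}(\lambda) = \Delta\mathcal{L}_{21}(\lambda) + X\bigl(\lambda M_1+M_0+\Delta\mathcal{L}_{11}(\lambda)\bigr)$. Because $X$ is a constant matrix, the inequality $\|XP(\lambda)\|_F \leq \|X\|_F\,\|P(\lambda)\|_F$ holds entrywise on coefficient matrices, so a triangle inequality together with $\|\Delta\mathcal{L}_{21}(\lambda)\|_F,\|\Delta\mathcal{L}_{11}(\lambda)\|_F \leq \|\Delta\mathcal{L}(\lambda)\|_F$ and the bound on $\|X\|_F$ just established yield exactly \eqref{eq:boundL12tilde}. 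The only step that really required thinking was the verification of \eqref{eq:condition_convergence}; the remainder mirrors \cite[Theorem 6.9]{minimal_pencils} and consists of routine inequalities whose details I would omit.
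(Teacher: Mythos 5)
Your proposal is correct and follows the same route the paper indicates: it combines Lemma \ref{lem:trivial}, Lemma \ref{lemm:deltabound}, Theorem \ref{thm:gen_sylvester_solution}, and routine norm estimates, exactly mirroring the unstructured counterpart \cite[Theorem 6.9]{minimal_pencils} as the paper itself says. The verification of \eqref{eq:condition_convergence} (using $3k\|\Delta\mathcal L(\lambda)\|_F<1/2$, $\theta\le\|\Delta\mathcal L(\lambda)\|_F$, $\omega\le 1+\|\lambda M_1+M_0\|_F$, and $\delta^2\ge\pi^2/(64k^2)$), the simplification $8/\pi<3$, and the identification $\Delta\widetilde{\mathcal L}_{21}(\lambda)=\Delta\mathcal L_{21}(\lambda)+X(\lambda M_1+M_0+\Delta\mathcal L_{11}(\lambda))$ are all sound.
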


\subsection{Second step: proving that $\mathcal{L}(\lambda)+\Delta \mathcal{\widetilde{L}}(\lambda)$ in \eqref{eq:back_to_zero} is an $\mathbf{M}_A$-structured strong block minimal bases pencil}\label{sec:second_step}

The aim of this section is to obtain bounds on $\|\Delta \mathcal{\widetilde{L}}(\lambda)\|_F$ that ensure  the pencil \eqref{eq:back_to_zero} is an $\mathbf{M}_A$-structured strong block  minimal bases pencil. 
To prove this, we rely heavily on some important minimal bases perturbations results in \cite[Section 6.2]{minimal_pencils}.
In particular, we will use \cite[Theorem 6.18]{minimal_pencils}, which is stated below for completeness.

\begin{theorem} \label{thm:finalofstep2} Let $L_k (\lambda)$ and $\Lambda_k (\lambda)^T$ be the pencil and the row vector polynomial defined in \eqref{eq:Lk} and \eqref{eq:Lambda}, respectively, and let $\Delta \widetilde{\mathcal{L}}_{21} (\lambda)$ be any pencil of size $k n \times (k + 1) n$ such that
\begin{equation} \label{eq:final21pertbound}
\|\Delta \widetilde{\mathcal{L}}_{21} (\lambda) \|_F < \frac{\pi}{12 (k +1)^{3/2}}.
\end{equation}
Then, there exists a matrix polynomial $\Delta R_k (\lambda)^T$ with size $n \times (k + 1) n$ and grade $k$ such that
\begin{enumerate}
\item[\rm (a)]  $L_{k}(\lambda)\otimes I_n+\Delta \widetilde{\mathcal{L}}_{21}(\lambda)$ and $\Lambda_{k}(\lambda)^T\otimes I_n+\Delta R_k (\lambda)^T$ are dual minimal bases, with all the row degrees of the former equal to $1$ and with all the row degrees of the latter equal to $k$, and
\smallskip
\item[\rm (b)] $\displaystyle \|\Delta R_k (\lambda) \|_F \leq
\frac{6 \, \sqrt{2} \, (k+1)}{\pi} \, \|\Delta \widetilde{\mathcal{L}}_{21} (\lambda) \|_F \,
<  \frac{1}{\sqrt{2}}$.
\end{enumerate}
\end{theorem}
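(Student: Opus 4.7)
The plan is to prove the theorem by explicitly constructing the dual basis as a small correction to $\Lambda_k(\lambda)^T \otimes I_n$, and then verifying its properties by a quantitative perturbation analysis of the unperturbed dual pair $(L_k(\lambda)\otimes I_n, \, \Lambda_k(\lambda)^T\otimes I_n)$.

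First, I would establish that $K(\lambda) := L_k(\lambda)\otimes I_n + \Delta\widetilde{\mathcal{L}}_{21}(\lambda)$ is a minimal basis with all its row degrees equal to $1$. By Lemma \ref{lemma:constant-row-degrees}, it suffices to show that $K(\lambda_0)$ has full row rank for every $\lambda_0 \in \overline{\mathbb{F}}$ and that its leading coefficient has full row rank. The main ingredient is an explicit lower bound
\[
\sigma_{\min}(L_k(\lambda_0)\otimes I_n) \geq c_k(\lambda_0)
\]
valid for all $\lambda_0 \in \overline{\mathbb{F}}$ (with $\lambda_0 = \infty$ handled via the leading coefficient $F_{kn}$). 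Combined with the submultiplicative-like bounds of Lemma \ref{lemma:normsproducts} applied to $\|\Delta\widetilde{\mathcal{L}}_{21}(\lambda_0)\|_2$, the hypothesis \eqref{eq:final21pertbound} would yield $\|\Delta\widetilde{\mathcal{L}}_{21}(\lambda_0)\|_2 < c_k(\lambda_0)$ uniformly, so $K(\lambda_0)$ has full row rank everywhere. The key technical calculation is the uniform lower bound $c_k(\lambda_0) \geq C\,(k+1)^{-1/2}$ for some explicit absolute constant $C$, whose sharpest form ultimately produces the factor $\pi/12$ in the hypothesis.

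Next, I would construct $\Delta R_k(\lambda)^T$ by solving the equation
\[
K(\lambda)\bigl(\Lambda_k(\lambda)\otimes I_n + \Delta R_k(\lambda)\bigr) = 0,
\]
which, using $(L_k(\lambda)\otimes I_n)(\Lambda_k(\lambda)\otimes I_n) = 0$, becomes
\[
(L_k(\lambda)\otimes I_n)\,\Delta R_k(\lambda) = -\Delta\widetilde{\mathcal{L}}_{21}(\lambda)(\Lambda_k(\lambda)\otimes I_n) - \Delta\widetilde{\mathcal{L}}_{21}(\lambda)\,\Delta R_k(\lambda).
\]
The crucial point is that, on the subspace of $n(k+1)\times n$ matrix polynomials of grade at most $k$, the operator of left multiplication by $L_k(\lambda)\otimes I_n$ admits a right inverse whose operator norm can be controlled by the singular-value estimates from the first step. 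I would then run a contraction-mapping iteration starting from $\Delta R_k^{(0)} = 0$, entirely parallel in spirit to the fixed-point argument of Theorem \ref{thm:gen_sylvester_solution}: at each step the contraction factor is $O(\|\Delta\widetilde{\mathcal{L}}_{21}(\lambda)\|_F \cdot (k+1)/\pi)$, which the hypothesis \eqref{eq:final21pertbound} forces to be strictly less than $1/2$. Convergence to a solution with $\|\Delta R_k(\lambda)\|_F \leq (6\sqrt{2}(k+1)/\pi)\,\|\Delta\widetilde{\mathcal{L}}_{21}(\lambda)\|_F$ would then follow from a geometric-series estimate.

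Finally, to complete part (a), I would verify that $N(\lambda) := \Lambda_k(\lambda)^T\otimes I_n + \Delta R_k(\lambda)^T$ is itself a constant-row-degrees minimal basis with all row degrees equal to $k$. Invoking Lemma \ref{lemma:constant-row-degrees} once more, this reduces to showing that $\Delta R_k$ does not destroy the full row rank of $\Lambda_k(\lambda_0)^T\otimes I_n$ for any $\lambda_0$, nor that of its leading coefficient. The bound $\|\Delta R_k(\lambda)\|_F < 1/\sqrt{2}$ from part (b) is exactly what guarantees this, since the singular values of $\Lambda_k(\lambda_0)^T\otimes I_n$ are $\sqrt{1+|\lambda_0|^2+\cdots+|\lambda_0|^{2k}} \geq 1$, and an analogous statement holds at infinity for the leading coefficient. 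The main obstacle I anticipate is obtaining the sharp universal constants: the factor $\pi/12$ in the input hypothesis and $6\sqrt{2}/\pi$ in the output bound must all come from the same explicit analysis of $\sigma_{\min}(L_k(\lambda_0))$ over the extended plane and its propagation through the fixed-point iteration, a calculation that needs care to avoid losing factors of $\sqrt{k+1}$.
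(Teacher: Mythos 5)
Note that the paper does not actually prove Theorem~\ref{thm:finalofstep2}: immediately before the statement the authors say they will ``use \cite[Theorem 6.18]{minimal_pencils}, which is stated below for completeness.'' The theorem is a verbatim import from the companion paper \cite{minimal_pencils}, so there is no in-paper proof to compare against; your attempt is a reconstruction of an argument that lives in a different reference.

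With that caveat, your three-stage plan (show $K(\lambda):=L_k(\lambda)\otimes I_n+\Delta\widetilde{\mathcal{L}}_{21}(\lambda)$ is a constant-row-degree minimal basis via Lemma~\ref{lemma:constant-row-degrees} and a singular-value estimate; construct $\Delta R_k$ by solving $K(\lambda)(\Lambda_k(\lambda)\otimes I_n+\Delta R_k(\lambda))=0$; check that $N(\lambda)=\Lambda_k(\lambda)^T\otimes I_n+\Delta R_k(\lambda)^T$ is again a constant-row-degree minimal basis) is the natural architecture and almost certainly matches the structure in \cite{minimal_pencils}. Two corrections, though. First, the equation you write for $\Delta R_k$ is \emph{linear} in the unknown: moving $\Delta\widetilde{\mathcal{L}}_{21}\,\Delta R_k$ to the left gives $K(\lambda)\,\Delta R_k(\lambda)=-\Delta\widetilde{\mathcal{L}}_{21}(\lambda)\,(\Lambda_k(\lambda)\otimes I_n)$. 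A contraction-mapping iteration is therefore unnecessary and would lose constants; the clean route is to equate coefficients of $\lambda^0,\ldots,\lambda^{k+1}$, recognize the resulting underdetermined block-banded (convolution) linear system as a perturbation of the one built from $L_k(\lambda)\otimes I_n$, lower-bound the minimum singular value of the unperturbed system, and take the minimum-norm solution. Second, your claimed uniform lower bound $\sigma_{\min}(L_k(\lambda_0))\geq C(k+1)^{-1/2}$ is false. Homogenizing to $\alpha F_{kn}-\beta E_{kn}$ with $|\alpha|^2+|\beta|^2=1$ (which also handles $\lambda_0=\infty$ and makes the comparison with $\|\Delta\widetilde{\mathcal{L}}_{21}(\lambda)\|_F$ clean), a tridiagonal-Toeplitz eigenvalue computation gives $\sigma_{\min}\geq\sqrt{2}\sin\bigl(\pi/(2(k+1))\bigr)$, i.e.\ order $(k+1)^{-1}$, not $(k+1)^{-1/2}$. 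The additional half power in the hypothesis~\eqref{eq:final21pertbound} comes from the non-submultiplicativity losses of Lemma~\ref{lemma:normsproducts}, while the factor $(k+1)/\pi$ in the conclusion is of the order of $1/\sigma_{\min}$. Your final verification that $\|\Delta R_k(\lambda)\|_F<1/\sqrt{2}$ suffices to keep $N(\lambda)$ a minimal basis is correct.
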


By using Theorem \ref{thm:finalofstep2}, together with Theorem \ref{thm:minimal_basis_Mobius}, and the definition of $\mathbf{M}_A$-structured strong block minimal bases pencils in Definition \ref{def:structured-minimal-bases-pencil}, we prove the final result of this section.
 \begin{theorem} \label{thm:corfinalofstep2} 
Let $A\in {\rm GL}(2,\mathbb{F})$ be any of the matrices listed in Table \ref{table:1}, let $\mathcal{L}(\lambda)+\Delta \widetilde{\mathcal{L}}(\lambda)$ be the pencil in \eqref{eq:back_to_zero}.
If
\[
\|\Delta \widetilde{\mathcal{L}}_{21} (\lambda) \|_F < \frac{\pi}{12 \, (k+1)^{3/2}},
\]
then $\mathcal{L}(\lambda)+\Delta \widetilde{\mathcal{L}}(\lambda)$ is an $\mathbf{M}_A$-structured strong block minimal bases pencil. 
Moreover, there exists a matrix polynomial $\Delta R_k (\lambda)^T$ of grade $k$ such that $\Lambda_{k}(\lambda)^T\otimes I_n+\Delta R_k (\lambda)^T$ is a minimal basis dual to $ L_k(\lambda)\otimes I_n+\Delta \mathcal{\widetilde{L}}_{21}(\lambda)$ with all its row degrees equal to $k$, and
\begin{equation}\label{eq:bound_dual}
 \|\Delta R_k (\lambda) \|_F = \|\mathbf{M}_A[\Delta R_k] (\lambda) \|_F  \leq
\frac{6 \, \sqrt{2} \, (k+1)}{\pi} \, \|\Delta \widetilde{\mathcal{L}}_{21} (\lambda) \|_F \,
<  \frac{1}{\sqrt{2}} \, .
\end{equation}
\end{theorem}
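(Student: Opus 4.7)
The plan is to derive Theorem~\ref{thm:corfinalofstep2} as essentially a corollary of the unstructured perturbation result Theorem~\ref{thm:finalofstep2}, by verifying that the $\star$-congruence produced in the first step of the backward analysis leaves behind a pencil which fits Definition~\ref{def:structured-minimal-bases-pencil}. All of the analytical work has already been carried out in the preceding subsections; what remains is bookkeeping.

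First I would apply Theorem~\ref{thm:finalofstep2} verbatim to $\Delta\widetilde{\mathcal{L}}_{21}(\lambda)$, since the hypothesis $\|\Delta\widetilde{\mathcal{L}}_{21}(\lambda)\|_F<\pi/(12(k+1)^{3/2})$ is exactly \eqref{eq:final21pertbound}. This immediately produces a matrix polynomial $\Delta R_k(\lambda)^T$ of grade $k$ such that $L_k(\lambda)\otimes I_n+\Delta\widetilde{\mathcal{L}}_{21}(\lambda)$ and $\Lambda_k(\lambda)^T\otimes I_n+\Delta R_k(\lambda)^T$ are dual minimal bases with all row degrees equal to $1$ and $k$, respectively, together with the inequality $\|\Delta R_k(\lambda)\|_F\leq (6\sqrt{2}(k+1)/\pi)\|\Delta\widetilde{\mathcal{L}}_{21}(\lambda)\|_F<1/\sqrt{2}$.

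The substantive task is then to verify that the pencil $\mathcal{L}(\lambda)+\Delta\widetilde{\mathcal{L}}(\lambda)$ in \eqref{eq:back_to_zero} satisfies Definition~\ref{def:structured-minimal-bases-pencil} with $K(\lambda):=L_k(\lambda)\otimes I_n+\Delta\widetilde{\mathcal{L}}_{21}(\lambda)$ playing the role of the building minimal basis. Its (1,2) block reads $\mathbf{M}_A[L_k](\lambda)^\star\otimes I_n+\mathbf{M}_A[\Delta\widetilde{\mathcal{L}}_{21}](\lambda)^\star$, and by linearity and the tensor-product property of M\"obius transformations (parts (a) and (c) of Proposition~\ref{prop:properties_Mobius}) this equals $\mathbf{M}_A[K](\lambda)^\star$; its (2,2) block is zero by construction; and its (1,1) block $M(\lambda)+\Delta\mathcal{L}_{11}(\lambda)$ is $\mathbf{M}_A$-structured because $M(\lambda)$ is so by assumption and $\Delta\mathcal{L}_{11}(\lambda)$ inherits $\mathbf{M}_A$-structure from $\Delta\mathcal{L}(\lambda)$ through the block-wise action of M\"obius transformations (part (f) of Proposition~\ref{prop:properties_Mobius}). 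Combined with the dual minimal basis property of $K(\lambda)$ and $\Lambda_k(\lambda)^T\otimes I_n+\Delta R_k(\lambda)^T$ obtained above, this shows that $\mathcal{L}(\lambda)+\Delta\widetilde{\mathcal{L}}(\lambda)$ is an $\mathbf{M}_A$-structured strong block minimal bases pencil.

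To finish with the equality $\|\Delta R_k(\lambda)\|_F=\|\mathbf{M}_A[\Delta R_k](\lambda)\|_F$ in \eqref{eq:bound_dual}, I would simply go through the six rows of Table~\ref{table:1}: for each $A$ listed, $\mathbf{M}_A[\Delta R_k](\lambda)$ equals $\pm\Delta R_k(\pm\lambda)$ or $\pm\rev_k\Delta R_k(\lambda)$, and each of these operations acts as a signed permutation of the coefficient blocks $(\Delta R_{k,0},\dots,\Delta R_{k,k})$, hence preserves the Frobenius norm. I do not anticipate any real obstacle: the delicate analytical work (the fixed-point construction of the congruence $X$ and the quantitative minimal-basis perturbation bound) has already been discharged in Theorems~\ref{thm:finalofstep1} and~\ref{thm:finalofstep2}, and the present statement is then a verification that assembling the transformed pencil preserves the structured strong block minimal bases structure.
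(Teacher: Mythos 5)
Your proof is correct and follows essentially the same route as the paper's: apply Theorem~\ref{thm:finalofstep2} to get the perturbed dual minimal bases with the desired row degrees, observe that the transformed pencil \eqref{eq:back_to_zero} already exhibits the block form of Definition~\ref{def:structured-minimal-bases-pencil} with the (1,1) block inheriting $\mathbf{M}_A$-structure, and note that the M\"obius transformations from Table~\ref{table:1} act as signed coefficient permutations and hence are Frobenius-norm isometries. The paper's proof is terser (it states the conclusion after citing Theorem~\ref{thm:finalofstep2} without spelling out the block-by-block verification of Definition~\ref{def:structured-minimal-bases-pencil}), but the content is the same.
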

\begin{proof}
By  Theorem \ref{thm:finalofstep2}, the matrix pencil $L_k(\lambda)\otimes I_n+\Delta \widetilde{\mathcal{L}}_{21}(\lambda)$ is a minimal basis with all its row degrees equal to 1, and there exists a matrix polynomial $\Delta R_k(\lambda)$ such that $\Lambda_k(\lambda)^T\otimes I_n +\Delta R_k(\lambda)^T$ is a dual minimal basis of $L_k(\lambda)\otimes I_n+\Delta \widetilde{\mathcal{L}}_{21}(\lambda)$ with all its row degrees equal to $k$.
Therefore, by Definition \ref{def:structured-minimal-bases-pencil}, the pencil $\mathcal{L}(\lambda)+\Delta \widetilde{\mathcal{L}}(\lambda)$ in \eqref{eq:back_to_zero} is an $\mathbf{M}_A$-structured strong block minimal bases pencil.
To finish the proof, we only need to prove the upper bound \eqref{eq:bound_dual}.
Indeed, the upper bound for $\|\Delta R_k(\lambda)\|_F$ follows from part-(b) in Theorem \ref{thm:finalofstep2}.
Moreover, since $A$ is any of the matrices in Table \ref{table:1}, it is easily checked that $\| \mathbf{M}_A[\Delta R_k](\lambda) \|_F = \|\Delta R_k(\lambda)\|_F$, and the result is established.
\end{proof} 
 
\subsection{Third step: Mapping structured perturbations to a structured block Kronecker pencil onto the structured matrix polynomial}
Combining the results in Sections \ref{sec:first_step} and \ref{sec:second_step}, in this section we finish the structured backward error analysis of structured odd-degree polynomial eigenvalue problems solved using structured block Kronecker pencils.
The main result is Theorem \ref{thm:back_error_main_theorem}, whose consequences are, then, discussed in Corollary \ref{cor:FINperturbation} and Remark \ref{remark:FIN}.

In order to simplify the proof of Theorem \ref{thm:back_error_main_theorem}, we present first the following lemma.
\begin{lemma}\label{lemma:FIN}
Let $P(\lambda)$ and $P(\lambda) + \Delta P(\lambda)$ be the matrix polynomials in \eqref{eq:poly_section6} and \eqref{eq:poly_perturbed}, respectively, where $A$ is one of the matrices in Table \ref{table:1}, and write $M(\lambda)=\lambda M_1+M_0$. 
If the matrix polynomial $\Delta R_k (\lambda)$ satisfies $\|\Delta R_k (\lambda)\|_F < 1/\sqrt{2}$, then
\[
\|\Delta P(\lambda)\|_F \leq \sqrt{k+1} \left(5 \|\Delta \mathcal{L}_{11}(\lambda) \|_F + 4  \|\lambda M_1 + M_0\|_F  \|\Delta R_k (\lambda)\|_F\right) \, ,
\]
where $g = 2k + 1$.
\end{lemma}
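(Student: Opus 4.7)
My plan is to represent $\Delta P(\lambda)$ through a three-term telescoping identity and then apply the submultiplicative-like norm inequalities of Lemma~\ref{lemma:normsproducts} to each summand. Writing $U:=\mathbf{M}_A[\Lambda_k](\lambda)^\star\otimes I_n$, $V:=\Lambda_k(\lambda)\otimes I_n$, $\Delta U:=\mathbf{M}_A[\Delta R_k](\lambda)^\star$, $\Delta V:=\Delta R_k(\lambda)$, $M:=\lambda M_1+M_0$, and $\Delta M:=\Delta\mathcal{L}_{11}(\lambda)$, the definitions \eqref{eq:poly_section6} and \eqref{eq:poly_perturbed} together with the standard telescoping of $(U+\Delta U)(M+\Delta M)(V+\Delta V)-UMV$ yield
\[
\Delta P(\lambda) \,=\, \Delta U\,(M+\Delta M)(V+\Delta V) \;+\; U\,\Delta M\,(V+\Delta V) \;+\; UM\,\Delta V,
\]
and I would bound the three right-hand summands separately by the triangle inequality.

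Two auxiliary facts drive every estimate. First, as already used in Theorem~\ref{thm:corfinalofstep2}, one has $\|\mathbf{M}_A[\Delta R_k](\lambda)\|_F=\|\Delta R_k(\lambda)\|_F$ for each matrix $A$ listed in Table~\ref{table:1}. Second, for every such $A$ the vector polynomial $\mathbf{M}_A[\Lambda_k](\lambda)$ is of the form $\pm\Lambda_k(\pm\lambda)$ or $\pm\rev_k\Lambda_k(\lambda)$, so its sequence of coefficients is obtained from that of $\Lambda_k(\lambda)$ only by a reordering and possible sign changes. Inspecting the elementary Cauchy--Schwarz proof of parts (d) and (e) of Lemma~\ref{lemma:normsproducts}, one sees that both bounds continue to hold with identical constants when $\Lambda_k(\lambda)^T\otimes I_n$ is replaced by $\mathbf{M}_A[\Lambda_k](\lambda)^\star\otimes I_n$.

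With these tools in place the three summands are handled by routine applications of Lemma~\ref{lemma:normsproducts}. For $UM\,\Delta V$, I would first use the (modified) part~(e) to get $\|UM\|_F\leq\sqrt{2}\,\|M\|_F$, since $M$ has grade~$1$, and then part~(c) to pick up the factor $\sqrt{k+1}$ from multiplication against the grade-$k$ polynomial $\Delta R_k$, obtaining $\|UM\,\Delta V\|_F\leq\sqrt{2(k+1)}\,\|M\|_F\,\|\Delta R_k\|_F$. For $U\,\Delta M\,(V+\Delta V)$ I would split the right factor into its two natural pieces, apply part~(d) to $(U\Delta M)\,V$ (the factor $U\Delta M$ has grade $\leq k+1$) and part~(c) to $(U\Delta M)\,\Delta R_k$, then use part~(e) on the inner product $U\cdot\Delta M$, giving $\|U\,\Delta M\,(V+\Delta V)\|_F\leq\sqrt{2(k+1)}\,\|\Delta\mathcal{L}_{11}\|_F\,(1+\|\Delta R_k\|_F)$. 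The third summand $\Delta U\,(M+\Delta M)(V+\Delta V)$ is handled completely analogously, yielding the bound $\sqrt{2(k+1)}\,\|\Delta R_k\|_F\,(\|M\|_F+\|\Delta\mathcal{L}_{11}\|_F)(1+\|\Delta R_k\|_F)$.

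Finally, the hypothesis $\|\Delta R_k\|_F<1/\sqrt{2}$ lets me simplify $(1+\|\Delta R_k\|_F)\leq 1+1/\sqrt{2}$ and absorb the cross term $\|\Delta R_k\|_F\|\Delta\mathcal{L}_{11}\|_F\leq(1/\sqrt{2})\|\Delta\mathcal{L}_{11}\|_F$ into the first coefficient. Collecting the three contributions, the coefficient of $\sqrt{k+1}\,\|\Delta\mathcal{L}_{11}\|_F$ becomes at most $2+3\sqrt{2}/2<5$, while the coefficient of $\sqrt{k+1}\,\|M\|_F\,\|\Delta R_k\|_F$ becomes at most $2\sqrt{2}+1<4$, which is exactly the asserted inequality. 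The only conceptual obstacle is verifying that parts~(d) and (e) of Lemma~\ref{lemma:normsproducts} transfer from $\Lambda_k$ to $\mathbf{M}_A[\Lambda_k]$ for every $A$ in Table~\ref{table:1}; thereafter the proof reduces to a disciplined bookkeeping exercise closely paralleling its unstructured counterpart in \cite[Section~6.3]{minimal_pencils}.
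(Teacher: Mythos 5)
Your proof is correct and takes essentially the same approach as the paper's: expand $\Delta P(\lambda)$ as a sum of products of perturbed and unperturbed factors, bound each piece by Lemma~\ref{lemma:normsproducts} (extended to $\mathbf{M}_A[\Lambda_k]$), and absorb the higher-order terms using $\|\Delta R_k(\lambda)\|_F<1/\sqrt{2}$, arriving at the same constants. The only cosmetic differences are that the paper fully distributes $\Delta P(\lambda)$ into seven summands rather than your three-term telescoping identity, and it justifies the transfer of parts (d)--(e) of Lemma~\ref{lemma:normsproducts} to $\mathbf{M}_A[\Lambda_k](\lambda)^\star\otimes I_n$ via the M\"obius identity $\mathbf{M}_A[\Lambda_{k}(\lambda)^T\otimes I_n]^\star Z(\lambda)=\mathbf{M}_A\bigl[(\Lambda_{k}(\lambda)^T\otimes I_n)\,\mathbf{M}_{A^{-1}}[Z]\bigr](\lambda)$ together with norm-invariance of $\mathbf{M}_A$ for the matrices in Table~\ref{table:1}, rather than through your direct signed-permutation observation on the coefficients of $\Lambda_k(\lambda)$.
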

\begin{proof}
First, following the notation introduced in the proof of \cite[Lemma 6.20]{minimal_pencils}, for brevity, we use the notation $\Lambda_{k n}  := \Lambda_{k}(\lambda) \otimes I_n$ and $\Lambda_{k n}^\star := \Lambda_{k}(\lambda)^\star\otimes I_n$, and omit the dependence on $\lambda$ of some matrix polynomials.
Then, note that, since $A$ is one of the matrices in Table \ref{table:1}, we have $\|\Lambda_{k n}\|_F=\| \mathbf{M}_{A}[\Lambda_{k n}] \|_F$ and $\|\Delta R_k \|_F=\| \mathbf{M}_{A}[\Delta R_k] \|_F$. 
 From \eqref{eq:poly_section6} and \eqref{eq:poly_perturbed}, we get that
\begin{align} \nonumber
\Delta P (\lambda) = & \mathbf{M}_{A}[\Delta R_k]^\star(\lambda M_1 + M_0) \Lambda_{k n} + \mathbf{M}_{A}[\Lambda_{k n}]^\star \Delta \mathcal{L}_{11} \Lambda_{k n} + \mathbf{M}_{A}[\Delta R_k]^\star  \Delta\mathcal{L}_{11}  \Lambda_{k n} \\ \nonumber & + \mathbf{M}_{A}[\Lambda_{kn}]^\star (\lambda M_1 + M_0) \Delta R_k +
\mathbf{M}_{A}[\Delta R_{k}]^\star (\lambda M_1 + M_0) \Delta R_k \\ & + \mathbf{M}_{A}[\Lambda_{kn}]^\star \Delta\mathcal{L}_{11}  \Delta R_k + \mathbf{M}_{A}[\Delta R_k]^\star \Delta\mathcal{L}_{11}  \Delta R_k \, . \label{eq:longDeltaP}
\end{align}
The result follows from bounding the Frobenius norm of each of the terms in the right-hand side of \eqref{eq:longDeltaP}, using  Lemma \ref{lemma:normsproducts}, together with  $\|\Delta R_k \|_F  < 1/\sqrt{2}$ and $\| \mathbf{M}_{A}[\Delta R_k] \|_F < 1/\sqrt{2}$ in those terms that are not linear in $\Delta \mathcal{L}_{11} $, $\Delta R_k $, and $\mathbf{M}_{A}[\Delta R_k]$ for bounding them with linear terms.
In particular, note that Lemma \ref{lemma:normsproducts} implies that for any matrix polynomial $Z(\lambda)$ of grade $t$ and any $A$ in Table \ref{table:1} we have
\begin{align*}
\|\mathbf{M}_A[\Lambda_{k n}]^\star Z(\lambda)\|_F =
\|\mathbf{M}_A[\Lambda_{k n}^T\mathbf{M}_{A^{-1}}[Z]]\|_F\leq & \min\{\sqrt{k+1},\sqrt{t+1}\}\|\mathbf{M}_{A^{-1}}[Z]\|_F \\
 \leq &\min \{\sqrt{k+1},\sqrt{t+1}\}\|Z\|_F.
\end{align*}
With this observation, bounding all the terms of the right-hand-side of \eqref{eq:longDeltaP} is elementary but rather long, so we invite the reader to complete the proof.
\end{proof}

Finally, we are at the position of stating and proving the main result of this section, namely, the perturbation of structured block Kronecker pencils result in Theorem \ref{thm:back_error_main_theorem}.
\begin{theorem}\label{thm:back_error_main_theorem}
Let $P(\lambda) = \sum_{i=0}^g P_i \lambda^i \in \mathbb{F}[\lambda]^{n\times n}$ be a structured ((skew)-symmetric, (anti)-palindromic, or alternating) matrix polynomial and let $\mathcal{L}(\lambda)$ be a structured block Kronecker pencil as in \eqref{eq:structured-Kron-pencil}, where the matrix $A$ is one of the matrices in Table \ref{table:1} and it is chosen to guarantee that $\mathscr{S}(P)=\mathscr{S}(\mathcal{L})$, with $g = 2k + 1$ and such that
$P(\lambda) = (\mathbf{M}_{A}[\Lambda_k](\lambda)^\star\otimes I_n)(\lambda M_1+M_0)(\Lambda_k(\lambda)\otimes I_n)$, where $\lambda M_1 + M_0$ is the $(1,1)$-block in the natural partition of $\mathcal{L}(\lambda)$ and $\Lambda_k (\lambda)$ is the vector polynomial in \eqref{eq:Lambda}.
 If $\Delta \mathcal{L}(\lambda)$ is any pencil with the same size and structure as $\mathcal{L}(\lambda)$ and such that
\begin{equation} \label{eq:Lfinalbound}
\|\Delta \mathcal{L}(\lambda)\|_F < \left(\frac{\pi}{16} \right)^2 \, \frac{1}{(k+1)^{5/2}} \, \frac{1}{1 + \|\lambda M_1 + M_0\|_F},
\end{equation}
then $\mathcal{L}(\lambda) + \Delta \mathcal{L}(\lambda)$ is a strong linearization of a matrix polynomial $P(\lambda) + \Delta P(\lambda)$ with grade $g$ and such that
\[
\frac{\|\Delta P(\lambda)\|_F}{\|P(\lambda)\|_F} \leq 68\,  (k+1)^{5/2} \frac{\|\mathcal{L}(\lambda) \|_F}{\|P(\lambda)\|_F} \, (1+ \|\lambda M_1 + M_0\|_F + \|\lambda M_1 + M_0\|_F^2) \,
\frac{\|\Delta \mathcal{L}(\lambda) \|_F}{\|\mathcal{L}(\lambda) \|_F} \, ,
\]
and $\mathscr{S}(\Delta P)=\mathscr{S}(P)$.
In addition, the right minimal indices of $\mathcal{L}(\lambda) + \Delta \mathcal{L}(\lambda)$ are those of $P(\lambda) + \Delta P(\lambda)$ shifted by $k$, and the left minimal indices of $\mathcal{L}(\lambda) + \Delta \mathcal{L}(\lambda)$ are those of $P(\lambda) + \Delta P(\lambda)$ shifted also by $k$.
\end{theorem}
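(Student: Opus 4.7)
The plan is to execute the three-step program sketched immediately after the theorem statement, relying on Theorem~\ref{thm:finalofstep1}, Theorem~\ref{thm:corfinalofstep2}, Lemma~\ref{lemma:FIN}, and Theorem~\ref{thm:structured-block_Kronecker_pencils} as the main ingredients. The technical work consists essentially of verifying that the single hypothesis \eqref{eq:Lfinalbound} is strong enough to trigger each of these results in turn, and then assembling the individual bounds into the final inequality with the constant $68$.

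First, I observe that \eqref{eq:Lfinalbound} is strictly stronger than \eqref{eq:boundL1}, since $(k+1)^{5/2} \geq k^2$. Hence Theorem~\ref{thm:finalofstep1} applies and produces a constant matrix $X$ implementing the $\star$-congruence \eqref{eq:back_to_zero}, together with the bound \eqref{eq:boundL12tilde} on $\|\Delta\widetilde{\mathcal{L}}_{21}(\lambda)\|_F$. Because the transformation in \eqref{eq:back_to_zero} is an $\star$-congruence, Proposition~\ref{prop:congruence} guarantees that the transformed pencil $\mathcal{L}(\lambda)+\Delta\widetilde{\mathcal{L}}(\lambda)$ remains $\mathbf{M}_A$-structured, preserving the original structure. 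The next task is to check that \eqref{eq:boundL12tilde} combined with \eqref{eq:Lfinalbound} yields $\|\Delta\widetilde{\mathcal{L}}_{21}(\lambda)\|_F < \pi/(12(k+1)^{3/2})$, so that Theorem~\ref{thm:corfinalofstep2} can be invoked; this is a short calculation, since the factor $1/(1+\|\lambda M_1+M_0\|_F)$ in \eqref{eq:Lfinalbound} absorbs the $\|\lambda M_1+M_0\|_F$ term appearing in \eqref{eq:boundL12tilde}, and the factor $(k+1)^{5/2}$ in the denominator leaves room for the $(k+1)^{3/2}$ threshold.

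Once Theorem~\ref{thm:corfinalofstep2} applies, we obtain that $\mathcal{L}(\lambda)+\Delta\widetilde{\mathcal{L}}(\lambda)$ is an $\mathbf{M}_A$-structured strong block minimal bases pencil and that there exists a matrix polynomial $\Delta R_k(\lambda)$ of grade $k$ with the bound \eqref{eq:bound_dual}, in particular $\|\Delta R_k(\lambda)\|_F<1/\sqrt{2}$. Combining this with Theorem~\ref{thm:structured-block_Kronecker_pencils} (applied to the perturbed pencil, whose $(1,1)$ block is $\lambda M_1+M_0+\Delta\mathcal{L}_{11}(\lambda)$ and whose $(2,1)$ block defines the perturbed dual minimal basis $\Lambda_k(\lambda)\otimes I_n+\Delta R_k(\lambda)$) immediately yields that $\mathcal{L}(\lambda)+\Delta\mathcal{L}(\lambda)$ is a strong linearization of the $\mathbf{M}_A$-structured matrix polynomial $P(\lambda)+\Delta P(\lambda)$ defined by \eqref{eq:poly_perturbed}, with grade $g=2k+1$, and that the right and left minimal indices of the linearization are those of $P(\lambda)+\Delta P(\lambda)$ shifted by $k$. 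Since $P(\lambda)+\Delta P(\lambda)$ is $\mathbf{M}_A$-structured and $P(\lambda)$ is $\mathbf{M}_A$-structured, their difference $\Delta P(\lambda)$ is $\mathbf{M}_A$-structured, giving $\mathscr{S}(\Delta P)=\mathscr{S}(P)$.

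Finally, since $\|\Delta R_k(\lambda)\|_F<1/\sqrt{2}$, Lemma~\ref{lemma:FIN} gives
\[
\|\Delta P(\lambda)\|_F\leq \sqrt{k+1}\bigl(5\|\Delta\mathcal{L}_{11}(\lambda)\|_F+4\|\lambda M_1+M_0\|_F\|\Delta R_k(\lambda)\|_F\bigr).
\]
Bounding $\|\Delta\mathcal{L}_{11}(\lambda)\|_F\leq\|\Delta\mathcal{L}(\lambda)\|_F$, and using \eqref{eq:bound_dual} together with the estimate \eqref{eq:boundL12tilde} on $\|\Delta\widetilde{\mathcal{L}}_{21}(\lambda)\|_F$, the term $\|\Delta R_k(\lambda)\|_F$ becomes bounded by a constant multiple of $(k+1)\|\Delta\mathcal{L}(\lambda)\|_F(1+\|\lambda M_1+M_0\|_F)$. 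Dividing by $\|P(\lambda)\|_F$ and multiplying and dividing by $\|\mathcal{L}(\lambda)\|_F$ on the right yields the claimed inequality; the constant $68$ arises from tracking the factors $5$, $4$, $6\sqrt{2}/\pi$, and the rescaled denominator $1-3k\|\Delta\mathcal{L}(\lambda)\|_F$, which is bounded below by a universal positive constant thanks to \eqref{eq:Lfinalbound}. The main obstacle is purely bookkeeping: one must keep the dependences on $k$, on $\|\lambda M_1+M_0\|_F$, and on $\|\Delta\mathcal{L}(\lambda)\|_F$ disentangled while combining \eqref{eq:easyboundCD}, \eqref{eq:boundL12tilde}, \eqref{eq:bound_dual}, and Lemma~\ref{lemma:FIN} to produce both the $(k+1)^{5/2}$ factor and the polynomial $1+\|\lambda M_1+M_0\|_F+\|\lambda M_1+M_0\|_F^2$ in the final constant, and to verify that \eqref{eq:Lfinalbound} comfortably triggers every intermediate hypothesis on $k$ and on $\|\lambda M_1+M_0\|_F$.
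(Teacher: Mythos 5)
Your proposal follows essentially the same three-step route as the paper: trigger Theorem~\ref{thm:finalofstep1} from \eqref{eq:Lfinalbound}, verify the threshold for Theorem~\ref{thm:corfinalofstep2} using \eqref{eq:boundL12tilde}, and close with Lemma~\ref{lemma:FIN} and bookkeeping, which is exactly how the paper proves it. One small citation slip: the perturbed pencil is no longer a block Kronecker pencil (its $(2,1)$ block is $L_k(\lambda)\otimes I_n+\Delta\widetilde{\mathcal{L}}_{21}(\lambda)$ rather than $L_k(\lambda)\otimes I_n$), so the relevant result is Theorem~\ref{thm:structured-minimal-basis-pencil}, not Theorem~\ref{thm:structured-block_Kronecker_pencils}; also the intermediate bound on $\|\Delta R_k(\lambda)\|_F$ scales like $(k+1)^2\|\Delta\mathcal{L}(\lambda)\|_F(1+\|\lambda M_1+M_0\|_F)$, not $(k+1)$, which is what produces the $(k+1)^{5/2}$ factor once combined with the $\sqrt{k+1}$ from Lemma~\ref{lemma:FIN}.
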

\begin{proof}
Notice that the condition \eqref{eq:Lfinalbound} implies \eqref{eq:boundL1}, so we can apply Theorem \ref{thm:finalofstep1} to prove that the pencil $\mathcal{L}(\lambda)+\Delta \mathcal{L}(\lambda)$ is $\star$-congruent to the pencil $\mathcal{L}(\lambda)+\Delta \widetilde{\mathcal{L}}(\lambda)$ in \eqref{eq:back_to_zero}.
Since $\star$-congruences are strict equivalences, both pencils have the same complete eigenstructures.
By using \eqref{eq:boundL12tilde} together with $3k \|\Delta \mathcal{L}(\lambda)\|_F < 1/2$, which is implied by  \eqref{eq:Lfinalbound}, we get
the following upper bound
\begin{align} \label{eq:aux1boundfinal}
\|\Delta \widetilde{\mathcal{L}}_{21}(\lambda)\|_{F} 
&\leq \|\Delta \mathcal{L}(\lambda)\|_F \left(2 + 6 \, k \, \|\lambda M_1 + M_0\|_F  \right) \, \\
& \leq 6 \left(\frac{\pi}{16} \right)^2 \, \frac{1}{(k+1)^{3/2}} < \frac{\pi}{12}  \, \frac{1}{(k+1)^{3/2}} \,. \nonumber
\end{align}
The above upper bound allows us to apply Theorem \ref{thm:corfinalofstep2} to the pencil $\mathcal{L}(\lambda)+\Delta \widetilde{\mathcal{L}}(\lambda)$.
Thus, the pencil $\mathcal{L}(\lambda)+\Delta \widetilde{\mathcal{L}}(\lambda)$ is an $\mathbf{M}_A$-structured strong block minimal bases pencil, which, by Theorem \ref{thm:structured-minimal-basis-pencil}, is a strong linearization of the matrix polynomial $P(\lambda)+\Delta P(\lambda)$ in \eqref{eq:poly_perturbed} with $\mathscr{S}(\Delta P)=\mathscr{S}(P)$.
Furthermore, Theorem \ref{thm:structured-minimal-basis-pencil} also implies that the right and left minimal indices of  $\mathcal{L}(\lambda)+\Delta \widetilde{\mathcal{L}}(\lambda)$ and, since they are strictly equivalent, the ones of $\mathcal{L}(\lambda)+\Delta \mathcal{L}(\lambda)$, are those of $P(\lambda)+\Delta P(\lambda)$ shifted by $k$. 
It only remains to obtain the upper bound for $\|\Delta P(\lambda)\|_F/\|P(\lambda)\|_F$. 
But this follows from combining Theorem \ref{thm:corfinalofstep2} with \eqref{eq:aux1boundfinal} to obtain 
\[
\|\Delta R_k (\lambda) \|_F = \|\mathbf{M}_{A}[\Delta R_k](\lambda) \|_F  \leq 17 \, (k+1)^2 \,
\|\Delta \mathcal{L}(\lambda)\|_F \left(1 +  \|\lambda M_1 + M_0\|_F  \right),
\]
and, then, combining the above upper bound with Lemma \ref{lemma:FIN}.
\end{proof}

Recall that our main goal is to study whether solving a  SPEP or a SCPE  applying a structurally backward stable algorithm (like the palindromic-QR or the structured staircase algorithm) to a structured  block Kronecker pencil is structurally global backward stable from the point of view of the polynomial or not.
In view of Theorem \ref{thm:back_error_main_theorem}, the structured backward stability is guaranteed when the constant
\begin{equation}  \label{eq:C_pl}
C_{P,\mathcal{L}} := 68\,  (k+1)^{5/2} \frac{\|\mathcal{L}(\lambda) \|_F}{\|P(\lambda)\|_F} \, (1+ \|\lambda M_1 + M_0\|_F + \|\lambda M_1 + M_0\|_F^2)
\end{equation}
is a moderate number.
To help us to study the size of \eqref{eq:C_pl}, we present Lemma \ref{lemma:quotient}.
Although Lemma \ref{lemma:quotient} is similar to \cite[Lemma 6.24]{minimal_pencils}, we reprove it in a simpler way that is more adequate in our structured setting.
\begin{lemma} \label{lemma:quotient} Let $P(\lambda) = \sum_{i=0}^g P_i \lambda^i \in \mathbb{F}[\lambda]^{n\times n}$ be a structured matrix polynomial and let $\mathcal{L}(\lambda)$ be a structured block Kronecker pencil as in \eqref{eq:structured-Kron-pencil}, where the matrix $A$ is one of the matrices in Table \ref{table:1} and it is chosen to guarantee that $\mathscr{S}(P)=\mathscr{S}(\mathcal{L})$, with $g = 2k + 1$ and such that
$P(\lambda) = (\mathbf{M}_{A}[\Lambda_k](\lambda)^\star\otimes I_n)(\lambda M_1+M_0)(\Lambda_k(\lambda)\otimes I_n)$.
 Then:
\begin{enumerate}
\item[\rm (a)] $\displaystyle \frac{\| \mathcal{L}(\lambda) \|_F}{\|P(\lambda)\|_F} = \sqrt{\displaystyle \left( \frac{\|\lambda M_1 + M_0\|_F}{\|P(\lambda)\|_F }\right)^2 +
    \frac{4nk}{\|P(\lambda)\|_F^2}} \geq \frac{1}{\sqrt{2\, (k+1)} }$.
\item[\rm (b)] $\displaystyle \|\lambda M_1 + M_0\|_F \, \geq \, \|P (\lambda) \|_F /\sqrt{2\, (k+1)}$.
\end{enumerate}
\end{lemma}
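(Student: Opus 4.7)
The plan is to prove part (b) first by chaining the submultiplicative-like bounds of Lemma \ref{lemma:normsproducts}, and then to deduce part (a) from part (b) together with a direct Frobenius-norm computation on the natural partition of $\mathcal{L}(\lambda)$.

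For part (b), I would start from the factorisation
\[
P(\lambda) = \bigl(\mathbf{M}_A[\Lambda_k](\lambda)^\star \otimes I_n\bigr)\, M(\lambda)\, \bigl(\Lambda_k(\lambda) \otimes I_n\bigr).
\]
Applying Lemma \ref{lemma:normsproducts}(d) to the rightmost product, with $M(\lambda)$ of grade $1$, yields
$\|M(\lambda)(\Lambda_k(\lambda) \otimes I_n)\|_F \leq \sqrt{2}\,\|M(\lambda)\|_F$. Next, I would apply the analogue of Lemma \ref{lemma:normsproducts}(e) with $\Lambda_k(\lambda)^T \otimes I_n$ replaced by $\mathbf{M}_A[\Lambda_k](\lambda)^\star \otimes I_n$: for every $A$ in Table \ref{table:1}, the row vector polynomial $\mathbf{M}_A[\Lambda_k](\lambda)^\star$ differs from $\Lambda_k(\lambda)^T$ only by a permutation of its monomial entries and by sign changes of modulus one, so the proof of part (e) applies verbatim and produces the factor $\sqrt{k+1}$ in front of the Frobenius norm of the middle polynomial (whose grade is $k+1$). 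Chaining the two estimates gives
$\|P(\lambda)\|_F \leq \sqrt{2(k+1)}\,\|M(\lambda)\|_F$, which is exactly part (b).

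For the equality in part (a), I would compute $\|\mathcal{L}(\lambda)\|_F^2$ block by block from the natural partition \eqref{eq:structured-Kron-pencil}. The $(1,1)$ block contributes $\|\lambda M_1 + M_0\|_F^2$, the $(2,2)$ block is zero, and the off-diagonal blocks contribute $\|L_k(\lambda) \otimes I_n\|_F^2 + \|\mathbf{M}_A[L_k](\lambda) \otimes I_n\|_F^2$. Inspecting Table \ref{table:2}, for every $A$ in Table \ref{table:1} the pencil $\mathbf{M}_A[L_k](\lambda)$ is obtained from $L_k(\lambda)$ by an overall sign change, a grade-$1$ reversal, or the substitution $\lambda \mapsto -\lambda$, so in every case $\|\mathbf{M}_A[L_k](\lambda)\|_F^2 = \|L_k(\lambda)\|_F^2 = 2k$. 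Tensoring with $I_n$ multiplies each of these by $n$, so the off-diagonal blocks together contribute $4kn$, yielding $\|\mathcal{L}(\lambda)\|_F^2 = \|\lambda M_1 + M_0\|_F^2 + 4kn$ and establishing the displayed equality of part (a). The final inequality of part (a) follows at once by dropping the nonnegative summand $4kn/\|P(\lambda)\|_F^2$ under the square root and invoking part (b) in the form $\bigl(\|\lambda M_1 + M_0\|_F/\|P(\lambda)\|_F\bigr)^2 \geq 1/(2(k+1))$.

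The only mildly delicate point is the extension of Lemma \ref{lemma:normsproducts}(e) from $\Lambda_k(\lambda)^T \otimes I_n$ to $\mathbf{M}_A[\Lambda_k](\lambda)^\star \otimes I_n$; but since the admissible matrices $A$ of Table \ref{table:1} produce only monomial reorderings and unit-modulus sign changes, this extension requires no new argument, and the rest of the proof is routine block-matrix bookkeeping.
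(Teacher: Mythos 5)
Your proposal is correct and follows essentially the same route as the paper: the equality in (a) comes from a direct Frobenius-norm computation on the natural $2\times 2$ block partition of $\mathcal{L}(\lambda)$, part (b) is obtained by chaining Lemma \ref{lemma:normsproducts}(d) with the $\mathbf{M}_A$-analogue of part (e) applied to the factorisation $P=(\mathbf{M}_A[\Lambda_k]^\star\otimes I_n)M(\Lambda_k\otimes I_n)$, and the inequality in (a) then follows from (b). The only cosmetic difference is your justification of the $\sqrt{k+1}$ factor (observing that $\mathbf{M}_A[\Lambda_k]^\star$ is $\Lambda_k^T$ up to a monomial permutation and unit signs), whereas the paper pushes the bound through the M\"obius transformation via $\|\mathbf{M}_A[\Lambda_{kn}^T\mathbf{M}_{A^{-1}}[Z]]\|_F$; both are valid and equally short.
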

\begin{proof}
The equality in part (a) follows directly from the structure of $\mathcal{L}(\lambda)$ in \eqref{eq:structured-Kron-pencil}. On the other hand Lemma \ref{lemma:normsproducts} and the related property $\|(\mathbf{M}_A[\Lambda_k](\lambda)^\star\otimes I_n)Z(\lambda)\|_F\leq \min\{\sqrt{k+1},\sqrt{t+1}\}\|Z(\lambda)\|_F$ for any matrix polynomial $Z(\lambda)$ of grade $t$, that we have already used in the proof of Lemma \ref{lemma:FIN}, yield $\|P(\lambda)\|_F\leq \sqrt{k+1}\|(\lambda M_1+M_0)(\Lambda_k(\lambda)\otimes I_n)\|_F\leq \sqrt{2(k+1)}\|\lambda M_1+M_0\|_F$.
This proves (b), which implies the inequality in (a).
\end{proof}

The consequences of Theorem \ref{thm:back_error_main_theorem} and Lemma \ref{lemma:quotient} are the same as the consequences of \cite[Lemma 6.24]{minimal_pencils} and \cite[Theorems 6.22 and 6.23]{minimal_pencils} in the backward error analysis in \cite{minimal_pencils}.
If $\|P(\lambda)\|_F \ll 1$, then $C_{P,\mathcal{L}}$ is huge, since $4kn/\|P(\lambda)\|_F^2$ is huge. 
Moreover, from \eqref{eq:C_pl} and part-(b) in Lemma \ref{lemma:quotient}, we see that if $\|P(\lambda)\|_F \gg 1$, then $C_{P,\mathcal{L}}$ is also huge, since $\|\lambda M_1 + M_0 \|_F$ is huge and $\|\mathcal{L}(\lambda) \|_F/\|P(\lambda)\|_F \geq 1/\sqrt{2\, (k+1)}$. 
Therefore, it is necessary to scale $P(\lambda)$ in advance in such a way that $\|P(\lambda)\|_F = 1$ to have a chance $C_{P,\mathcal{L}}$ moderate.
However,  even in this case, $C_{P,\mathcal{L}}$ is large if $\|\lambda M_1 + M_0\|_F$ is large.
Therefore, to guarantee that $C_{P,\mathcal{L}}$ is a moderate number, in addition to scale $P(\lambda)$, one has to consider only structured block Kronecker pencils with $\|\lambda M_1+M_0\|_F\approx \|P(\lambda)\|_F$.

As a consequence of the discussion in the previous paragraph, we finally state the informal Corollary \ref{cor:FINperturbation}, which establishes sufficient conditions for the structurally backward stability  of the solution of SPEPs and SCPEs via structured block Kronecker pencils.
As it was done in its unstructured version \cite[Corollary 6.25]{minimal_pencils}, for the sake of clarity and simplicity any nonessential numerical constant is omitted.
\begin{corollary}\label{cor:FINperturbation}
Let $P(\lambda) = \sum_{i=0}^g P_i \lambda^i \in \mathbb{F}[\lambda]^{n\times n}$ be a structured matrix polynomial with $\|P(\lambda)\|_F = 1$.
 Let $\mathcal{L}(\lambda)$ be a structured block Kronecker pencil as in \eqref{eq:structured-Kron-pencil}, where the matrix $A$ is one of the matrices in Table \ref{table:1} and it is chosen to guarantee that $\mathscr{S}(P)=\mathscr{S}(\mathcal{L})$, with $g = 2k + 1$ and such that
$P(\lambda) = (\mathbf{M}_{A}[\Lambda_k](\lambda)^\star\otimes I_n)(\lambda M_1+M_0)(\Lambda_k(\lambda)\otimes I_n)$, where $\lambda M_1+M_0$ is the (1,1) block of $\mathcal{L}(\lambda)$. 
Let $\Delta \mathcal{L}(\lambda)$ be any pencil with the same size and structure as $\mathcal{L}(\lambda)$ and with $\|\Delta \mathcal{L}(\lambda)\|_F$ sufficiently small. 
If $\|\lambda M_1 + M_0\|_F \approx \|P(\lambda)\|_F$, then $\mathcal{L}(\lambda) + \Delta \mathcal{L}(\lambda)$ is a strong linearization of a matrix polynomial $P(\lambda) + \Delta P(\lambda)$ with grade $g$ and such that
\begin{equation} \label{eq:informalcor}
\frac{\|\Delta P(\lambda)\|_F}{\|P(\lambda)\|_F} \lesssim \,  (k+1)^{3} \,\sqrt{n} \, \,
\frac{\|\Delta \mathcal{L}(\lambda) \|_F}{\|\mathcal{L}(\lambda) \|_F} \quad \mbox{with}  \quad \mathscr{S}(\Delta P)=\mathscr{S}(P).
\end{equation}
In addition, the right minimal indices of $\mathcal{L}(\lambda) + \Delta \mathcal{L}(\lambda)$ are those of $P(\lambda) + \Delta P(\lambda)$ shifted by $k$, and the left minimal indices of $\mathcal{L}(\lambda) + \Delta \mathcal{L}(\lambda)$ are those of $P(\lambda) + \Delta P(\lambda)$ also shifted by $k$. 
\end{corollary}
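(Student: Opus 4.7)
The plan is to derive the informal bound \eqref{eq:informalcor} directly from the precise bound in Theorem \ref{thm:back_error_main_theorem} by plugging in the simplifying hypotheses $\|P(\lambda)\|_F = 1$ and $\|\lambda M_1 + M_0\|_F \approx \|P(\lambda)\|_F$. The requirement that $\|\Delta \mathcal{L}(\lambda)\|_F$ be ``sufficiently small'' is understood to mean small enough so that the upper bound \eqref{eq:Lfinalbound} of Theorem \ref{thm:back_error_main_theorem} is satisfied; once this holds, the conclusions about $\mathcal{L}(\lambda)+\Delta\mathcal{L}(\lambda)$ being a strong linearization of some $P(\lambda)+\Delta P(\lambda)$ of grade $g$, the equality $\mathscr{S}(\Delta P)=\mathscr{S}(P)$, and the shift relation between the minimal indices of the pencil and those of the polynomial are all immediate corollaries of Theorem \ref{thm:back_error_main_theorem}; they do not need to be re-proved here.

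The only quantitative work is to estimate the constant $C_{P,\mathcal{L}}$ of \eqref{eq:C_pl} under the running hypotheses. First, since $\|\lambda M_1 + M_0\|_F \approx \|P(\lambda)\|_F = 1$, the factor $(1 + \|\lambda M_1 + M_0\|_F + \|\lambda M_1 + M_0\|_F^2)$ appearing in the definition of $C_{P,\mathcal{L}}$ is $O(1)$ and can be absorbed into the unspecified constants hidden by the symbol $\lesssim$. Next, I would apply part (a) of Lemma \ref{lemma:quotient}, which gives the exact identity
\[
\frac{\|\mathcal{L}(\lambda)\|_F}{\|P(\lambda)\|_F} = \sqrt{\left(\frac{\|\lambda M_1 + M_0\|_F}{\|P(\lambda)\|_F}\right)^2 + \frac{4nk}{\|P(\lambda)\|_F^2}}.
\]
Using $\|P(\lambda)\|_F = 1$ and $\|\lambda M_1 + M_0\|_F \approx 1$, the right-hand side is bounded by a constant multiple of $\sqrt{1 + 4nk} \lesssim \sqrt{n(k+1)}$.

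Assembling these estimates into the explicit bound of Theorem \ref{thm:back_error_main_theorem},
\[
\frac{\|\Delta P(\lambda)\|_F}{\|P(\lambda)\|_F} \;\lesssim\; (k+1)^{5/2} \cdot \sqrt{n(k+1)} \cdot \frac{\|\Delta \mathcal{L}(\lambda)\|_F}{\|\mathcal{L}(\lambda)\|_F} \;=\; (k+1)^{3}\sqrt{n}\,\frac{\|\Delta \mathcal{L}(\lambda)\|_F}{\|\mathcal{L}(\lambda)\|_F},
\]
which is exactly \eqref{eq:informalcor}. Since this is a purely algebraic manipulation of already-established inequalities, there is no real obstacle; the only potential awkwardness is interpreting the informal symbols $\approx$ and $\lesssim$, which I would handle by simply declaring at the outset that they hide absolute constants independent of $P(\lambda)$, $\mathcal{L}(\lambda)$, $k$, and $n$, so the reader understands that the statement is meant as a qualitative summary of Theorem \ref{thm:back_error_main_theorem} rather than a sharp quantitative bound.
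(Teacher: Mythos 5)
Your proof is correct and follows essentially the same route as the paper. The paper does not give a formal proof of Corollary \ref{cor:FINperturbation} but presents it as an ``informal'' consequence of the discussion surrounding $C_{P,\mathcal{L}}$ in \eqref{eq:C_pl}, Theorem \ref{thm:back_error_main_theorem}, and Lemma \ref{lemma:quotient}; your plan makes this reasoning explicit, using the identity in Lemma \ref{lemma:quotient}(a) to get $\|\mathcal{L}\|_F/\|P\|_F \lesssim \sqrt{n(k+1)}$ under the scaling hypotheses and multiplying by the $(k+1)^{5/2}$ factor of Theorem \ref{thm:back_error_main_theorem}, exactly as the paper intends.
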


\begin{remark}\label{remark:FIN}
  We emphasize that Corollary \ref{cor:FINperturbation} can be applied in particular to the non-permuted block-tridiagonal and block-antitridiagonal structure preserving strong linearizations in \cite{Greeks2,Alternating,Palindromic,Skew} (see also Example \ref{ex:tridiagonal} in this paper), since the Frobenius norm is invariant under permutations, permutations preserve strong linearizations and minimal indices, and the structure of the pencils are preserved under $\star$-congruence.
  Therefore, given one of these block-tridiagonal or block-antitridiagonal structure preserving strong linearization and a perturbation of it with the same structure, we can permute both and transform the corresponding perturbation problem into the problem we have solved in this section.
\end{remark}

\begin{remark}
Notice the following rather surprising result.
The constant \eqref{eq:C_pl}, which shows whether or not  solving SPEPs or SCPEs applying a structured backward stable algorithm to a structured block Kronecker pencil is structurally global backward stable, is the same constant (except by the minor change of replacing $g$ by $k+1$) that shows whether or not the (unstructured) backward stability of solving PEPs and CPEs applying a backward stable algorithm (like the QZ algorithm of the staircase algorithm) to a block Kronecker pencil holds (see \cite[Section 6.3]{minimal_pencils}).
\end{remark}

\section{Conclusions}\label{sec:conclusions}

The numerical solution of a structured polynomial eigenvalue problem  is usually performed by embedding the associated structured matrix polynomial into a matrix pencil with the same structure, called a structure-preserving linearization, and then applying well-established algorithms for structured matrix pencils, like the  palindromic-QR algorithm or the structured  versions of the staircase algorithm,  to the linearization.
This approach guarantees that the computed complete eigenstructure is the exact one of a nearby matrix pencil with the same structure as the original matrix polynomial.
However, it has remained an open problem to determine whether or not the computed eigenstructure is the exact one of a nearby structured matrix polynomial.
In this paper, we have solved this problem for a large family of structure-preserving linearizations, i.e., the family of structured block Kronecker linearizations. 
More precisely, we have performed for the first time a rigorous global and structured backward error analysis of  structured complete polynomial eigenvalue problems solved by using structured block Kronecker linearizations, when the associated matrix polynomial has odd degree and is (skew-)symmetric, (anti-)palindromic or alternating.
In order to perform our analysis  for the six  considered structures in an unified way, we have introduced the formalism of $\mathbf{M}_A$-structured matrix polynomials, and the families of $\mathbf{M}_A$-structured strong block minimal bases pencils  and  $\mathbf{M}_A$-structured block Kronecker pencils, which contains as a particular subclass the family of structured Kronecker pencils.
This analysis has allowed us to identify a huge family of structure-preserving linearizations that yield  perfect structured polynomial backward stability in the solution of structured complete polynomial eigenvalue problems.
In particular, this family contains the famous block-tridiagonal and block-antitridiagonal structure preserving strong linearizations presented in \cite{Greeks2,PalindromicFiedler,Alternating,Palindromic,Skew} and the symmetric and skew-symmetric strong linearizations in \cite{PartI}.

\appendix

\section{Proof of Lemma \ref{lemma:min_singular_val}}\label{appendix:proof}
The goal of this appendix is the computation of the minimum singular value of the matrix
\[
T_A = \left[
      \begin{array}{c|c}
        (bF_{kn}-dE_{kn})\otimes I_{kn} &-I_{kn}\otimes E_{kn}\\\hline
        (aF_{kn}-cE_{kn})\otimes I_{kn}&I_{kn}\otimes F_{kn}
      \end{array}
    \right],
\]
when the matrix $A=\left[\begin{smallmatrix} a & b \\ c & d \end{smallmatrix}\right]$ is equal to any of the following matrices
\begin{align*}
&A_1:=\begin{bmatrix}
1 & 0 \\ 0 & 1 
\end{bmatrix}, \,\,
A_2:=\begin{bmatrix}
-1 & 0 \\ 0 & -1 
\end{bmatrix},\,\,
A_3:=\begin{bmatrix}
0 & 1 \\ 1 & 0
\end{bmatrix},\,\,
A_4:=\begin{bmatrix}
0 & -1 \\ -1 & 0
\end{bmatrix}, \\
&A_5:=\begin{bmatrix}
-1 & 0 \\ 0 & 1
\end{bmatrix},\,\, \mbox{and} \,\,
A_6:=\begin{bmatrix}
1 & 0 \\ 0 & -1
\end{bmatrix}.
\end{align*}

We start by reducing the problem of computing the minimum singular value of $T_A$ to the problem of computing the minimum singular value of a matrix $\widehat{T}_A$ with a size much smaller than the size of $T_A$.
First, notice that we may write the matrix $T_A$ as
  \begin{align*}
  \begin{split}
    T_A=&
    \left[
      \begin{array}{c|c}
        (bF_{kn}-dE_{kn})\otimes I_{k}\otimes I_{n} &-I_{kn}\otimes E_{k}\otimes I_{n}\\\hline
        (aF_{kn}-cE_{kn})\otimes I_{k}\otimes I_{n}&I_{kn}\otimes F_{k}\otimes I_{n}
      \end{array}
    \right]\\
     =&
    \left[
      \begin{array}{c|c}
        (bF_{kn}-dE_{kn})\otimes I_k & -I_{kn}\otimes E_{k}\\\hline
         (aF_{kn}-cE_{kn})\otimes I_{k}&I_{kn}\otimes F_{k}
      \end{array}
    \right]\otimes I_{n}=:\widetilde{T}_A\otimes I_{n}.
    \end{split}
  \end{align*}
Thus, we obtain $\sigma_{\mathrm{min}}(T_A) = \sigma_{\mathrm{min}}(\widetilde{T}_A)$.
Then, we perform a perfect shuffle permutation on each block of the matrix $\widetilde{T}_A$ to swap the order of the Kronecker products.
In other words, there exist permutation matrices $S$, $R_1$, and $R_2$ (see, for example, \cite{VanLoan}) such that 
\begin{align*}
    &\left[
      \begin{array}{c|c}
        S&\\\hline
        &S
      \end{array}
    \right]
    \left[
\begin{array}{c|c}
        (bF_{kn}-dE_{kn})\otimes I_k & -I_{kn}\otimes E_{k}\\\hline
         (aF_{kn}-cE_{kn})\otimes I_{k}&I_{kn}\otimes F_{k}
      \end{array}
    \right]
    \left[
      \begin{array}{c|c}
        R_1^{T}&\\\hline
       \phantom{\Big{(}} &  R_2^{T}
      \end{array}
    \right]\\
    =&
 \left[
\begin{array}{c|c}
        I_k\otimes (bF_{kn}-dE_{kn}) & -E_{k}\otimes I_{kn}\\\hline
         I_{k}\otimes (aF_{kn}-cE_{kn}) & F_{k}\otimes I_{kn}
      \end{array}
    \right] \\
    =&
    \left[
      \begin{array}{c|c}
        I_k\otimes (bF_{k}-dE_{k}) & -E_{k}\otimes I_{k}\\\hline
         I_{k}\otimes (aF_{k}-cE_{k}) & F_{k}\otimes I_{k}
      \end{array}
    \right]\otimes I_{n}=:\widehat{T}_A\otimes I_{n}.
\end{align*}
Therefore, we obtain $\sigma_{\mathrm{min}}(T_A) = \sigma_{\mathrm{min}}(\widetilde{T}_A)=\sigma_{\mathrm{min}}(\widehat{T}_A)$.

We will denote by $\widehat{T}_{i}$ the matrix $\widehat{T}_A$ when $A=A_i$, for $i=1,\hdots,6$.
The rest of the proof consists in showing that the minimum singular value of the matrix $\widehat{T}_i$, for $i=1,\hdots,6$, is equal to the minimum singular value of the matrix
\begin{equation}\label{eq:That}
\widehat{T}:=
\begin{bmatrix}
I_k\otimes E_k & E_k \otimes I_k \\
I_k\otimes F_k & F_k \otimes I_k
\end{bmatrix},
\end{equation}
which, by \cite[Lemmas 6.4 and B.1]{minimal_pencils}, is equal to $2\sin(\pi/(4k))$.

First, notice the following equalities
\begin{align*}
\widehat{T}=&
\begin{bmatrix}
-I_{k^2} & 0 \\ 0 & I_{k^2}
\end{bmatrix}
\left[ \begin{array}{c|c}
-I_k\otimes E_k & -E_k \otimes I_k \\ \hline
I_k\otimes F_k & F_k \otimes I_k
\end{array}\right]  \\
=&\begin{bmatrix}
-I_{k^2} & 0 \\ 0 & I_{k^2}
\end{bmatrix}
\left[ \begin{array}{c|c}
I_k\otimes E_k & -E_k \otimes I_k \\ \hline
-I_k\otimes F_k & F_k \otimes I_k
\end{array}\right]
\begin{bmatrix}
-I_{k(k+1)} & 0 \\ 0 & I_{k(k+1)}
\end{bmatrix}.
\end{align*}
Thus, we immediately obtain $\sigma_{\mathrm{min}}(\widehat{T}_1)=\sigma_{\mathrm{min}}(\widehat{T}_2)=\sigma_{\mathrm{min}}(\widehat{T})=2\sin(\pi/(4k))$.
In addition, notice
\begin{align*}
&\left[ \begin{array}{c|c}
-I_k\otimes F_k & -E_k \otimes I_k \\ \hline
I_k\otimes E_k & F_k \otimes I_k
\end{array}\right] =
\left[ \begin{array}{c|c}
I_k\otimes F_k & -E_k \otimes I_k \\ \hline
-I_k\otimes E_k & F_k \otimes I_k
\end{array}\right]\begin{bmatrix}
-I_{k(k+1)} & 0 \\ 0 & I_{k(k+1)}
\end{bmatrix}, \quad \mbox{and} \\
&\left[ \begin{array}{c|c}
-I_k\otimes E_k & -E_k \otimes I_k \\ \hline
-I_k\otimes F_k & F_k \otimes I_k
\end{array}\right] =
\left[ \begin{array}{c|c}
I_k\otimes E_k & -E_k \otimes I_k \\ \hline
I_k\otimes F_k & F_k \otimes I_k
\end{array}\right]\begin{bmatrix}
-I_{k(k+1)} & 0 \\ 0 & I_{k(k+1)}
\end{bmatrix},
\end{align*}
so we obtain $\sigma_{\mathrm{min}}(\widehat{T}_3)=\sigma_{\mathrm{min}}(\widehat{T}_4)$ and $\sigma_{\mathrm{min}}(\widehat{T}_5)=\sigma_{\mathrm{min}}(\widehat{T}_6)$, and, therefore, we only need to compute $\sigma_{\mathrm{min}}(\widehat{T}_3)$ and $\sigma_{\mathrm{min}}(\widehat{T}_5)$.

Let us compute first the minimum singular value of $\widehat{T}_3$. 
Recall that the singular values of $\widehat{T}_3$ are equal to the square roots of the eigenvalues of the matrix $\widehat{T}_3\widehat{T}_3^T$.
The matrix $\widehat{T}_3\widehat{T}_3^T$ is equal to
\[
\widehat{T}_3\widehat{T}_3^T=\begin{bmatrix}
2I_{k^2} & -\widehat{W}_{k,k} \\ -\widehat{W}_{k,k}^T & 2I_{k^2}
\end{bmatrix} = 2I_{2k^2} -\begin{bmatrix}
0 & \widehat{W}_{k,k} \\ \widehat{W}_{k,k}^T &0
\end{bmatrix},
\]
where $\widehat{W}_{k,k}=I_k\otimes F_kE_k^T+E_kF_k^T\otimes I_k$. 
It is well known  that the eigenvalues of the matrix 
\[
\begin{bmatrix}
 0 & \widehat{W}_{k,k} \\ \widehat{W}_{k,k}^T & 0
\end{bmatrix}
\]
 are $\pm \sigma_1 (\widehat{W}_{k,k}), \ldots, \pm \sigma_{k^2} (\widehat{W}_{k,k})$, where   $\sigma_1 (\widehat{W}_{k,k}) \geq \cdots \geq \sigma_{k^2} (\widehat{W}_{k,k})$ are the singular values of $\widehat{W}_{k,k}$. 
Therefore, the eigenvalues of $\widehat{T}_3 \widehat{T}_3^T$ are $2 \pm \sigma_1 (\widehat{W}_{k,k}), \ldots, 2 \pm \sigma_{k^2} (\widehat{W}_{k,k})$, which implies \[
\sigma_{\mathrm{min}}(\widehat{T}_3)=\sqrt{2-\sigma_{\mathrm{max}}(\widehat{W}_{k,k})}.
\]
So we have to compute the largest singular value of the matrix $\widehat{W}_{k,k}$.
To this aim, let us denote by $R_k$ the $k\times k$ reverse identity matrix, i.e., the matrix
\[
R_k:=\begin{bmatrix}
& & 1 \\ & \iddots & \\ 1 
\end{bmatrix}\in\mathbb{R}^{k\times k}.
\]
Notice $R_kF_kE_k^TR_k=E_kF_k^T$.
Thus, we have $(I_k\otimes R_k)\widehat{W}_{k,k}(I_k\otimes R_k)= I_k\otimes E_kF_k^T+E_kF_k^T\otimes I_k=:W_{k,k}$.
From \cite[Proposition B4]{minimal_pencils} and the previous argument, we get $\sigma_{\mathrm{max}}(\widehat{W}_{k,k})=\sigma_{\mathrm{max}}(W_{k,k})=2\cos(\pi/(2k))$.
Therefore, using a simple trigonometric identity, we obtain 
\[
\sigma_{\mathrm{min}}(\widehat{T}_3)=\sqrt{2-2\cos\left(\frac{\pi}{2k}\right)} = 2\sin\left(\frac{\pi}{4k}\right),
\]
which is the desired result.

Let us compute, now, the minimum singular value of the matrix $\widehat{T}_5$. 
To this purpose, note that
\[
\begin{bmatrix}
-I_{k^2} & \\ & I_{k^2} 
\end{bmatrix}
\widehat{T}_5 =
\begin{bmatrix}
I_k\otimes E_k & E_k\otimes I_k \\
I_k\otimes (-F_k) & F_k\otimes I_k
\end{bmatrix} =: \widetilde{T}_5,
\]
and that $\widehat{T}_5$ and $\widetilde{T}_5$ have the same singular values.
If we define the diagonal matrices $S_k:=\diag ((-1)^0,(-1)^1,  \hdots,(-1)^{k-1})\in\mathbb{R}^{k\times k}$ and $S_{k+1}:= \diag(S_k,(-1)^k)\in\mathbb{R}^{(k+1)\times (k+1)}$, then $S_kE_kS_{k+1}=E_k$ and $S_kF_kS_{k+1}=-F_k$.
Therefore
\[
\begin{bmatrix}
I_k\otimes S_k \\ & I_k\otimes S_k
\end{bmatrix}
\widetilde{T}_5
\begin{bmatrix}
I_k\otimes S_{k+1}\\&I_{k+1}\otimes S_k
\end{bmatrix}=\widehat{T},
\]
where $\widehat{T}$ is the matrix in \eqref{eq:That}.
So the singular values of $\widehat{T}_5$ and $\widehat{T}$ coincide and $\sigma_{\mathrm{min}}(\widehat{T}_5)=\sigma_{\mathrm{min}}(\widehat{T})=2\sin(\pi/(4k))$, which completes the proof.

\end{document}